\documentclass[10pt, reqno]{amsart}
\usepackage[utf8]{inputenc}

\usepackage{style}
\newcommand{\chiDP}{\chi_f^{\mathrm{DP}}}

\usepackage[backend=biber, sorting=nyt, maxnames=100,backref=true, giveninits=true, sortcites=true]{biblatex}
\title[Fractional DP-colorings of $d$-degenerate locally sparse graphs]{Fractional DP-colorings of $d$-degenerate\\
locally sparse graphs}

\author{Abhishek Dhawan$^{\star}$}
\address{$^{\star}$Department of Mathematics, University of Illinois Urbana--Champaign.}
\email{adhawan2@illinois.edu}
\thanks{Abhishek Dhawan is partially supported by the NSF RTG grant DMS-1937241.}

\author{Huy Nguyen$^{\star}$}
\email{huyn4@illinois.edu}

\author{Rohan A. Rathi$^{\dagger}$}
\address{$^\dagger$Independent researcher.}
\email{rohanrathi135@gmail.com}

\date{}

\begin{document}

\begin{abstract}
    Bernshteyn, Kostochka, and Zhu (2020) introduced the notion of fractional DP-coloring, which generalizes both fractional coloring and fractional list coloring. Among several foundational results, they proved that every $d$-degenerate bipartite graph $G$ satisfies $\chiDP(G) \le (1 + o(1))\frac{d}{\log d}$, and that this bound is optimal---a stark contrast to ordinary fractional coloring. In this paper, we extend this upper bound to all $d$-degenerate triangle-free graphs, proving that $\chiDP(G) \le (4 + o(1))\frac{d}{\log d}$. This generalizes a recent result of Martinsson and Steiner (2025) for ordinary fractional coloring.

    We derive this result as a corollary of a more general upper bound concerning locally sparse graph orderings. Specifically, a $d$-degenerate graph $G$ is \emph{left $k$-locally-sparse} if it admits a degeneracy ordering in which, for every vertex $v$, the subgraph induced by its back-neighbors contains at most $k$ edges. We show that if a $d$-degenerate graph $G$ is left $\frac{d^2}{f}$-locally-sparse, then
    \[
        \chiDP(G) \le (8 + o(1))\frac{d}{\log f}.
    \]
    This immediately yields an identical upper bound on the ordinary fractional chromatic number $\chi_f(G)$, improving upon the leading constants of previously known bounds.
    
    Additionally, we establish the asymptotic sharpness of this result up to the leading constant. For any $1 \ll f \le d^2$, we construct $d$-degenerate graphs that are left $\frac{d^2}{f}$-locally-sparse and satisfy $\chi_f(G) \ge (1 - o(1))\frac{d}{\log f}$.
    
    Finally, as applications of our main theorem, we obtain improved upper bounds on the fractional DP-chromatic number of $d$-degenerate $K_{1,t,t}$-free graphs, as well as $K_{t,t,t}$-free graphs with maximum degree $\Delta$. Notably, these bounds improve upon existing results even in the setting of ordinary fractional coloring.
\end{abstract}

\maketitle

\sloppy

\section{Introduction}\label{section: intro}

\subsection{Background}\label{subsection: background}

All graphs considered in this work are finite, simple, and undirected. A \emph{coloring} of a graph $G = (V, E)$ is a mapping $\phi\,:\,V(G) \to \N$, which is \emph{proper} if $\phi(x) \neq \phi(y)$ for every edge $xy \in E(G)$. The \emph{chromatic number} $\chi(G)$ is the minimum size of a set $C \subseteq \N$ for which $G$ admits a proper coloring $\phi\,:\,V(G) \to C$. Bounding $\chi(G)$ across various graph classes is a central problem in graph theory. In this paper, we focus on the \emph{fractional chromatic number}, denoted $\chi_f(G)$, of $d$-degenerate graphs---graphs that admit a vertex ordering in which each vertex has at most $d$ back-neighbors. Here, we extend several classical bounds on graph coloring under local sparsity conditions to this setting.

For $k \in \mathbb{R}$, we say $G$ is \emph{$k$-locally-sparse} if for each $v \in V(G)$, the induced subgraph $G[N(v)]$ contains at most $\lfloor k \rfloor$ edges. (Allowing $k \in \mathbb{R}$ rather than $k \in \N$ avoids taking floors and simplifies our notation.) Molloy showed that local sparsity can be leveraged to refine the standard greedy bound $\chi(G) \le \Delta(G) + 1$:

\begin{theorem}[{Molloy~\cite{molloy1997bound}}]\label{theo:MR}
    For every $\delta > 0$, there exists $\eps > 0$ such that for all sufficiently large $\Delta \in \N$ (in terms of $\delta$ and $\eps$), if $G$ is a $(1-\delta)\binom{\Delta}{2}$-locally-sparse graph of maximum degree $\Delta$, then $\chi(G) \le (1 - \eps)(\Delta + 1)$.
\end{theorem}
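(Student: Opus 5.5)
The plan is the standard naive coloring procedure (a one-round "wasteful coloring"), followed by a greedy completion. Fix $\delta>0$ and set $C=\Delta+1$. Each vertex $v$ independently becomes \emph{active} with probability $p$, a small constant depending on $\delta$. Each active vertex chooses a uniformly random color from $[C]$. A vertex keeps its color if no neighbor picked the same color; otherwise it is uncolored.

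The aim is to show that, with positive probability, every vertex $v$ sees many \emph{repeated} colors on its neighborhood. Concretely, we want at least $\eps C$ colors that appear on at least two neighbors of $v$ as retained colors. More precisely, we want
\[
\#\{\text{retained colored vertices in } N(v)\} - \#\{\text{distinct retained colors in } N(v)\} \ge \eps C.
\]
Once this holds, each uncolored vertex has at most $\Delta - \eps C$ forbidden colors. We can then finish greedily with the palette $[C-\lfloor \eps C\rfloor]$, after first recoloring or discarding any retained colors outside the shrunken palette. The cleaner route is to run the procedure with palette size $(1-\eps)(\Delta+1)$ from the start and check that the savings beat the deficit.

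The key steps, in order, are as follows.

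\textbf{Step 1: the expectation.} Local sparsity gives at least $\delta\binom{\Delta}{2}$ non-edges inside $N(v)$ (when $\deg v$ is near $\Delta$; low-degree vertices have slack already). For each non-adjacent pair $u,w\in N(v)$, the probability that both are active, pick the same color, and both retain it is about $p^2/C\cdot e^{-2p}(1-o(1))$.

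\textbf{Step 2: handling triples.} Summing over the non-edges gives expected repeated savings of order $\delta p^2 e^{-2p}\Delta/2$. We must subtract overcounting from colors appearing three or more times, which is $O(p^3\Delta)$. Choosing $p$ small relative to $\delta$ yields expected savings at least $c\,\delta p^2\Delta$. Set $\eps$ somewhat below this.

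\textbf{Step 3: concentration.} The savings count is a function of the independent choices at vertices within distance $2$ of $v$. We apply Talagrand's inequality, or a certified-type bounded-differences version, to show deviation at most $\Delta^{2/3}$ except with probability $e^{-\Delta^{\Omega(1)}}$. Changing one vertex's choice affects the count by $O(1)$, and certificates of small size exist. This concentration step is the main obstacle. The naive Azuma bound fails because the dependency neighborhood has size $\Delta^2$, so a Talagrand-type argument, as in Molloy--Reed, is required. The count must be split into a "colored pairs" part and a "pairs that lost their color" part, each concentrated separately.

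\textbf{Step 4: conclusion.} Each bad event depends on at most $\Delta^4$ others, so the Lovász Local Lemma gives an outcome where every vertex enjoys savings at least $\eps' C$. The greedy completion then uses at most $(1-\eps)(\Delta+1)$ colors for large $\Delta$.
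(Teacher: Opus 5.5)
The paper does not prove this statement; it quotes it from the literature (Molloy--Reed) as background. Your outline is the standard argument from that source: one round of the naive colouring procedure with the palette of size $(1-\eps)(\Delta+1)$ fixed from the start, greedy completion using colours repeated on $N(v)$, Talagrand's inequality, and the Local Lemma with dependency degree $\Delta^4$.

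There is, however, a genuine gap in Step~3: the bounded-differences claim is false for the quantities you define. Write $m_c$ for the number of neighbours of $v$ that retain colour $c$, so your savings are $\sum_c (m_c-1)_+$. Suppose $x$ is active with colour $c$ and adjacent to pairwise non-adjacent $u_1,\dots,u_k\in N(v)$, all active with colour $c$ and with no other conflicts. Changing $x$'s colour, or deactivating $x$, lets all $k$ of them retain $c$, so the savings jump by $k-1$, and $k$ can be of order $\Delta$. Your split into ``colored pairs'' and ``pairs that lost their color'' has the same problem: a single $x\in N(v)$ switching to a colour already chosen by $k$ of its non-neighbours in $N(v)$ changes the number of monochromatic non-adjacent pairs by $k$. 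Moreover, retention is a negative event (no neighbour shares the colour), so it has no small certificate. Hence neither the standard form of Talagrand's inequality nor Azuma applies as you describe. You would need an exceptional-outcomes version plus an argument that large jumps are rare.

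The fix, which is how the original argument is organised, is to count \emph{colours}, so that each colour contributes at most $1$. Let $X_v$ be the number of colours assigned to at least two active neighbours of $v$ and retained by all of them. Such neighbours are pairwise non-adjacent, so the savings are at least $X_v$. Write $X_v=A_v-D_v$, where $A_v$ counts colours assigned to at least two neighbours of $v$, and $D_v$ counts those among them that some neighbour of $v$ loses. Changing one trial affects only the old and new colour of one vertex, so $A_v$ and $D_v$ each change by at most $2$. Each is certified by at most $3$ trials per counted colour, and both have mean $O(\Delta)$, so Talagrand gives deviations $O(\Delta^{2/3})$ with probability $1-e^{-\Delta^{\Omega(1)}}$.

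For the mean, sum over non-adjacent $u,w\in N(v)$ and colours $c$ the probability that $u,w$ are the only vertices of $N(v)$ with colour $c$ and no vertex of $N(u)\cup N(w)$ has colour $c$. For fixed $c$ these events are disjoint, so $\E[X_v]$ is at least the number of non-edges in $N(v)$ times $\frac{p^2}{C}(1-\frac{p}{C})^{3\Delta}$. This removes the need for your triple correction and for a small activation probability; one can simply take $p=1$, as in the original argument, though keeping them is harmless.

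Two minor points. First, your bound of $\delta\binom{\Delta}{2}$ non-edges holds only when $\deg(v)=\Delta$. Taking $\eps\le\delta/4$, every vertex that actually needs savings has degree at least $(1-\eps)(\Delta+1)-1$, and hence at least $(\delta/2-o(1))\binom{\Delta}{2}$ non-edges in its neighbourhood. Second, discarding retained colours outside a shrunken palette would destroy savings, so fixing the palette size at the outset, as you eventually propose, is the route to take.
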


Alon, Krivelevich, and Sudakov generalized this result, obtaining an asymptotic improvement when the sparsity parameter $f$ is large (for instance, when $f = \Theta(\Delta)$).\footnote{Throughout this work, we use standard asymptotic notation $O(\cdot)$, $\Omega(\cdot)$, $o(\cdot)$, etc.}

\begin{theorem}[{Alon, Krivelevich, and Sudakov~\cite{AKSConjecture}}]\label{theo:AKS}
    There exists a constant $C > 0$ such that for all sufficiently large $\Delta \in \N$ and all $f > 1$, if $G$ is a $\frac{\Delta^2}{f}$-locally-sparse graph of maximum degree $\Delta$, then $\chi(G) \le C \frac{\Delta}{\log f}$.
\end{theorem}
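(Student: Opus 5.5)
We would prove the Alon--Krivelevich--Sudakov bound in Theorem~\ref{theo:AKS} by a semi-random ``nibble'' coloring combined with a final greedy step. First, we dispose of trivial ranges of $f$. If $f \le f_0$ for a suitably large absolute constant $f_0$, then the bound $\chi(G) \le \Delta + 1 \le C\frac{\Delta}{\log f}$ holds once $C$ is large enough. If $f > \Delta^2$, then $G$ is $0$-locally-sparse, i.e.\ triangle-free. Since a $\frac{\Delta^2}{f}$-sparse graph is also $\frac{\Delta^2}{f'}$-sparse for every $f' \le f$, we may cap $f$ at $\Delta^2$, losing only a constant factor in $\log f$. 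So assume $f_0 \le f \le \Delta^2$. In particular, the triangle-free case with $f=\Delta^2$ recovers Johansson's $O(\Delta/\log \Delta)$ bound, and our argument must handle it.

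The main step is a reduction from sparse neighborhoods to a large supply of colors, which we then exploit. We fix a palette of $k = C\Delta/\log f$ colors and run the Rödl-nibble / Kim--Johansson procedure. In each round, every uncolored vertex activates with small probability and picks a uniformly random available color. It keeps the color unless a neighbor picked the same one, and an equalizing coin flip keeps the retention probabilities of all colors equal. We track two parameters for each vertex $v$: the size $\ell_t(v)$ of its list of available colors, and, for each color $c$, the ``color degree'' $d_t(v,c)$, the number of uncolored neighbors still having $c$ available. In expectation the ratio $d_t/\ell_t$ decays by a factor of about $e^{-\text{(activation)}}$ per round, whereas $\ell_t$ decays more slowly. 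After $O(\log f\cdot \text{poly})$ rounds, every vertex therefore has $\ell(v) > 2\max_c d(v,c)$, and we finish greedily (a list coloring with $\ell > \max$ color-degree $\times$ const works via the Local Lemma, e.g.\ Reed--Sudakov). Concentration in each round follows from Talagrand's inequality together with the Lovász Local Lemma; dependencies extend only to distance $2$, so the dependency degree is $\mathrm{poly}(\Delta)$.

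The obstacle, and the place where local sparsity enters, is the concentration and expectation of $d_t(v,c)$. In a triangle-free graph, the neighbors of $v$ have (conditionally) nearly independent outcomes, because no two of them are adjacent. With up to $\Delta^2/f$ edges inside $N(v)$, their choices are positively correlated, and the expected number of $v$'s neighbors that retain $c$ in their lists can exceed the triangle-free estimate. The fix we would try first follows AKS: we partition the analysis according to the edges inside $N(v)$. Only pairs of adjacent neighbors introduce error, and there are at most $\Delta^2/f$ of them. At each step the resulting correction term is of order $(\Delta^2/f)\cdot (1/k)^2\cdot \Delta$-ish, which is negligible relative to the main term while $\ell_t \gtrsim \Delta/\sqrt{f}$-type thresholds hold. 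This explains why the process runs for only about $\log f$ (not $\log\Delta$) ``halvings'', and hence why the final bound is $\Delta/\log f$. Alternatively, we could first split $V$ randomly into $\Delta/\sqrt f$-ish parts to reduce to nearly triangle-free pieces. We expect verifying that these error terms stay controlled throughout the iteration to be the hardest and most technical part; the remaining parts are standard nibble bookkeeping.
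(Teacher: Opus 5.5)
The paper does not prove this theorem; it cites Alon--Krivelevich--Sudakov, whose argument partitions $V(G)$ into a small number of triangle-free induced subgraphs and applies Johansson's theorem to each part as a black box. Your main route reproves everything by a direct nibble instead, and as written its central step fails, already at the triangle-free endpoint $f=\Delta^2$ that you say the argument must cover.

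The obstruction is not triangles but $4$-cycles. In a triangle-free graph the neighbors of $v$ are conditionally independent only given the choices of their own neighbors, and those are shared. Suppose $v$ lies in a $K_{\Delta,\Delta}$ component, so all of $N(v)$ has the same neighborhood $W\ni v$. Then for each color $c$, either some vertex of $W$ is colored $c$ and essentially all of $N(v)$ loses $c$ at once, or none is and all of $N(v)$ keeps it. So a single trial moves $d_t(v,c)$ by $\Theta(\Delta)$, and Talagrand's inequality gives nothing. Worse, for a $\mathrm{Keep}$-fraction of the colors remaining in $L(v)$ one only has $d_{t+1}(v,c)\approx(1-\kappa\,\mathrm{Keep})\,d_t(v,c)$. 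Meanwhile $\ell_{t+1}(v)\approx \mathrm{Keep}\cdot \ell_t(v)$ with $\mathrm{Keep}\approx 1-\kappa r$, where $r=\max_c d_t(v,c)/\ell_t(v)$. Hence $r\mapsto r(1-\kappa+\kappa r)$, which \emph{increases} whenever $r>1$. You start at $r_0=\Delta/k=(\log f)/C$, which exceeds $1$ once $f>e^{C}$, so the invariant $\ell(v)>2\max_c d(v,c)$ needed for the finishing step cannot be reached by this analysis.

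This is exactly why Kim's nibble needs girth $5$, and why triangle-free proofs need a genuinely new ingredient. Options include Johansson-type adaptive color weights, Molloy's entropy compression, or discarding colors of large color degree and controlling them only on average. The first is the mechanism behind the paper's algorithm: martingale weights $p_i$, a bad-color threshold $\hat p$, and an entropy potential bounding the weight lost to bad colors. Your triangle-correction heuristic is sound; the threshold $\ell\gtrsim\Delta/\sqrt f$ mirrors the choice $\hat p\approx f^{1/2}/d$ in Lemma~\ref{lemma: H}. But it is the secondary issue. The $4$-cycle concentration mechanism you filed under ``standard bookkeeping'' is the missing idea.

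Your fallback (random partition, then Johansson) is the AKS route and sidesteps all of this by using Johansson as a black box. However, ``nearly triangle-free'' parts are not enough to invoke it, and triangle-free parts cannot be obtained in one shot. With $s$ parts of degree about $D=\Delta/s$, a vertex lies in a triangle inside its part with probability up to $D^2/f$. For any $D$ that still gives $\log D=\Omega(\log f)$, this is too large to beat the $\mathrm{poly}(\Delta)$ dependency degree in the Local Lemma. You must additionally handle the vertices that do lie in triangles of their part, for instance by coloring them separately with few extra colors. When $f$ is small compared with $\Delta$, you must also iterate the partition so that the Local Lemma applies at each stage.

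Finally, as literally stated (all $f>1$), the theorem is false: $C_5$ plus a disjoint $K_{1,\Delta}$ is triangle-free with $\chi=3>C\Delta/\log f$ once $\log f>C\Delta/3$. The intended range is $f\le\Delta^2+1$. Your claim that capping $f$ at $\Delta^2$ ``loses only a constant factor in $\log f$'' is false for superpolynomial $f$, and should be replaced by that restriction.
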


Furthermore, they showed that Theorem~\ref{theo:AKS} is optimal up to the implicit constant $C$~\cite[Proposition~1.2]{AKSConjecture}. Davies, Kang, Pirot, and Sereni later established that Theorem~\ref{theo:AKS} holds with $C = 2 + o(1)$, which remains the best-known bound~\cite[Theorem~5]{DKPS}; their result extends to the more general framework of DP-coloring (see \S\ref{subsection: DP def}).

In this paper, we study the linear programming relaxation of graph coloring. The \emph{fractional chromatic number}, $\chi_f(G)$, is the optimal value of the linear program:
\begin{mini}
    {\lambda \in \mathbb{R}^{|\cI(G)|}}{\sum_{I \in \cI(G)}\lambda_I}{\label{eq:chif}}{}
    \addConstraint{\sum_{\substack{I \in \cI(G) \\ v \in I}}\lambda_I}{\ge 1}{\qquad \forall v \in V(G)}
    \addConstraint{\lambda_I}{\ge 0}{\qquad \forall I \in \cI(G),}
\end{mini}
where $\cI(G)$ denotes the family of independent sets of $G$. Every proper coloring corresponds to an integral feasible solution to \eqref{eq:chif}, implying $\chi_f(G) \le \chi(G)$.

More generally, any feasible fractional coloring $\lambda$ of weight $q$ induces a probability distribution $\cD$ over $\cI(G)$ via $\pr_{I \sim \cD}[I = S] = \lambda_S / q$, satisfying $\pr_{I \sim \cD}[v \in I] \ge 1/q$ for all $v \in V(G)$. This probabilistic perspective is particularly effective, as upper bounds on $\chi_f(G)$ are frequently established by analyzing randomized procedures that sample independent sets (cf.~\cite{martinsson2025random, dhawan2026fractional, high_girth, davies2021occupancy, davies2017independent, davies2018average}).

Recently, Martinsson and Steiner~\cite{martinsson2025random} initiated the study of bounding $\chi_f(G)$ under degeneracy constraints, resolving a conjecture of Harris~\cite{harris2019some}:

\begin{theorem}[{Martinsson and Steiner~\cite{martinsson2025random}}]\label{theo: martinsson steiner}
    Let $G$ be a $d$-degenerate triangle-free graph. Then, $\chi_f(G) \le (4 + o(1))\frac{d}{\log d}$.
\end{theorem}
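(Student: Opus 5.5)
We sample a random independent set by processing the vertices along a degeneracy ordering, and then check that every vertex lands in it with probability at least $(1-o(1))\frac{\log d}{4d}$. Fix an ordering $v_1,\dots,v_n$ in which each $v_i$ has at most $d$ back-neighbors, and set $q=(4+\eps)\frac{d}{\log d}$. The goal is a distribution $\cD$ on $\cI(G)$ with $\pr_{I\sim\cD}[v\in I]\ge 1/q$ for every $v$; by the probabilistic reformulation of \eqref{eq:chif}, this gives $\chi_f(G)\le q$.

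\textbf{The process.} Each vertex $v$ first draws an independent activation bit with probability $p$. We then go left to right. When we reach $v$, we look at the set $B(v)$ of back-neighbors already accepted into $I$, and accept $v$ with a probability $g(|B(v)|)$, or more generally $g$ applied to some statistic of the already-decided back-neighborhood. The function $g$ is chosen so that the acceptance probability of every vertex is exactly a target value $\alpha$. Concretely, we pick a threshold $\tau\approx \frac{\log d}{2}$ (to be optimized) and accept $v$ with a weight that decays in $|B(v)|$, normalized so that $\pr[v\in I]=\alpha$. A cleaner alternative in the same spirit is to assign each vertex a uniform random color from $[q]$, resample along the ordering, and keep the vertices whose color is not taken by any earlier kept back-neighbor. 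I expect the Martinsson--Steiner style analysis to correspond to choosing the acceptance probability adaptively so that the marginals stay exactly $\alpha$; we argue by induction along the ordering that this is feasible.

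\textbf{The induction.} We prove by induction on $i$ that $\pr[v_i\in I]=\alpha$, where $\alpha=(1-o(1))\frac{\log d}{4d}$. Each back-neighbor is in $I$ with probability $\alpha$, so $\E|B(v)|\le d\alpha\approx\frac{\log d}{4}$. The point of triangle-freeness is that the back-neighborhood $N^-(v)$ is an independent set. This lets us argue that conditioning on the membership of one back-neighbor does not push up the others through shared edges inside $N^-(v)$. We want a bound of the form $\pr[B(v)=\emptyset]\ge d^{-1/4-o(1)}$, or a correlation-type lower bound. To get it, we use the independence of $N^-(v)$ to show that the events $\{u\notin I\}$ for $u\in N^-(v)$ are positively correlated (Harris/FKG), or at least not too negatively correlated. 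That gives $\pr[\bigcap_u \{u\notin I\}]\ge(1-\alpha)^d\approx e^{-\log d/4}$, so $v$ can be accepted with probability $\alpha/\pr[B(v)=\emptyset]$. That ratio is at most $1$ provided $\alpha\le d^{-1/4}$, which holds with plenty of room. The constant $4$ should then come out of balancing the exponent against a fractional relaxation: the naive accounting gives only $1/4$ in the exponent, and we optimize the exponent $\beta$ in $\pr[B(v)=\emptyset]\ge d^{-\beta}$ against $\alpha=\beta\log d/d$.

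\textbf{Main obstacle.} The hardest step is the correlation bound. The events $\{u\notin I\}$ for $u\in N^-(v)$ depend on earlier randomness that is shared through common ancestors. These events are not monotone in any obvious product structure, because an earlier vertex being accepted blocks some vertices and thereby frees others. The first thing I would try is to couple the process with independent per-vertex random seeds and to show monotonicity in a suitable partial order. Failing that, I would use a Lovász-local-lemma-type or entropy argument that bounds how much conditioning on $\{u\notin I\}$ can change the law of another back-neighbor $w$. Triangle-freeness guarantees $uw\notin E$, so this influence passes only through paths of length at least $2$, and we would need to show it is small.
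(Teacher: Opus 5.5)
\textbf{Gap: the bound $\pr[B(v)=\emptyset]\ge(1-\alpha)^{d}$ is never established.} Your whole argument rests on this bound. Since $I$ must be independent, $v$ has to be rejected whenever $B(v)\neq\emptyset$, so your process is really: accept $v$ with probability $\alpha/\pr[B(v)=\emptyset]$ when $B(v)=\emptyset$.

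The Harris/FKG route already fails on a $5$-cycle. Take the order $a,b,u,w,v$ with edges $ab,bu,aw,uv,wv$. All marginals equal $\alpha$, but
\[
\pr[u\in I,\,w\in I]=(1-\alpha)\cdot\frac{1-2\alpha}{1-\alpha}\cdot\Bigl(\frac{\alpha}{1-\alpha}\Bigr)^{2}=\frac{\alpha^{2}(1-2\alpha)}{(1-\alpha)^{2}}<\alpha^{2},
\]
so $\pr[u\notin I,\,w\notin I]<(1-\alpha)^{2}$. Putting $a$ into $I$ kills $w$, but by killing $b$ it boosts $u$. This is exactly the non-monotonicity you flag, and it cannot be patched later as a technicality.

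In fact the inequality, if true, would prove too much. For $\alpha=\beta\log d/d$ and any fixed $\beta<1$ we have $\alpha\le(1-\alpha)^{d}\approx d^{-\beta}$. So you could take $\alpha=(1-o(1))\log d/d$ and get $\chi_f(G)\le(1+o(1))\frac{d}{\log d}$. That is the Martinsson--Steiner conjecture, which is open for triangle-free graphs and known only for girth at least $5$.

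Hence your ``plenty of room'' is a warning sign, and your explanation of where the constant $4$ comes from does not hold together. The fallback you mention (an LLL- or entropy-type bound on influence along paths of length $\ge 2$) is not spelled out. With a fixed acceptance rule there is also no mechanism for vertices at which $\pr[B(v)=\emptyset]<\alpha$.

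\textbf{How the paper proceeds.} The paper obtains this theorem as the case $f=d^{2}+1$ of Theorem~\ref{theorem: main theorem}, and it avoids correlation inequalities entirely.
\begin{enumerate}
\item Each vertex carries an adaptive weight $p(v)$, initially $\alpha$. When a back-neighbor $u$ is processed, $p(v)$ is set to $0$ if $u$ is accepted and multiplied by $1/(1-p(u))$ otherwise. Then $v$ is accepted with its current weight.
\item Thus $p(v)$ is a martingale. The marginal of $v$ is $\alpha$ minus the expected weight on vertices whose weight exceeded a threshold $\hp$; these are frozen and discarded, which keeps all probabilities below $1$.
\item The lost mass is controlled by the entropy $-\E[p\log p]$. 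Processing a non-discarded back-neighbor $u$ lowers it by at most $\E[p(v)p(u)]/(1-\hp)$.
\item Triangle-freeness enters exactly in showing $\E[p(v)p(u)]\le\alpha^{2}$ for adjacent $u,v$: no third vertex updates both weights, so the product is a martingale.
\end{enumerate}
Since $p\log(p/\alpha)\ge 0$, one gets
\[
\pr[v\in I]\ge\alpha\Bigl(1-\frac{\alpha d}{(1-\hp)\log(\hp/\alpha)}\Bigr).
\]
The optimal choice is $\alpha\approx\frac{\log d}{2d}$, i.e.\ $\alpha d\approx\frac12\log d$, and this loses half of the mass. Those two factors of $2$ are where the constant $4$ comes from.
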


Crucially, there exist $d$-degenerate triangle-free graphs $G$ with $\chi(G) = d+1$~\cite{kostochka1999properties, descartes1954solution}, demonstrating that an analogous bound cannot hold for the ordinary chromatic number and revealing a sharp separation in the fractional setting.

The proof of Theorem~\ref{theo:AKS} relies on partitioning $V(G)$ into a small number of triangle-free induced subgraphs and applying a landmark result of Johansson on the chromatic number of triangle-free graphs to each part~\cite{Joh_triangle}.\footnote{Although widely cited as a DIMACS Technical Report, Johansson's original manuscript is no longer easily accessible. A complete treatment by Molloy and Reed~\cite{MolloyReed} establishes the explicit bound $\chi(G) \le 160 \frac{\Delta}{\log \Delta}$.} Martinsson and Steiner observed that a similar reduction allows Theorem~\ref{theo: martinsson steiner} to be extended to a broader notion of local sparsity:

\begin{definition}[Left Local Sparsity]\label{def: local sparsity}
    For $k \in \mathbb{R}$ and a graph $G = (V, E)$, a vertex ordering $(v_1, \dots, v_n)$ of $V(G)$ is \emph{left $k$-locally-sparse} if for each $i \in [n]$, the induced subgraph $G[N(v_i) \cap \{v_1, \dots, v_{i-1}\}]$ contains at most $\lfloor k \rfloor$ edges.
\end{definition}

For a $d$-degenerate graph admitting a left locally sparse degeneracy ordering, random sampling in the spirit of Alon, Krivelevich, and Sudakov~\cite{AKSConjecture} paired with Theorem~\ref{theo: martinsson steiner} yields the following general bound:\footnote{While not explicitly stated as a formal theorem in \cite{martinsson2025random}, Martinsson and Steiner outline this bound in \cite[p.~5]{martinsson2025random}.}

\begin{theorem}[{Martinsson and Steiner~\cite{martinsson2025random}}]\label{theo: martinsson steiner local}
    There exist constants $C, C' > 0$ such that for all sufficiently large $d \in \N$ and all $C\leq f \leq d^2 + 1$, if $G$ is a $d$-degenerate graph that admits a left $\frac{d^2}{f}$-locally-sparse degeneracy ordering, then $\chi_f(G) \le C' \frac{d}{\log f}$.
\end{theorem}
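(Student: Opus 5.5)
The plan is to follow the Alon--Krivelevich--Sudakov reduction: split $V(G)$ at random into roughly $d/\sqrt f$ classes and apply Theorem~\ref{theo: martinsson steiner} inside each class. Instead of a deterministic partition I will build a probability distribution over independent sets, which is all that \eqref{eq:chif} requires. That way I never need every vertex to be handled; it suffices that each vertex is handled with constant probability.

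\textbf{Setup.} Fix the left $\frac{d^2}{f}$-locally-sparse degeneracy ordering $(v_1,\dots,v_n)$. Let $s = \lceil 2d/\sqrt f\,\rceil$; since $f\le d^2+1$, we have $s\ge 1$ and $d/s = \Theta(\sqrt f)$. Assign each vertex independently and uniformly a class in $[s]$, and let $V_1,\dots,V_s$ be the classes. Call $v\in V_i$ \emph{good} if two conditions hold:
\begin{itemize}
\item[(a)] $v$ has at most $2d/s$ back-neighbours in $V_i$;
\item[(b)] no edge of $G$ has both endpoints among the back-neighbours of $v$ in $V_i$.
\end{itemize}
Condition on the class of $v$.
\begin{itemize}
\item For (b): each of the at most $d^2/f$ edges among the back-neighbours of $v$ lies entirely in $V_i$ with probability $1/s^2$. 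So (b) fails with probability at most $d^2/(fs^2)\le 1/4$.
\item For (a): the number of back-neighbours of $v$ in $V_i$ is a binomial variable with mean at most $d/s\le \sqrt f/2 + 1$, up to the ceiling. A Chernoff bound shows (a) fails with probability $e^{-\Omega(\sqrt f)}$, which is at most $1/4$ once $f\ge C$ for a large constant $C$.
\end{itemize}
Hence $\Pr[v \text{ good}]\ge 1/2$.

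\textbf{Structure of the good set.} Let $U_i$ be the set of good vertices of $V_i$, and consider $G[U_i]$ under the restricted ordering.
\begin{itemize}
\item $G[U_i]$ is triangle-free. Any triangle has a last vertex $v$ in the ordering, and the other two vertices form an edge among the back-neighbours of $v$ inside $V_i$, contradicting (b) for $v$.
\item $G[U_i]$ is $d'$-degenerate with $d' := \lfloor 2d/s\rfloor = \Theta(\sqrt f)$, by (a).
\end{itemize}
Now bound $\chi_f(G[U_i])$. If $d'$ exceeds the threshold of Theorem~\ref{theo: martinsson steiner}, that theorem gives $\chi_f(G[U_i])\le 5d'/\log d'$. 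Otherwise the trivial bound $\chi_f\le d'+1$ holds, and since $d'$ is then bounded this is $O(1)\le O(\sqrt f/\log f)$. In either case, using $\log d' = \tfrac12\log f - O(1)$, we get $\chi_f(G[U_i])\le Q:=K\sqrt f/\log f$ for an absolute constant $K$.

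\textbf{Combining.} Sample the random partition, then a uniform index $j\in[s]$, and then an independent set $I\subseteq U_j$ from the distribution that realises $\chi_f(G[U_j])\le Q$ (padding with the empty set so the total weight is exactly $Q$). Then $I$ is independent in $G$. Moreover, for every $v$,
\[
\Pr[v\in I]\ \ge\ \Pr[v\text{ good}]\cdot\Pr[j \text{ is the class of } v]\cdot \tfrac1Q\ \ge\ \frac{1}{2sQ}.
\]
Hence $\chi_f(G)\le 2sQ = O\bigl((d/\sqrt f)\cdot \sqrt f/\log f\bigr) = C' d/\log f$.

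\textbf{Main obstacle.} The delicate step is choosing $s$ so that two requirements hold simultaneously. The expected number of in-class back-edges, $d^2/(fs^2)$, must be a small constant. The in-class degeneracy $d/s$ must still be large enough that (a) concentrates and Theorem~\ref{theo: martinsson steiner} yields a $\log f$ rather than a $\log$ of something smaller. The choice $s\asymp d/\sqrt f$ balances these, and the hypothesis $f\ge C$ handles the concentration in (a). The remaining work is bookkeeping of constants and the small-$d'$ edge case, which I expect to be routine.
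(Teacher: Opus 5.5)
Your proof is correct, and it is essentially the argument the paper has in mind: the paper presents this result as following from random sampling in the spirit of Alon--Krivelevich--Sudakov combined with Theorem~\ref{theo: martinsson steiner}. Your random class assignment followed by a uniform choice of $j$ amounts to sampling each vertex independently with probability $1/s \asymp \sqrt f/d$, and your treatment of conditions (a) and (b), the triangle-freeness and degeneracy of $G[U_j]$, and the final averaging is sound. The reduction yields only an unspecified constant $C'$, whereas the paper's own Theorem~\ref{theorem: main ordinary coloring}, obtained from Theorem~\ref{theorem: main theorem}, gives $C' = 8 + o(1)$ by a different entropy-based sampling algorithm that avoids the constant-factor loss of the partitioning step.
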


Recently, Davies~\cite{davies2026random} generalized the above bound to the setting of fractional coloring with local demands–wherein each vertex $v$ ``demands'' a certain probability of being included in the random independent set $I$–introduced by Kelly and Postle~\cite{kelly2024fractional}.
Optimizing the constant $C'$ in Theorem~\ref{theo: martinsson steiner local} remains an intriguing direction. Our first main result provides progress toward this goal.

\begin{theorem}\label{theorem: main ordinary coloring}
    There exists $C > 0$ such that for all sufficiently large $d$ and all $C \le f \le d^2 + 1$, if $G$ is a $d$-degenerate graph that admits a left $\frac{d^2}{f}$-locally-sparse degeneracy ordering, then
    \[
        \chi_f(G) \le (8 + o(1))\frac{d}{\log f}.
    \]
\end{theorem}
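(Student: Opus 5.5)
We plan to prove Theorem~\ref{theorem: main ordinary coloring} directly with a random sampling procedure in the spirit of Martinsson and Steiner, rather than through partitioning into triangle-free pieces. That reduction loses a constant factor, so we will work with the sparsity parameter itself.

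\textbf{Setting up the sampler.} Fix a left $\frac{d^2}{f}$-locally-sparse degeneracy ordering $v_1,\dots,v_n$. Let $q = (8+\eps)\frac{d}{\log f}$ and give each vertex a uniformly random ``color'' from $[q]$. Process vertices left to right. Vertex $v_i$ is \emph{activated} if its color differs from the colors of all its activated back-neighbors. We also add a random thinning at rate $p$, chosen to make the activation probabilities uniform. The final independent set is a single color class among the activated vertices, or equivalently the set of activated vertices receiving a fixed color, by symmetry.

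The goal is to show that each vertex lies in the random independent set with probability at least $1/q$. This amounts to showing that a vertex $v$ is activated with probability at least roughly $1/\mathrm{poly}$, normalized correctly. Following Martinsson--Steiner, we maintain by induction along the ordering that every vertex is \emph{retained} with probability exactly some target $\pi$, using extra independent coin flips to equalize probabilities downward.

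\textbf{Inductive step.} For $v_i$ with back-neighborhood $B$, $|B|\le d$, the probability that no activated back-neighbor shares $v_i$'s color is $\mathbb{E}\bigl[(1-1/q)^{X}\bigr]$, where $X$ is the number of retained vertices in $B$ (up to color dependencies). By Jensen's inequality this is at least $(1-1/q)^{\mathbb{E}X}$ only if we control $X$ from above. Here local sparsity enters: the expected number of retained vertices in $B$ is $d\pi$. The correlation between retentions in $B$ comes only through edges inside $B$ and through common ancestors. The target choice is $\pi \approx f^{-1/8}$ or similar, so that $e^{-d\pi/q}$ matches.

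\textbf{Main obstacle.} The hardest step is the one where local sparsity must be used to bound the number of retained back-neighbors. Retained vertices form an independent set, so a set $S\subseteq B$ of retained vertices spans no edges of $G[B]$. Since $G[B]$ has at most $d^2/f$ edges, a Turán/Shearer-type counting argument, applied to the hard-core-like distribution on $B$, shows that the retained set typically has size at most about $\frac{d\log f}{\log f}$-scale quantities. Specifically, we would use an occupancy-fraction inequality of the Davies--Kang--Pirot--Sereni type for locally sparse neighborhoods. This gives a lower bound on $\mathbb{E}[(1-1/q)^X]$ in terms of $\mathbb{E}X$ together with the constraint $\pr[\text{retained}]\ge \pi$.

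Optimizing the resulting relation between $\pi$ and $q$ should yield $q=(8+o(1))d/\log f$. Within this constant, a factor of $2$ is lost to the ordering and correlations that arise in the degenerate (rather than bounded-degree) setting, and a factor of $4$ comes from the triangle-free analysis. The condition $f\ge C$ ensures that the error terms are $o(1)$.
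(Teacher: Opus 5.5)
There is a genuine gap, and it starts with the sampler, which is mis-normalized. Let $I$ be the set of activated vertices of one fixed color. By color symmetry, $\Pr[v\in I]=\Pr[v\text{ activated}]/q$, so $I$ only certifies $\chi_f(G)\le q/\min_v\Pr[v\text{ activated}]$. Asking for $\Pr[v\in I]\ge 1/q$ forces every vertex to be activated almost surely, which means a proper $q$-coloring. That is impossible in general for $q=(8+\varepsilon)d/\log f$, since there are $d$-degenerate triangle-free graphs with $\chi(G)=d+1$. With your own target $\pi\approx f^{-1/8}$ you would only get $\chi_f(G)\le qf^{1/8}$.

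The colors from $[q]$ do no work. What has to be built is a single random independent set $I$ with $\Pr[v\in I]\ge\frac{\log f}{(8+\varepsilon)d}$ for every $v$. The whole difficulty is lower-bounding the probability that $v$ is not blocked by its back-neighbors. A side remark: $x\mapsto(1-1/q)^x$ is convex, so Jensen gives $\mathbb{E}[(1-1/q)^X]\ge(1-1/q)^{\mathbb{E}X}$ with no upper control on $X$. First-moment information is not what is missing.

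The step you call the main obstacle is where the proof has to happen, and the tools you name there do not apply. Sparsity of $G[B]$ gives no upper bound on independent subsets of $B$: Tur\'an- and Shearer-type bounds say sparse graphs have \emph{large} independent sets. The Davies--Kang--Pirot--Sereni occupancy inequalities concern the hard-core model and rely on its spatial Markov property and on a maximum-degree bound. The law on $B$ produced by a sequential process on a $d$-degenerate graph, with unbounded front-degrees, has neither property.

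What edges inside $N_b(v)$ actually do is correlate the states of adjacent back-neighbors of $v$ through common earlier neighbors. Martinsson--Steiner's equalization by extra coin flips uses nonlinear activation probabilities, and the paper notes that this is exactly why their argument does not extend to locally sparse graphs.

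The paper obtains the statement from its DP version via $\chi_f\le\chi_f^{\mathrm{DP}}$. The missing idea is a \emph{linear} scheme:
\begin{itemize}
\item Every weight starts at $\alpha$ and is a martingale. A selected vertex zeroes its front-neighbors' weights, and an unselected vertex $u$ multiplies them by $1/(1-p(u))$.
\item Weights above the cap $\hat p=f^{1/2-\varepsilon/40}/d$ are frozen and never selected.
\item The expected discarded mass at $v$ is at most $\mathbb{E}[p(v)\log(p(v)/\alpha)]/\log(\hat p/\alpha)$. An entropy computation bounds the numerator by $\alpha(\alpha d+2\hat p^2d^2/f)/(1-\hat p)$.
\end{itemize}
The term $2\hat p^2d^2/f=2f^{-\varepsilon/20}$ is precisely the contribution of edges of $G[N_b(v)]$. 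It is controlled by left local sparsity and is negligible for this $\hat p$. Since $\log(\hat p/\alpha)\approx\frac12\log f$, the discarded mass is at most $\alpha/2$ once $\alpha\approx\frac{\log f}{4d}$. This gives $\Pr[v\in S]\ge\alpha/2\approx\frac{\log f}{8d}$, which is where the constant $8$ comes from. Your decomposition $8=2\cdot4$ is asserted rather than derived, and nothing in the proposal produces any explicit constant.
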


Notably, setting $f = d^2 + 1$ recovers Theorem~\ref{theo: martinsson steiner}. 
For $f \ge d^{\Omega(1)}$, this bound is tight up to the constant factor as there exist $d$-degenerate triangle-free graphs with $\chi_f(G) \ge (1 - o(1))\frac{d}{\log d}$~\cite{high_girth} (see Theorem~\ref{theo: lower bound degen girth}).
We show that this lower bound generalizes to small $f$ as well, implying that Theorem~\ref{theorem: main ordinary coloring} is sharp up to the leading constant.

\begin{theorem}\label{theo: lower bound}
    Let $\eps \in (0, 1)$ be arbitrary. There exist $C > 0$ and $d_0 \in \N$ such that for every $d \ge d_0$ and $f \ge C$, there exists a $d$-degenerate graph $G$ that admits a left $\frac{d^2}{f}$-locally-sparse degeneracy ordering such that
    \[
    \chi_f(G) \ge (1 - \eps)\frac{d}{\log f}.
    \]
\end{theorem}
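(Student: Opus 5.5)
We construct, for a given $f$, a graph whose back-neighborhoods have at most $d^2/f$ edges and whose fractional chromatic number is at least $(1-\eps)d/\log f$. There are two regimes.

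\textbf{Large $f$.} When $f \ge d^{c}$ for a small constant $c = c(\eps)>0$, we apply the lower bound for high-girth $d$-degenerate graphs (Theorem~\ref{theo: lower bound degen girth}). It gives a triangle-free $d$-degenerate $G$ with $\chi_f(G)\ge(1-\eps')d/\log d$. Any degeneracy ordering of a triangle-free graph is left $0$-locally-sparse, hence left $\frac{d^2}{f}$-locally-sparse. Since $\log f \le 2\log d+O(1)$ and $f\ge C$, this only gives $d/\log d \ge (1-o(1))\,d/(2\log f)$ directly. So this case needs the bound applied with a rescaled degree, handled below.

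\textbf{Small and intermediate $f$.} The natural construction is a blow-up-of-cliques product. Set $s = \lceil d^2/(f\cdot d)\rceil$, or more precisely choose $t$ with $d/t \approx f$ (roughly $t\approx d/f$ when $f\le d$). Take a high-girth $D$-degenerate graph $H$ with $D\approx d/t$ and $\chi_f(H)\ge(1-\eps')D/\log D$, which exists by Theorem~\ref{theo: lower bound degen girth}. Let $G = H[K_t]$ be the lexicographic product: each vertex of $H$ is replaced by a clique $K_t$, and edges of $H$ become complete bipartite graphs $K_{t,t}$. We then check three things.
\begin{enumerate}
\item[(i)] \emph{Degeneracy.} Order $G$ by the degeneracy order of $H$, and within each blob arbitrarily. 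Each vertex has at most $Dt + t-1 \approx d$ back-neighbors, so we tune $D=\lfloor (d-t)/t\rfloor$.
\item[(ii)] \emph{Local sparsity.} Since $H$ has girth at least $4$, the back-neighborhood of a vertex is a disjoint union of at most $D+1$ cliques of size at most $t$ with no edges between them. It therefore has at most $(D+1)\binom t2\approx dt/2$ edges, and $dt/2\le d^2/f$ whenever $t\le 2d/f$.
\item[(iii)] \emph{Fractional chromatic number.} We use the standard fact $\chi_f(H[K_t]) = t\,\chi_f(H)$, which follows from LP duality or from the fractional-clique formulation. This gives $\chi_f(G)\ge t(1-\eps')\frac{D}{\log D}\approx(1-\eps')\frac{d}{\log(d/t)}$.
\end{enumerate}
Choosing $t=\lfloor 2d/f\rfloor$ gives $\log(d/t)\approx\log(f/2)=(1-o(1))\log f$ for $f\ge C$, which is the required bound.

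When $f>2d$ we have $t<1$, so we set $t=1$ and use a sparser graph instead. Here we take $H$ itself with degeneracy $d$ but with fewer edges in each back-neighborhood, allowing triangles. Alternatively, we use $G = H[K_t]$ with $H$ only $D$-degenerate and pad the degeneracy bound. Since the lower bound required is in terms of $\log f \le 2\log d$, a triangle-free $d$-degenerate graph gives only $d/\log d$ against the target $d/\log f$, which can be up to a factor of $2$ smaller. The expected fix for $d\le f\le d^2$ is a random construction in the spirit of \cite[Proposition~1.2]{AKSConjecture}, adapted to degeneracy. Each vertex chooses $d$ back-neighbors among earlier vertices so that back-neighborhoods span about $d^2/f$ edges. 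For example, earlier vertices are grouped into clusters, and back-neighbors are chosen in cliques of size $\approx d/\sqrt f$ (arranged so that back-neighborhoods are disjoint unions of such cliques plus a random sparse part). We then bound the independence ratio by a first-moment count over independent sets of size about $(\log f/d)\,n$, as in the proof of Theorem~\ref{theo: lower bound degen girth}.

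\textbf{Main obstacle.} The hardest step is this regime $d<f\le d^2$, and in particular achieving the sharp constant $1$ rather than $1/2$ in the range near $f=d^2$. It requires redoing the first-moment independence-number computation for the random degenerate construction with planted cliques, and checking that the clique structure reduces the effective count to $\log f$ rather than $\log d$.
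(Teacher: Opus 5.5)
Your construction for small $f$ is the paper's: the lexicographic product of a triangle-free graph from Theorem~\ref{theo: lower bound degen girth} with $K_t$, together with $\chi_f(H\cdot K_t)=t\,\chi_f(H)$. However, there are two genuine problems.

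\textbf{The local-sparsity count in (ii) is wrong.} The back-neighborhood of $u=(v_i,j)$ is not a disjoint union of cliques. Each of the $j-1$ earlier vertices of $u$'s own blob is adjacent to all $t$ vertices of every blob $\{v_l\}\times[t]$ with $v_l\in N_b(v_i)$, because $v_iv_l\in E(H)$. Triangle-freeness of $H$ only removes edges between two \emph{different} back-neighbor blobs. The correct bound is therefore
$\binom{t-1}{2}+D\binom{t}{2}+D(t-1)t\approx\frac32 Dt^2\approx\frac32 dt$.
With your choice $t=\lfloor 2d/f\rfloor$ this is about $3d^2/f$, so the ordering is not left $\frac{d^2}{f}$-locally-sparse. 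The fix is harmless: take $t$ a smaller constant multiple of $d/f$, for instance $D=\lceil 2f\rceil$ and $t=\lfloor d/(D+1)\rfloor$ as in the paper. Then the edge count is at most $\frac32 t^2(D+1)\le\frac{3d^2}{2(2f+1)}\le d^2/f$. Since only $\log(d/t)=\log f+O(1)$ enters the final bound, the leading constant is unaffected.

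\textbf{You have the monotonicity backwards for large $f$.} Because of this you leave $f>2d$ unproved and propose an unnecessary random planted-clique construction. The target $(1-\varepsilon)d/\log f$ \emph{decreases} in $f$. Suppose $f\ge d^{1-\varepsilon/10}$, so that $\log f\ge(1-\varepsilon/10)\log d$. Take a triangle-free $d$-degenerate $G$ with $\chi_f(G)\ge(1-\varepsilon/2)d/\log d$; it is left $0$-locally-sparse under any degeneracy ordering. It already satisfies $\chi_f(G)\ge(1-\varepsilon)d/\log f$, since $\frac{1-\varepsilon}{1-\varepsilon/2}\le 1-\varepsilon/10$. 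This covers every $f\ge d$, including $f\approx d^2$ and $f>d^2$. Near $f=d^2$ the target is only about $d/(2\log d)$, which is half of what the triangle-free graph gives. This is exactly the paper's first case, and the product construction is needed only for $C\le f<d^{1-\varepsilon/10}$. The ``main obstacle'' you identify does not exist.
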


A primary limitation of the random sampling framework introduced by Alon, Krivelevich, and Sudakov is its failure in list and correspondence (DP) coloring models---a limitation inherited by fractional variants. While Alon, Tuza, and Voigt~\cite{alon1997choosability} established that the fractional list chromatic number equals $\chi_f(G)$ for all graphs, no such collapse occurs for the fractional DP-chromatic number, which forms the central focus of this paper.

The remainder of this introduction is organized as follows: in \S\ref{subsection: DP def}, we define fractional DP-colorings and highlight key distinctions from ordinary fractional colorings; in \S\ref{subsection: main results}, we formally present our main result for locally sparse graphs and discuss applications to specific graph classes; in \S\ref{subsection: notation}, we collect basic notation used throughout the paper; finally, in \S\ref{subsection: overview}, we provide an informal overview of our proof techniques.

\subsection{Fractional DP-coloring}\label{subsection: DP def}

Correspondence coloring (also known as \emph{DP-coloring}) is a generalization of list coloring introduced by Dvo\v{r}\'{a}k and Postle~\cite{DPCol} to address a question of Borodin. In list coloring, each vertex $v \in V(G)$ is assigned a list of available colors $L(v)$, and a proper $L$-coloring is a proper coloring in which each vertex $v$ receives a color from $L(v)$. DP-coloring extends this framework by allowing color identifications to vary from edge to edge: for each edge $uv \in E(G)$, color pairs between $L(u)$ and $L(v)$ are linked by a matching $M_{uv}$ (which may be partial or empty). A proper correspondence coloring is a choice $\phi(v) \in L(v)$ for each $v \in V(G)$ such that $\phi(u)\phi(v) \notin M_{uv}$ for all $uv \in E(G)$. Formally, correspondence colorings are defined using an auxiliary graph called a correspondence cover.

\begin{definition}[Correspondence Cover]\label{def:corr_cov}
    A \emph{correspondence cover} (or \emph{DP-cover}) of a graph $G$ is a pair $\cH = (L, H)$, where $H$ is a graph and $L\,:\,V(G) \to 2^{V(H)}$ is a mapping satisfying:
    \begin{enumerate}[label=(\normalfont DP\arabic*), leftmargin=\leftmargin + 1\parindent]
        \item The collection $\{L(v)\,:\,v \in V(G)\}$ forms a partition of $V(H)$;
        \item\label{dp:list_independent} $L(v)$ is an independent set in $H$ for each $v \in V(G)$; and
        \item\label{dp:matching} For all $u, v \in V(G)$, the edge set of $H[L(u) \cup L(v)]$ is a matching, which is empty if $uv \notin E(G)$.
    \end{enumerate}
\end{definition}

We refer to the vertices of $H$ as \emph{colors}. For any $c \in V(H)$, let $L^{-1}(c)$ denote the \emph{underlying vertex} of $c$ in $G$, i.e., the unique vertex $v \in V(G)$ with $c \in L(v)$. Two colors $c, c' \in V(H)$ are said to \emph{correspond} to each other if $cc' \in E(H)$, denoted $c \sim c'$.

An \emph{$\cH$-coloring} of $G$ is a mapping $\phi\,:\,V(G) \to V(H)$ such that $\phi(v) \in L(v)$ for all $v \in V(G)$. Analogously, a \emph{partial $\cH$-coloring} is a partial function $\phi\,:\,V(G) \dashrightarrow V(H)$ satisfying $\phi(v) \in L(v)$ whenever $\phi(v)$ is defined. A (partial) $\cH$-coloring $\phi$ is \emph{proper} if its image is an independent set in $H$, meaning $\phi(u) \not\sim \phi(v)$ for all $u,v \in V(G)$ where both values are defined. Thus, the image of a proper $\cH$-coloring corresponds to an independent set $I \subseteq V(H)$ meeting each list $L(v)$ in exactly one vertex.

A DP-cover $\cH = (L,H)$ is \emph{$q$-fold} if $|L(v)| = q$ for all $v \in V(G)$. The \emph{DP-chromatic number} of $G$, denoted $\chi^{\mathrm{DP}}(G)$, is the minimum integer $q$ such that $G$ admits a proper $\cH$-coloring for every $q$-fold correspondence cover $\cH$.

A rich body of literature investigates the DP-chromatic number of graphs under local sparsity constraints~\cite{AndersonBernshteynDhawan, anderson2024coloring, anderson2025coloring, bonamy2022bounding, dhawan2024palette, dhawan2025bounds, DKPS}. Bonamy, Kelly, Nelson, and Postle~\cite{bonamy2022bounding} generalized Theorem~\ref{theo:AKS} to DP-coloring, building on work of Molloy~\cite{Molloy} and Bernshteyn~\cite{bernshteyn2019johansson} for $K_r$-free graphs. Davies, Kang, Pirot, and Sereni~\cite{DKPS} further refined these methods by developing the \emph{local occupancy framework}, which yields the current best-known asymptotic bounds (see also~\cite{davies2020algorithmic, davies2025hard, dhawan2025bounds}).

Bernshteyn, Kostochka, and Zhu~\cite{bernshteyn2020fractional} introduced the notion of fractional DP-coloring, extending fractional list coloring to the DP setting. We begin with the key concept of an $(\eta, \cH)$-coloring, in which a vertex is assigned an $\eta$-fraction of its list rather than a single color.

\begin{definition}\label{def: eta H col}
    Let $\cH = (L, H)$ be a DP-cover of a graph $G$ and let $\eta \in [0, 1]$. An \emph{$(\eta,\cH)$-coloring} of $G$ is an independent set $S \subseteq V(H)$ such that $|S \cap L(u)| \ge \eta |L(u)|$ for all $u \in V(G)$.
\end{definition}

We now define the fractional DP-chromatic number.

\begin{definition}\label{def: chiDP}
    Let $G$ be a graph. For each $q \in \N^+$, define
    \[
        \theta_{\mathrm{DP}}(G, q) \coloneqq \max\bigl\{\eta \in [0, 1]\,:\,G \text{ admits an } (\eta, \cH)\text{-coloring for every } q\text{-fold DP-cover } \cH\bigr\}.
    \]
    The \emph{fractional DP-chromatic number} of $G$ is
    \[
        \chiDP(G) \coloneqq \inf\bigl\{\theta_{\mathrm{DP}}(G, q)^{-1}\,:\,q \in \N^+\bigr\}.
    \]
\end{definition}

In their seminal paper introducing this notion, Bernshteyn, Kostochka, and Zhu~\cite{bernshteyn2020fractional} established several foundational results. They showed that bipartite graphs containing at most one cycle are precisely those satisfying $\chiDP(G) \le 2$. Furthermore, they proved a general lower bound in terms of the maximum average degree,
\[
    \mathrm{mad}(G) \coloneqq \max\left\{\frac{2|E(H)|}{|V(H)|}\,:\,H \subseteq G, \, V(H) \neq \emptyset\right\}:
\]

\begin{theorem}[{Bernshteyn, Kostochka, and Zhu~\cite{bernshteyn2020fractional}}]\label{theorem: BKZ MAD lb}
    If $G$ is a graph with maximum average degree $D \ge 4$, then $\chiDP(G) \ge \frac{D}{2\log D}$.
\end{theorem}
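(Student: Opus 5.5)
The plan is a first-moment argument over a random full cover, applied to a densest subgraph. Let $G' \subseteq G$ satisfy $2|E(G')|/|V(G')| = D$, and write $n = |V(G')|$, $m = |E(G')| = Dn/2$. Fractional DP-colorability is monotone under taking subgraphs. Indeed, any $q$-fold DP-cover of $G'$ extends to a $q$-fold cover of $G$ by giving each remaining vertex a fresh list of $q$ colors and giving all edges outside $G'$ empty matchings. An $(\eta,\cdot)$-coloring of $G$ under this extended cover then restricts to one of $G'$. So it suffices to prove the following: for every $q \in \N^+$ and every $\eta$ with $\eta > \eta_0 \coloneqq \frac{2\log D}{D}$, some $q$-fold cover of $G'$ admits no $(\eta,\cH)$-coloring. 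This gives $\theta_{\mathrm{DP}}(G,q) \le \eta_0$ for all $q$, hence $\chiDP(G) \ge D/(2\log D)$. Here $\log$ is the natural logarithm.

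Fix $q$ and $\eta > \eta_0$, and set $s \coloneqq \lceil \eta q\rceil$, so that $s/q \ge \eta$. Build a random cover $\cH=(L,H)$ as follows. Take disjoint lists $L(v)$ of size $q$, and for each edge $uv \in E(G')$ place a uniformly random perfect matching between $L(u)$ and $L(v)$, independently over the edges. If an $(\eta,\cH)$-coloring exists, then by shrinking it we obtain an independent set $S$ with $|S \cap L(v)| = s$ for every $v$. There are $\binom{q}{s}^n \le (eq/s)^{sn}$ candidate sets $S$.

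For a fixed candidate $S$ and a fixed edge $uv$, the matching must send each of the $s$ colors of $S\cap L(u)$ outside $S \cap L(v)$. Exposing the partners one at a time, this happens with probability
\[
\prod_{i=0}^{s-1}\Bigl(1-\frac{s}{q-i}\Bigr) \le \Bigl(1-\frac{s}{q}\Bigr)^{s} \le e^{-s^2/q}.
\]
These events are independent across edges. Hence the expected number of independent candidates is at most
\[
\exp\Bigl(sn\bigl[\log(eq/s) - \tfrac{s}{q}\cdot\tfrac{D}{2}\bigr]\Bigr).
\]
It therefore suffices to show that $g(x) \coloneqq xD/2 - 1 - \log(1/x)$ is positive at $x = s/q$. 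Since $g$ is increasing in $x$ and $s/q \ge \eta > \eta_0$, we only need $g(\eta_0) \ge 0$. A direct computation gives $g(\eta_0) = \log D - 1 - \log D + \log(2\log D) = \log(2\log D) - 1$. This is positive exactly when $2\log D > e$, which holds for $D \ge 4$ because $2\log 4 \approx 2.77$. Then the expectation is strictly less than $1$, so some cover has no $(\eta,\cH)$-coloring.

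The main obstacle is the arithmetic at the threshold. We must use the precise estimate $\binom{q}{s}\le (eq/s)^s$, not a cruder bound, so that the constant $-1$ is beaten by $\log(2\log D)$. This is exactly where the hypothesis $D \ge 4$ is needed. We must also handle the rounding $s=\lceil \eta q\rceil$ for arbitrary $q$, including small $q$; this is harmless because $g$ is monotone and $s/q \ge \eta$.
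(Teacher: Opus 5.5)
The paper does not prove this statement itself. It is quoted from Bernshteyn, Kostochka, and Zhu, so there is no in-paper proof to compare against.

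Your argument is correct and complete. It is the standard first-moment proof of this bound: take a random $q$-fold cover of a densest subgraph, with independent uniform perfect matchings on its edges. The count $\binom{q}{s}^n \le (eq/s)^{sn}$ of candidate sets is then played off against the per-edge avoidance probability $e^{-s^2/q}$. The resulting condition reduces to $\log(2\log D) > 1$, which is exactly where the hypothesis $D \ge 4$ enters. To my knowledge this is also how Bernshteyn, Kostochka, and Zhu prove it.

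One small point deserves a word. When $2s > q$, the product $\prod_{i<s}\bigl(1 - s/(q-i)\bigr)$ contains a zero factor, so "each factor is at most $1 - s/q$" does not apply literally. The per-edge bound still holds trivially there, because the product is $0$.
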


This implies that the gap between $\chi_f(G)$ and $\chiDP(G)$ can be arbitrarily large (as seen, for instance, by taking $G = K_{n, n}$). Additionally, they proved that this lower bound is tight up to a constant factor for bipartite graphs by providing a matching upper bound in terms of degeneracy:

\begin{theorem}[{Bernshteyn, Kostochka, and Zhu~\cite{bernshteyn2020fractional}}]\label{theorem: BKZ bipartite ub}
    If $G$ is a $d$-degenerate bipartite graph, then $\chiDP(G) \le (1+o(1))\frac{d}{\log d}$.
\end{theorem}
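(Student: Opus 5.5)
The plan is to fix an arbitrary $q$-fold DP-cover $\cH=(L,H)$ with $q$ very large, and produce a random independent set $S\subseteq V(H)$. We then show that with positive probability $|S\cap L(v)|\ge \eta q$ for every $v$, where $\eta=(1-o(1))\frac{\log d}{d}$. Letting $q\to\infty$ then gives $\theta_{\mathrm{DP}}(G,q)\ge\eta-o(1)$, and hence the bound.

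Fix a degeneracy ordering and the bipartition $V(G)=A\cup B$, so that every vertex has at most $d$ back-neighbours, all on the opposite side. Orient each edge from its earlier endpoint to its later endpoint. The basic two-round scheme is as follows.
\begin{enumerate}
    \item For each $a\in A$, keep each colour of $L(a)$ independently with probability $p$; call the result $R_a$.
    \item For $b\in B$, let $S_b$ be the colours of $L(b)$ that are not matched in $H$ to any colour of $R_a$ with $a$ a back-neighbour of $b$.
    \item For $a\in A$, let $S_a$ be the colours of $R_a$ that are not matched to any colour of $S_b$ with $b$ a back-neighbour of $a$.
\end{enumerate}
Bipartiteness is exactly what makes this two-round resolution consistent. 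Every edge joins $A$ and $B$, so each conflict is resolved by the later endpoint, and it is resolved against a set already fixed in the previous round. A direct check shows that $S=\bigcup_v S_v$ is independent in $H$.

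The expected sizes are $\mathbb{E}|S_b|\approx q(1-p)^d$ and $\mathbb{E}|S_a|\approx qp\,(1-\text{(typical density of }S_b))^d$. Since $q\to\infty$ with $d$ fixed, Chernoff together with a union bound over the $n$ vertices (or, more simply, an averaging argument producing a fractional solution) turns these expectations into the required deterministic lower bounds.

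The main obstacle is balancing the two sides. With the naive choice above, $B$ gets density $x=(1-p)^d\approx e^{-pd}$, while $A$ gets roughly $p\,e^{-dx}$. No single $p$ makes both of these $(1-o(1))\frac{\log d}{d}$.

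The first fix I would try is to thin $B$ as well. Each $b$ keeps only a random $s$-fraction of its surviving colours, with $s$ chosen so that $x s\approx\frac{\log d}{d}$, while $p\approx\frac{\log d}{d}$. Then $A$'s survival factor becomes $(1-xs)^d\approx e^{-\log d}$, which is still too lossy. So I would instead symmetrise: run the scheme once with the roles of $A$ and $B$ swapped, and average the two resulting independent sets, weighting each colour's probability of inclusion. If that still falls short by a constant factor, the fallback is a multi-round nibble. In it, each side alternately activates a small fraction $\frac{\varepsilon\log d}{d}$ of its colours and the later endpoints delete conflicts. The analysis tracks that, after $t$ rounds, each list retains a $(1-o(1))$ fraction of its uncoloured part while the coloured part grows by about $\varepsilon\frac{\log d}{d}$ per round, up to the threshold $\frac{\log d}{d}$.

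The part I expect to require the real work is the choice of activation probabilities in this iterated scheme. The point is to prove that the loss factor per vertex is $d^{-o(1)}$ rather than a constant power of $d$.
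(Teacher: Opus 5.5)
The paper only cites this result (from Bernshteyn--Kostochka--Zhu) and gives no proof, so I am judging your proposal on its own terms. As written it is not a proof, and the parts that go beyond a sketch contain an error.

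\textbf{The averaging steps are not allowed.} An $(\eta,\mathcal{H})$-coloring is a \emph{single} independent set of $H$ that meets every list in at least $\eta q$ colours. Your symmetrisation (``average the two resulting independent sets'') and your alternative ``averaging argument producing a fractional solution'' do not fit this definition. The union of two independent sets of $H$ need not be independent. Picking one of them at random does not give every vertex $\eta q$ colours simultaneously, so no concentration argument can rescue it. If averaging were allowed you would need no randomness at all: the independent sets $\bigcup_{a\in A}L(a)$ and $\bigcup_{b\in B}L(b)$ would give $\chiDP(G)\le 2$ for every bipartite graph. That contradicts the lower bound $\chiDP(G)\ge D/(2\log D)$ in terms of maximum average degree $D$.

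\textbf{The nibble does not address the real obstacle.} The multi-round nibble is left unanalysed. In any scheme where the later endpoint yields, you must show that the part of $L(v)$ not blocked by $\{S_u : u\in N_b(v)\}$ has size at least about $q\,e^{-(1+o(1))\eta d}$. That quantity exceeds $\eta q$ precisely when $\eta\le(1-o(1))\log d/d$. In other words, the $d$ blocked sets must overlap at least as much as independent random sets of density $\eta$. Adversarial matchings, combined with correlations through common back-neighbours (bipartite graphs have $4$-cycles), can spoil this. Triangle-freeness alone, via this paper's entropy/martingale analysis, gives only $(4+o(1))\frac{d}{\log d}$.

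\textbf{The missing idea is a correlation inequality, and one pass suffices.}
\begin{enumerate}
    \item Process the vertices in the degeneracy order. A colour $c\in L(v)$ enters $S$ if and only if an independent coin $C_c\sim\mathrm{Ber}(\pi_v)$ succeeds and no $H$-neighbour of $c$ in an earlier list lies in $S$.
    \item Set $\pi_v:=\eta q/\mu_v$, where $\mu_v$ is the expected number of unblocked colours in $L(v)$. This quantity is deterministic and computable along the order, and it gives $\mathbb{E}|S\cap L(v)|=\eta q$ exactly.
    \item $H$ is bipartite with sides $\bigcup_{a\in A}L(a)$ and $\bigcup_{b\in B}L(b)$. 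Replace $C_c$ by $1-C_c$ on the $B$-side. Induction along the order then shows that $\mathbf{1}\{c\in S\}$ is increasing for $c$ on the $A$-side and decreasing for $c$ on the $B$-side.
    \item All back-neighbours of a colour lie on one side, so Harris's inequality gives $\Pr[c\text{ unblocked}]\ge\prod_{c'\in N_b(c)}(1-\Pr[c'\in S])$.
    \item For each back-neighbour $u$ we have $\Pr[c'\in S]\le\pi_u$ for every $c'\in L(u)$, and $\sum_{c'\in L(u)}\Pr[c'\in S]=\eta q$. Jensen's inequality then yields $\mu_v\ge q\exp(-d\eta/(1-\hat\pi))$, provided every earlier $\pi_u\le\hat\pi$.
    \item Take $\eta=(1-\delta)\frac{\log d}{d}$ and $\hat\pi=\delta/2$. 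Then $\pi_v\le d^{-\delta/2}\log d\le\hat\pi$ for large $d$, which closes the induction.
    \item Concentration as $q\to\infty$ follows from a bounded-differences (Azuma) argument with Lipschitz constant $n^n$, as in the paper's concentration lemma. Plain Chernoff does not apply, because the indicators are dependent.
\end{enumerate}
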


As $\mathrm{degen}(G) \le \mathrm{mad}(G) \le 2\mathrm{degen}(G) + 1$, Theorems~\ref{theorem: BKZ MAD lb} and~\ref{theorem: BKZ bipartite ub} imply a constant-factor gap for arbitrary bipartite graphs.
In fact, as $K_{d, d^2}$ is $d$-degenerate and has maximum average degree at least $(2-o(1))d$, the bound in Theorem~\ref{theorem: BKZ bipartite ub} is sharp; Bernshteyn, Kostochka, and Zhu construct an infinite family of bipartite graphs satisfying this (see the discussion following~\cite[Corollary~1.11]{bernshteyn2020fractional}).

\subsection{Main results for locally sparse graphs}\label{subsection: main results}

We are ready to present our main result and its applications to certain locally sparse graph classes. We begin with a DP-coloring analogue of Theorem~\ref{theorem: main ordinary coloring}.

\begin{theorem}\label{theorem: main theorem}
    Let $\eps \in (0, 1)$ be arbitrary.
    There exist $C > 0$ and $d_0 \in \N$ such that for every $d \ge d_0$ and $C \le f \le d^2 + 1$, if $G$ is a $d$-degenerate graph that admits a left $\frac{d^2}{f}$-locally-sparse degeneracy ordering, then
    \[
        \chiDP(G) \le (8 + \eps)\frac{d}{\log f}.
    \]
\end{theorem}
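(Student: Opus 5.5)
We reduce to constructing, for a $q$-fold cover $\cH=(L,H)$ with $q$ large, a random independent set $I\subseteq V(H)$ with $\pr[c\in I]\ge \eta$ for every color $c$, where $\eta^{-1}\le(8+\eps)\frac{d}{\log f}$. This suffices: averaging over the distribution gives $\E|I\cap L(v)|\ge \eta q$ for each $v$, but we need this bound for every vertex simultaneously. So we will instead build an integral $(\eta',\cH)$-coloring by processing the degeneracy ordering $v_1,\dots,v_n$ greedily, in the spirit of the proof of Theorem~\ref{theorem: BKZ bipartite ub}.

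\textbf{Sequential step.} Process $v_1,\dots,v_n$ in order. When $v_i$ is reached, the sets $S_j\subseteq L(v_j)$ with $j<i$ have already been fixed. A color $c\in L(v_i)$ is \emph{blocked} if it corresponds to some chosen color in a back-neighbor's set. We choose $S_i$ to be $\eta q$ unblocked colors, so we need the number of blocked colors to be at most $(1-\eta)q$. With no sparsity this requires $d\eta q\le(1-\eta)q$, giving only $\eta\approx 1/d$. To do better we choose the $S_j$ randomly, with the property that back-neighbors rarely block the same color many times over.

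\textbf{Choosing the sets randomly.} Each $v_j$ takes a uniformly random $\eta q$-subset of its unblocked colors. This is the point where the previously fixed structure matters. We run the random choice and track, for each future vertex $u$ and color $c\in L(u)$, a weight $w(c)$: the probability that $c$ remains unblocked. The plan is to mirror the Martinsson--Steiner entropy/potential argument for Theorem~\ref{theo: martinsson steiner}. We use a potential $\Phi(u)=\sum_{c\in L(u)}\log(\cdots)$, or equivalently $\E$ of the logarithm of the number of unblocked colors. Each back-neighbor $w$ of $u$ reduces the unblocked fraction by a factor of roughly $(1-\eta q/|U(w)|)$ on the colors it matches. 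We need to show that $u$ retains at least an $\eta$-fraction of its list.

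\textbf{Where local sparsity enters.} The loss for $u$ is correlated across back-neighbors $w,w'$ only when $ww'$ is an edge. This is because the choices of $w'$ depend on those of $w$ only through the matching $M_{ww'}$. Since $N^-(u)$ spans at most $d^2/f$ edges, we apply a Jensen/convexity argument (Shearer-type) to the sizes $|U(w)|$. It shows that the typical unblocked fraction at back-neighbors is at least about $f^{-1/2}$-ish, rather than $1/d$. Concretely, we argue that a vertex whose back-neighbors all have unblocked lists of size at least $q\,x$ loses at most $d\eta/x$ in log-measure, up to a correction of order $\frac{d^2}{f}\cdot\frac{\eta^2}{x^2}$ from edges. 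We then optimize $x=f^{-1/2+o(1)}$ and $\eta=(\tfrac18-\eps)\frac{\log f}{d}$. The factor $8$ arises as $2\times 4$: a factor $2$ from $\log\sqrt f$, and the Martinsson--Steiner constant $4$.

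\textbf{Main obstacle.} The hardest step is making the per-vertex guarantee deterministic for every $v$ in a DP-cover, where no symmetric coloring is available. Our plan is to take $q$ huge and use concentration (McDiarmid/Chernoff) to show that the unblocked-list sizes are close to their expectations for all vertices simultaneously, with failure probability $e^{-\Omega(q)}$ times $n$. If this concentration fails, the fallback is a fractional version: replace the set choices by weights directly, as in the LP for $\chiDP$. We would then prove the Martinsson--Steiner recursion for color weights $p(c)$, of the form $p(c)\ge\eta\prod_{c'\sim c}(1-p(c'))$-style inequalities, and use local sparsity to control the product via AM--GM.
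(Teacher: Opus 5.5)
There is a genuine gap. Your process, in which each $v_j$ takes a uniform $\eta q$-subset of its unblocked colors, comes with no estimate that yields the $\log f$ gain, and the one concrete estimate you state cannot yield it.

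\textbf{Your estimate only gives $\eta=O(1/d)$.} You claim that if every back-neighbor $w$ of $u$ has $|U(w)|\ge qx$, then $u$ loses at most $d\eta/x$ in log-measure. This is the worst-case bound, since each unblocked color of $w$ is picked with probability $\eta q/|U(w)|\le \eta/x$. Every vertex may later serve as a back-neighbor, so the hypothesis $|U(u)|\ge qx$ must propagate. That requires $e^{-d\eta/x}\ge x$, i.e.\ $d\eta\le x\log(1/x)\le 1/e$, which gives only $\eta=O(1/d)$. With your choice $x=f^{-1/2+o(1)}$, the loss $d\eta/x$ is of order $f^{1/2-o(1)}\log f$, far above the budget $\log(1/x)\approx\frac12\log f$.

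\textbf{What a logarithmic gain requires.} You must charge blocking at its \emph{average} rate $\eta$, while individual colors may carry much larger rates. That means controlling (a) the heterogeneity of the per-color probabilities and (b) their correlations. For (b) you assert that losses at $u$ are correlated only across edges $ww'$ of $N_b(u)$, and this is false. Two back-neighbors with a common earlier neighbor (a $4$-cycle, present even in triangle-free graphs) make dependent choices. Moreover, in your rule the pick probability $\eta q/|U(w)|$ depends on the global random quantity $|U(w)|$. So the Jensen/Shearer step and the decomposition $8=2\times 4$ are asserted, not derived. The fallback in your last paragraph states only the shape of an inequality, with no update rule that would make it close.

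\textbf{How the paper fills this.}
\begin{enumerate}
\item Colors are included by independent Bernoulli trials with adaptive weights $p_i(c)$, initially $\alpha\approx 2\eta$. When a back-neighboring color $c''$ is processed, $p(c)$ becomes $0$ if $c''$ is chosen and $p(c)/(1-p(c''))$ otherwise. Hence each $p_i(c)$ is a martingale, and $\E[\sum_{c\in L(v_k)}p_{k-1}(c)]=\alpha q$ regardless of correlations.
\item Colors whose weight exceeds the cap $\hp=f^{1/2-\eps/40}/d$ are frozen and excluded. Their expected total weight is at most the expected drop of the entropy $-\sum_c p(c)\log p(c)$, divided by $\log(\hp/\alpha)\approx\frac12\log f$.
\item The entropy drop is bounded by the energies $\E[p(c)p(c')]$ over back-neighboring color pairs. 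A supermartingale argument shows $p(c)p(c')$ is an exact martingale except at steps where $c$ and $c'$ share a neighbor color in the current list. Each such step is a triangle through an edge of $N_b(v_k)$ and costs at most $2\hp^2\alpha$.
\item Left $\frac{d^2}{f}$-local sparsity bounds this correction by $2\alpha q f^{-\eps/20}$, negligible against the main term $\alpha^2 dq$. Requiring frozen weight $\le\alpha q/2$ then forces $\alpha d\lesssim\frac14\log f$, whence $\eta=\alpha/2\approx\frac{\log f}{8d}$.
\end{enumerate}

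Your concentration plan ($q\to\infty$ with bounded differences) matches the paper's, where each trial $a'_c$ influences at most $n^n$ inclusion indicators via forward paths. It only helps, however, once the expectation bound is in place.
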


Since $\chi_f(G) \le \chiDP(G)$, Theorem~\ref{theorem: main theorem} immediately implies Theorem~\ref{theorem: main ordinary coloring}. Furthermore, setting $f = d^2 + 1$ extends Theorem~\ref{theo: martinsson steiner} to the DP-coloring setting. Additionally, in light of Theorem~\ref{theorem: BKZ MAD lb}, Theorem~\ref{theorem: main theorem} is tight up to a constant factor.

In the remainder of this section, we discuss several consequences of our main result. First, we consider $K_{1, t, t}$-free graphs, a well-studied graph class~\cite{AKSConjecture, anderson2024coloring, anderson2025coloring, DKPS, vu2002general}. If $G$ is $K_{1, t, t}$-free, the neighborhood of each vertex in $G$ does not contain $K_{t, t}$ as a subgraph. The celebrated \hyperref[theo:KST]{K\H{o}v\'ari--S\'os--Tur\'an Theorem} implies that such neighborhoods contain relatively few edges:

\begin{theorem}[{K\H{o}v\'ari--S\'os--Tur\'an Theorem~\cite{KovariSosTuran}}]\label{theo:KST}
    Let $G$ be a bipartite graph with parts $A$ and $B$ of sizes $m$ and $n$, respectively. If $G$ contains no subset $X \subseteq A$ of size $t$ and no subset $Y \subseteq B$ of size $s$ for which $G[X \cup Y] \cong K_{s,t}$, then
    \[
        e(G) < (t-1)^{1/s} n m^{1-1/s} + (s-1) m.
    \]
\end{theorem}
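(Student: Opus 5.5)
The statement is the Kővári--Sós--Turán Theorem. The plan is the classical double count of stars $K_{1,s}$ with centre in $A$ and leaves in $B$. Here $|A| = m$, $|B| = n$, and the hypothesis says that no $t$ vertices of $A$ and $s$ vertices of $B$ span a complete bipartite graph. This convention fixes which side the exponent $1/s$ attaches to: we count $s$-subsets of $B$ covered by neighbourhoods of vertices in $A$.

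\textbf{Counting pairs.} Let $T$ be the number of pairs $(a, Y)$ with $a \in A$, $Y \subseteq N(a) \cap B$ and $|Y| = s$. Counting by $Y$: each $s$-set $Y \subseteq B$ lies in the neighbourhoods of at most $t-1$ vertices of $A$. Otherwise $t$ such vertices together with $Y$ would form a forbidden $K_{s,t}$. Hence
\[
T \le (t-1)\binom{n}{s}.
\]
Counting by $a$:
\[
T = \sum_{a \in A}\binom{d(a)}{s},
\]
and $\sum_{a \in A} d(a) = e(G)$.

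\textbf{Jensen step.} Extend $x \mapsto \binom{x}{s}$ by zero for $x < s-1$, which keeps it convex. Jensen then gives
\[
m\binom{e/m}{s} \le (t-1)\binom{n}{s}, \qquad e = e(G).
\]
Suppose $e/m \ge s-1$; otherwise $e < (s-1)m$ and we are done. Use the bounds
\[
\binom{x}{s} \ge \frac{(x-s+1)^s}{s!}, \qquad \binom{n}{s} \le \frac{n^s}{s!}.
\]
These give $m(e/m - s + 1)^s \le (t-1)n^s$, so
\[
\frac{e}{m} - (s-1) \le (t-1)^{1/s} n m^{-1/s}.
\]
Multiplying by $m$ yields
\[
e \le (t-1)^{1/s} n m^{1-1/s} + (s-1)m.
\]

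\textbf{Strict inequality.} This is the only delicate point; the rest is routine bookkeeping. When $s \ge 2$ and $n \ge 1$, the bound $\binom{n}{s} < n^s/s!$ is strict. If $s = 1$, the hypothesis forces every vertex of $B$ to have degree at most $t-1$, so $e \le (t-1)n$. Strictness then requires either excluding the equality case or tracking the nonzero $(s-1)m$ slack. I would handle these degenerate cases ($s=1$, $t=1$, $n<s$) separately. In the main case I would use the strict estimate $\binom{x}{s} > (x-s+1)^s/s!$ for $x > s-1$, $s \ge 2$. The trivial cases give $e = 0$, which is below the positive right-hand side whenever $m, n \ge 1$.
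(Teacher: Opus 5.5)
The paper only quotes this theorem and gives no proof. The core of your proposal, double counting pairs $(a,Y)$ and applying Jensen to the convex extension of $x\mapsto\binom{x}{s}$, is the classical argument and is correct. Your orientation ($s$-sets in $B$, $t$ vertices in $A$) is the one that produces $(t-1)^{1/s}nm^{1-1/s}+(s-1)m$.

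\textbf{The gap is in your last paragraph.} The degenerate cases do not give $e=0$. In them the strict inequality is actually false, so no separate treatment can rescue it.
For $s=1$, the hypothesis only says $d(b)\le t-1$ for every $b\in B$. Then $e=(t-1)n$ is attained: join every vertex of $B$ to a fixed $(t-1)$-subset of $A$, assuming $m\ge t-1$. The right-hand side is exactly $(t-1)n$, because $(s-1)m=0$, so there is no slack to track.
For $t=1$, the hypothesis says $d(a)\le s-1$ for every $a\in A$. Then $e=(s-1)m$ is attained once $n\ge s-1$, and the right-hand side is exactly $(s-1)m$.
Concretely, a single edge with $m=n=1$ and $(s,t)\in\{(1,2),(2,1)\}$ satisfies the hypothesis and has $e(G)=1$, equal to the bound.

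\textbf{What your argument does establish.} It proves the non-strict inequality for all $s,t\ge 1$, and the strict one when $s,t\ge 2$ and $m,n\ge 1$. In that range no case split is needed.
For $s\ge 2$ and $n\ge 1$ one has $\binom{n}{s}<n^s/s!$, including $n<s$, where $\binom{n}{s}=0$. Multiplying by $t-1>0$ preserves strictness, so $m(e/m-s+1)^s<(t-1)n^s$ whenever $e/m\ge s-1$. If instead $e/m<s-1$, you get $e<(s-1)m$ directly.
Relying on $\binom{x}{s}>(x-s+1)^s/s!$ for $x>s-1$ would leave the boundary $e/m=s-1$ uncovered. That boundary is exactly where the $t=1$ equality case sits.

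So the correct repair is to add the hypothesis $s,t\ge 2$ (or weaken $<$ to $\le$), not to declare the remaining cases trivial. This is harmless for the paper, which uses the bound only in asymptotic form, with $s=t$, for $K_{1,t,t}$-free graphs.
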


Treating $t$ as a constant with respect to $d$, Theorem~\ref{theo:KST} implies that every $d$-degenerate $K_{1, t, t}$-free graph $G$ is left $d^{2 - 1/(2t) + o(1)}$-locally sparse under any degeneracy ordering, yielding the following corollary to Theorem~\ref{theorem: main theorem}:

\begin{corollary}\label{corollary: K1tt}
    Let $t \in \N$ be arbitrary. There exists $d_0 \in \N$ such that for all $d \ge d_0$, if $G = (V, E)$ is a $d$-degenerate $K_{1, t, t}$-free graph, then
    \[
        \chiDP(G) \le (16t + o(1))\frac{d}{\log d}.
    \]
\end{corollary}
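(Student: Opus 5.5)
We deduce the corollary from Theorem~\ref{theorem: main theorem}. The plan is to show that any degeneracy ordering of $G$ is left $\frac{d^2}{f}$-locally-sparse with $f = d^{1/(2t) - o(1)}$. Then $\log f = (\frac{1}{2t} - o(1))\log d$, and the bound $(8+\eps)\frac{d}{\log f}$ becomes $(16t + o(1))\frac{d}{\log d}$, since $\eps$ may be taken arbitrarily small once $d$ is large.

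Fix a degeneracy ordering $(v_1,\dots,v_n)$ and a vertex $v_i$. Let $B = N(v_i)\cap\{v_1,\dots,v_{i-1}\}$, so $|B| \le d$. Since $G$ is $K_{1,t,t}$-free, the graph $G[B]$ contains no copy of $K_{t,t}$. To count edges of $G[B]$, we form the auxiliary bipartite graph with parts $A = B$ and $B' = B$ (a second copy), placing an edge between $x \in A$ and $y \in B'$ whenever $xy \in E(G)$. This graph has exactly $2e(G[B])$ edges.

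We claim it contains no $K_{t,t}$ with $X\subseteq A$, $Y \subseteq B'$, $|X|=|Y|=t$. Such $X$ and $Y$ are disjoint as vertex sets of $G$, because no vertex is adjacent to itself. Hence they would give a $K_{t,t}$ in $G[B]$, and together with $v_i$ a $K_{1,t,t}$ in $G$. Applying Theorem~\ref{theo:KST} with $s=t$ and $m=n=|B|\le d$ gives
\[
2e(G[B]) < (t-1)^{1/t} d^{2-1/t} + (t-1)d,
\]
so $e(G[B]) \le C_t\, d^{2-1/t}$ for a constant $C_t$ depending only on $t$. This is stronger than the exponent $2 - 1/(2t)$ mentioned in the text. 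Taking $f = d^{1/t}/C_t$, which exceeds the constant $C$ of Theorem~\ref{theorem: main theorem} and is at most $d^2+1$ for $d$ large, the ordering is left $\frac{d^2}{f}$-locally-sparse. Then
\[
\chiDP(G) \le (8+\eps)\frac{d}{\log f} = (8+\eps)\frac{t\,d}{\log d - t\log C_t},
\]
which is at most $(8t + o(1))\frac{d}{\log d}$.

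This is even better than the claimed bound $(16t + o(1))\frac{d}{\log d}$, which therefore follows a fortiori. The claimed bound is exactly what the weaker exponent $2 - 1/(2t)$ (i.e.\ $f = d^{1/(2t)}$) would give. No step is a real obstacle. The only point needing care is the bipartite doubling trick: we must check that a $K_{t,t}$ in the doubled graph really yields disjoint sets in $G$, which holds because $G$ has no loops. We must also pass from the fixed $\eps$ of Theorem~\ref{theorem: main theorem} to an $o(1)$ term, by letting $\eps \to 0$ and taking $d_0 = d_0(\eps, t)$ accordingly.
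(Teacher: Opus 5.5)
Your proof is correct and follows the paper's route: apply the K\H{o}v\'ari--S\'os--Tur\'an theorem to each back-neighborhood, conclude that an arbitrary degeneracy ordering is left locally sparse, then invoke Theorem~\ref{theorem: main theorem} and let $\varepsilon \to 0$. Your doubling argument in fact yields the sharper exponent $2 - 1/t$, whereas the paper states the valid but looser $d^{2-1/(2t)+o(1)}$, so you obtain $(8t + o(1))\frac{d}{\log d}$, which implies the stated $16t$ bound (for $t=1$ take any positive $C_t$, since back-neighborhoods are then edgeless).
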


Alon, Krivelevich, and Sudakov~\cite{AKSConjecture} showed in 1999 that $K_{1, t, t}$-free graphs satisfy $\chi(G) = O\left(\frac{t\Delta}{\log \Delta}\right)$ as a corollary to Theorem~\ref{theo:AKS}, and famously conjectured that an analogous bound holds when $K_{1, t, t}$ is replaced by $K_t$ (with a potentially worse dependence on $t$). Recently, Methuku, Janzer, and the first author~\cite{Kttt} established that $\chi(G) = O\left(\frac{t^6\Delta}{\log \Delta}\right)$ for $K_{t, t, t}$-free graphs, marking the first progress toward this conjecture since it was posed. A key step in their argument shows that every $K_{t, t, t}$-free graph admits a left locally sparse vertex ordering:

\begin{lemma}[{Dhawan, Methuku, and Janzer~\cite{Kttt}}]\label{lemma: kttt}
    Let $G$ be a $K_{t, t, t}$-free graph with maximum degree $\Delta$. Then $G$ admits a left $\Delta^{2 - 1/(2t^2) + o(1)}$-locally-sparse vertex ordering.
\end{lemma}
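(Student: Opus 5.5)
The statement to prove is Lemma~\ref{lemma: kttt}: every $K_{t,t,t}$-free graph $G$ of maximum degree $\Delta$ admits a left $\Delta^{2-1/(2t^2)+o(1)}$-locally-sparse ordering. I would build the ordering greedily from the back. Fix a threshold $k = \Delta^{2-1/(2t^2)+o(1)}$. Starting from $G_n = G$, at each step choose a vertex $v$ of the current graph $G_i$ such that $G_i[N_{G_i}(v)]$ has at most $k$ edges. Place $v$ in the last remaining position, delete it, and repeat. The back-neighbourhood of $v$ in the resulting ordering is exactly $N_{G_i}(v)$, so the ordering is left $k$-locally-sparse. It therefore suffices to show the following: every nonempty induced subgraph $G'$ of $G$ contains a vertex whose neighbourhood in $G'$ spans at most $k$ edges. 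Since $G'$ is again $K_{t,t,t}$-free with maximum degree at most $\Delta$, the claim reduces to showing that every nonempty $K_{t,t,t}$-free graph $G$ of maximum degree $\Delta$ has such a vertex.

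To prove the claim, suppose for contradiction that every vertex $v$ has $e(G[N(v)]) > k$. I would count triangles and then "$K_{1,t,t}$-type" configurations. Each vertex lies in more than $k$ triangles, so the triangle count is large. By averaging, a typical edge $uv$ lies in many triangles, giving codegree $|N(u)\cap N(v)| \gtrsim k/\Delta = \Delta^{1-1/(2t^2)}$ on average. Next I would apply a supersaturated version of Theorem~\ref{theo:KST} inside each neighbourhood $N(v)$. A graph on at most $\Delta$ vertices with more than $k$ edges contains many copies of $K_{t,t}$: by dependent random choice or the standard convexity count, roughly $\Delta^{2t}(k/\Delta^2)^{t^2}$ of them. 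Each such copy together with $v$ forms a $K_{1,t,t}$. The goal is to upgrade this to a $K_{t,t,t}$. For a fixed $K_{t,t}$ with vertex set $X\cup Y$, the common neighbourhood of $X\cup Y$ contains every $v$ that completes it to a $K_{1,t,t}$. Summing over $v$ and averaging over copies of $K_{t,t}$, some copy must have at least $t$ common neighbours, yielding $K_{t,t,t}$. This requires the number of pairs $(v,\text{copy})$ to exceed $(t-1)$ times the number of $K_{t,t}$ copies in $G$.

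The main obstacle is bounding the total number of $K_{t,t}$ copies in $G$ from above in terms of $n$ and $\Delta$, so that the averaging succeeds. The crude bound $n\Delta^{2t-1}$, obtained by choosing one vertex and then everything else within distance two, is too weak. I would instead count copies of $K_{t,t}$ that lie inside some neighbourhood more carefully, by counting $K_{1,t,t}$'s rooted at an arbitrary vertex. Applying Theorem~\ref{theo:KST} twice, first to the bipartite graph between $N(v)$ and its second neighbourhood and then to the link of a $t$-set, and balancing exponents, should produce the exponent $1/(2t^2)$: one factor of $1/(2t)$ arises at each of the two nested KST applications. If this direct approach falls short, the fallback is dependent random choice. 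Find a set $U\subseteq N(v)$ of size at least $\Delta^{1-o(1)}$ in which every $t$-subset has many common neighbours inside $N(v)$. Then iterate within those common neighbourhoods to locate the third part of the $K_{t,t,t}$. The $o(1)$ in the exponent absorbs the polylogarithmic and $t$-dependent losses.
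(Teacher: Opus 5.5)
The paper does not prove this lemma; it imports it from the work of Dhawan, Methuku, and Janzer. So your proposal has to stand on its own.

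\textbf{What is correct.} Your reduction is right. If you build the ordering from the back, it suffices that every nonempty $K_{t,t,t}$-free graph of maximum degree at most $\Delta$ has a vertex whose neighbourhood spans at most $k$ edges. Your double count is also the right tool: count pairs $(v,K)$ with $K$ a copy of $K_{t,t}$ inside $N(v)$, and compare with $(t-1)$ times the number of copies of $K_{t,t}$ in $G$.

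\textbf{The gap.} You then declare the crude bound $n\Delta^{2t-1}$ on the number of copies of $K_{t,t}$ ``too weak''. In its place you offer an unverified ``two nested KST applications, balancing exponents'' heuristic and a vague dependent-random-choice fallback. As written, nothing finishes the proof. The crude bound is in fact more than enough, and your argument closes once you redo the arithmetic.

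\textbf{The arithmetic.} Count ordered tuples $(x_1,\dots,x_t,y_1,\dots,y_t)$ with all $x_iy_j\in E$. Take $F=G[N(v)]$ with $m=|V(F)|\le\Delta$ and $e(F)>k=\Delta^{2-\epsilon}$.
\begin{itemize}
\item Write $\hom(K_{t,t},F)=\sum_{\mathbf{x}}|N(x_1)\cap\cdots\cap N(x_t)|^t$ and use $\sum_{\mathbf{x}}|N(x_1)\cap\cdots\cap N(x_t)|=\sum_y \deg(y)^t$. Two applications of Jensen give $\hom(K_{t,t},F)\ge (2e(F))^{t^2}m^{2t-2t^2}$.
\item Since the exponent of $m$ is nonpositive and $m\le\Delta$, this is at least $2^{t^2}\Delta^{2t-\epsilon t^2}$.
\item At most $t(t-1)\Delta^{2t-1}$ of these maps are non-injective.
\item $G$ contains at most $n\Delta^{2t-1}$ injective tuples: choose $x_1$, then $y_1,\dots,y_t\in N(x_1)$, then $x_2,\dots,x_t\in N(y_1)$.
\item If no tuple has $t$ common neighbours, the number of pairs is at most $(t-1)n\Delta^{2t-1}$.
\end{itemize}
Hence $K_{t,t,t}$-freeness would force
\[
2^{t^2}\Delta^{2t-\epsilon t^2}-t(t-1)\Delta^{2t-1}\le (t-1)\Delta^{2t-1}.
\]
This fails whenever $\epsilon t^2\le 1$, because $2^{t^2}>t^2-1$.

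\textbf{Consequences.} With $\epsilon=1/(2t^2)$ you have a spare factor of $\Delta^{1/2}$. The same computation already yields $k=\Delta^{2-1/t^2}$, which is stronger than the stated bound. The nested KST argument and dependent random choice are therefore unnecessary.

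\textbf{One detail to add.} If $X\cup Y$ spans a $K_{t,t}$ with $t$ common neighbours $Z$, then $Z$ is automatically disjoint from $X\cup Y$, since there are no loops. So $X,Y,Z$ span a $K_{t,t,t}$.
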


Since every graph $G$ is $\Delta(G)$-degenerate, combining Theorem~\ref{theorem: main theorem} with Lemma~\ref{lemma: kttt} yields the following bound on the fractional DP-chromatic number:

\begin{corollary}\label{corollary: Kttt}
    Let $t \in \N$ be arbitrary. There exists $\Delta_0 \in \N$ such that for all $\Delta \ge \Delta_0$, if $G = (V, E)$ is a $K_{t, t, t}$-free graph with maximum degree $\Delta$, then
    \[
        \chiDP(G) \le (16t^2 + o(1))\frac{\Delta}{\log \Delta}.
    \]
\end{corollary}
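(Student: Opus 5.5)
We obtain Corollary~\ref{corollary: Kttt} by direct substitution into Theorem~\ref{theorem: main theorem}, with Lemma~\ref{lemma: kttt} supplying the required ordering. The one point needing care is that Theorem~\ref{theorem: main theorem} asks for a left locally sparse \emph{degeneracy} ordering, whereas Lemma~\ref{lemma: kttt} gives an arbitrary vertex ordering. We handle this by taking $d = \Delta$. In any vertex ordering each vertex has at most $\Delta$ neighbors in total, and hence at most $\Delta$ back-neighbors. So every ordering of $V(G)$ is a degeneracy ordering witnessing that $G$ is $\Delta$-degenerate. In particular, the ordering from Lemma~\ref{lemma: kttt} qualifies.

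Next we set the parameter $f$. Lemma~\ref{lemma: kttt} says each back-neighborhood spans at most $\Delta^{2 - 1/(2t^2) + o(1)}$ edges. Choose
\[
f \coloneqq \Delta^{1/(2t^2) - o(1)},
\]
with the same $o(1)$ term, so that $\Delta^2/f = \Delta^{2-1/(2t^2)+o(1)}$. With this choice the ordering is left $\frac{d^2}{f}$-locally-sparse for $d=\Delta$. We must also check the range $C \le f \le d^2+1$. Since $t$ is fixed, $f \to \infty$ as $\Delta\to\infty$, so $f \ge C$ once $\Delta$ is large. Trivially $f \le \Delta^2 + 1$; if needed we cap $f$ at $\Delta^2+1$, which only weakens the sparsity hypothesis.

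Now apply Theorem~\ref{theorem: main theorem} with an arbitrary $\eps$ and $\Delta \ge d_0$:
\[
\chiDP(G) \le (8+\eps)\frac{\Delta}{\log f} = (8+\eps)\frac{\Delta}{(1/(2t^2) - o(1))\log \Delta} = (16t^2 + O(t^2\eps) + o(1))\frac{\Delta}{\log\Delta}.
\]
Since $\eps$ is arbitrary, this gives the stated $(16t^2+o(1))\frac{\Delta}{\log \Delta}$.

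The only mildly delicate step is quantifier bookkeeping. The $o(1)$ in Lemma~\ref{lemma: kttt} depends on $t$, and the $C$ and $d_0$ of Theorem~\ref{theorem: main theorem} depend on $\eps$. To make the final $o(1)$ genuinely tend to $0$, we let $\eps \to 0$ slowly as $\Delta \to \infty$, or equivalently state the bound for each fixed $\eps$ with $\Delta_0 = \Delta_0(t,\eps)$. There is no real obstacle beyond this.
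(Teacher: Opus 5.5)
Your proposal is correct and is essentially the paper's argument: every ordering of a graph with maximum degree $\Delta$ is a $\Delta$-degeneracy ordering, so you apply Theorem~\ref{theorem: main theorem} with $d=\Delta$ and $f=\Delta^{1/(2t^2)-o(1)}$ to the ordering from Lemma~\ref{lemma: kttt}, and then let $\varepsilon\to 0$. The paper states this in one line; your checks of the range $C\le f\le d^2+1$ and of the quantifier bookkeeping supply the details it leaves implicit.
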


Notably, this improves the bound on $\chi_f(G)$ implied by $\chi(G) = O\left(\frac{t^6\Delta}{\log \Delta}\right)$ by a factor of $\Theta(t^4)$.

\subsection{Notation}\label{subsection: notation}

Throughout the rest of the paper, we use the following standard notation.
We adopt the convention $0\log 0 = 0$, where $\log$ denotes the natural logarithm.
For a nonnegative integer $n$, we write $[n] \coloneqq \{1, \dots, n\}$, with $[0] \coloneqq \emptyset$.
We let $\bone{\mathcal{A}}$ denote the indicator function of an event $\mathcal{A}$.
For $p \in [0, 1]$, $\mathrm{Ber}(p)$ denotes the Bernoulli distribution on $\{0,1\}$ with parameter $p$.

For a graph $G$, its vertex and edge sets are denoted by $V(G)$ and $E(G)$, respectively.
For a vertex $v \in V(G)$, $N_G(v) \coloneqq \{u \in V(G)\,:\,uv \in E(G)\}$ denotes the neighborhood of $v$, and $\deg_G(v) \coloneqq |N_G(v)|$ denotes its degree; we drop the subscript $G$ when the underlying graph is clear from context.
For a subset $U \subseteq V(G)$, $G[U]$ denotes the subgraph of $G$ induced by $U$.

Given an ordering $(v_1, \ldots, v_n)$ of $V(G)$, we let $N_b(v_i)$ denote the set of neighbors $v_j$ of $v_i$ such that $j < i$ (the \emph{back-neighbors} of $v_i$).
Similarly, we let $N_f(v_i)$ denote the set of neighbors $v_k$ of $v_i$ such that $k > i$ (the \emph{front-neighbors} of $v_i$).
Given a DP-cover $\cH = (L, H)$ of $G$ and a vertex $c \in V(H)$, we let $N_b(c) \coloneqq \{c' \in N(c)\,:\,L^{-1}(c') \in N_b(L^{-1}(c))\}$; we define $N_f(c)$ similarly. Lastly, we let $N^{(j)}(c) \coloneqq N_H(c) \cap L(v_j)$ (note that $|N^{(j)}(c)| \leq 1$).

\subsection{Proof overview}\label{subsection: overview}

In this section, we provide an informal overview of our proof techniques.
We first briefly discuss the construction for Theorem~\ref{theo: lower bound}, and dedicate the remainder of the section to describing our main upper bound result, Theorem~\ref{theorem: main theorem}.

\subsubsection*{Lower Bound}

The proof of Theorem~\ref{theo: lower bound} is inspired by a construction of Alon, Krivelevich, and Sudakov~\cite{AKSConjecture}, which shows that Theorem~\ref{theo:AKS} is tight up to the leading constant.
Indeed, their construction yields a valid lower bound in our setting as well, though with a significantly weaker leading constant.
Crucially, our argument relies on a recent result of Allen, Noel, and the first author~\cite{high_girth}, who established the existence of $d$-degenerate triangle-free graphs with $\chi_f(G) \ge (1-o(1))\frac{d}{\log d}$ (Theorem~\ref{theo: lower bound degen girth}).
For $f \ge d^{1 - o(1)}$, this construction directly applies.
For smaller values of $f$, we let $H$ be an $f$-degenerate triangle-free graph with $\chi_f(H) \ge (1-o(1))\frac{f}{\log f}$ as guaranteed by Theorem~\ref{theo: lower bound degen girth}, and construct $G$ from $H$ by replacing each vertex of $H$ with a clique $K_t$ for $t \approx d/f$.
It is straightforward to derive a $d$-degenerate $\frac{d^2}{f}$-locally-sparse vertex ordering of $G$ from an $f$-degenerate ordering of $H$.
Furthermore, since $G = H \cdot K_t$ (where $\cdot$ denotes the lexicographic graph product), it follows that $\chi_f(G) = \chi_f(H)\chi_f(K_t) = t\chi_f(H)$, completing the proof.

\subsubsection*{Upper Bound}
We record the following observation in light of Definition~\ref{def: chiDP}:

\begin{obs}\label{obs: fractional coloring}
    Let $G$ be a graph and let $\eta \in [0, 1]$. If there exists $q \in \N$ such that $G$ admits an $(\eta, \cH)$-coloring for every $q$-fold DP-cover $\cH$, then $\chiDP(G) \le \eta^{-1}$.
\end{obs}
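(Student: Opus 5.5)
This observation follows directly from Definition~\ref{def: chiDP}, so the plan is simply to unpack the definitions of $\theta_{\mathrm{DP}}$ and $\chiDP$. Fix $q \in \N$ as in the hypothesis. We may assume $q \ge 1$, since $\theta_{\mathrm{DP}}$ is defined for $q \in \N^+$; if $\eta = 0$ the conclusion $\chiDP(G) \le \infty$ is vacuous, so also assume $\eta > 0$.

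The first step is to show $\eta \le \theta_{\mathrm{DP}}(G,q)$. By hypothesis, $\eta$ lies in the set
\[
    \bigl\{\eta' \in [0,1]\,:\,G \text{ admits an } (\eta',\cH)\text{-coloring for every } q\text{-fold DP-cover } \cH\bigr\},
\]
so it is at most the maximum of that set.

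One should check that this maximum is attained (rather than being a supremum). For a $q$-fold cover, the condition $|S \cap L(u)| \ge \eta' q$ depends only on $\lceil \eta' q\rceil$. Hence the set is a union of closed intervals of the form $[0, k/q]$, and its maximum exists. In any case, only the inequality $\eta \le \sup$ is needed.

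The second step takes reciprocals. Since $\theta_{\mathrm{DP}}(G,q) \ge \eta > 0$, we get
\[
    \chiDP(G) \le \theta_{\mathrm{DP}}(G,q)^{-1} \le \eta^{-1},
\]
because $\chiDP(G)$ is an infimum over all $q \in \N^+$.

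There is no real obstacle here. The only point needing care is the degenerate cases $q = 0$ and $\eta = 0$, which are handled by the conventions noted above.
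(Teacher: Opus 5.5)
Your proposal is correct and is exactly the definitional unpacking the paper intends; the paper records the observation without proof, ``in light of Definition~\ref{def: chiDP}''. Since $\eta$ lies in the set whose maximum is $\theta_{\mathrm{DP}}(G,q)$, we get $\chiDP(G)\le\theta_{\mathrm{DP}}(G,q)^{-1}\le\eta^{-1}$. One wording point: $q\ge 1$ is not a without-loss-of-generality assumption but the required reading of the hypothesis, because for $q=0$ the empty set is an $(\eta,\cH)$-coloring of the (empty) $0$-fold cover for every $\eta\in[0,1]$, so the statement with $q=0$ would give $\chiDP(G)\le 1$ for every graph.
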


In our proof, we take advantage of this observation.
In particular, we will show that the conditions of Observation~\ref{obs: fractional coloring} are satisfied for
\begin{equation}\label{eq: overview eta q0}
    \eta = (1 - o(1))\frac{\log f}{8d} \qquad \text{and} \qquad q_0 = n^{3n}.
\end{equation}
To this end, we design an algorithm that takes as input a $d$-degenerate graph $G$, a left $\frac{d^2}{f}$-locally-sparse degeneracy ordering $(v_1, \dots, v_n)$ of $V(G)$, and a DP-cover $\cH = (L, H)$ of $G$, and outputs an independent set $S \subseteq V(H)$.
We will argue that, with high probability, $S$ is an $(\eta, \cH)$-coloring provided that $\cH$ is $q$-fold for $q \ge q_0$.

Let us now describe our procedure.
The algorithm is inspired by recent work of the first author~\cite{dhawan2026fractional}, which provided an alternate proof and several extensions of Martinsson and Steiner's result (Theorem~\ref{theo: martinsson steiner}).\footnote{We note that the argument of Martinsson and Steiner can be adapted to prove a fractional DP-coloring version of Theorem~\ref{theo: martinsson steiner}. However, their approach fails for locally sparse graphs due to the nonlinear nature of their activation probabilities; see~\cite[pp.~1--2]{dhawan2026fractional} for a detailed discussion.}
We tailor the algorithm to the DP-coloring setting:
\begin{enumerate}[label=(\arabic*)]
    \item Assign each color $c \in V(H)$ an initial weight $p_0(c) = \alpha \approx 2\eta$. At step $i$, for each $c \in L(v_i)$, include $c \in S$ with probability $p_{i-1}(c)$.
    
    \item If $c \in S$, set $p_i(c') = 0$ for all front-neighbors $c' \in N_f(c)$ to maintain independence. If $c \notin S$, increase $p_i(c')$ to compensate and boost the future selection probability of $c'$.
    
    \item To ensure $p_i(c') \le 1$, weights exceeding a threshold $\hp$ are marked as \textit{bad} and excluded from $S$. Our updates preserve the conditional expectation $\E[p_i(c') \mid p_{i-1}(\cdot)] = p_{i-1}(c')$.
    Letting $B_i(v_j) \subseteq L(v_j)$ denote the set of bad colors in $L(v_j)$ after the $i$-th iteration, we conclude
    \[
    \E[|S \cap L(v_i)|] \gtrsim \alpha|L(v_i)| - \hp\E[|B_{i-1}(v_i)|].
    \]
\end{enumerate}

It now suffices to argue that (i) $\E[|B_{i-1}(v_i)|] \lesssim \alpha|L(v_i)|/2$, and (ii) the random variable $|S \cap L(v_i)|$ is concentrated.
We employ an entropy-based argument for the former goal.
Given a probability distribution $\cD$ over some finite set $T$, the entropy of $\cD$ is 
\[
H(\cD) = -\sum_{t\in T}\pr_{X \sim\cD}[X = t]\log \pr_{X \sim\cD}[X = t].
\]
Roughly speaking, $H(\cD)$ measures how widely the assignment probabilities vary---the more they vary, the smaller their entropy; $H(\cD)$ is maximized when $\cD$ is the uniform distribution.
We define the \textit{entropy} at $v$ after processing $i$ vertices as
\[
H_i(v) = -\sum_{c \in L(v)}p_i(c)\log p_i(c).
\]
If $\E[H_{i-1}(v_i)]$ is sufficiently large, the values of $p_{i-1}(c)$ do not vary drastically in expectation.
Since $\sum_{c \in L(v)}\E[p_i(c)] = \alpha|L(v)|$, it follows that few of these values reach $\hp$ in expectation.

In our proof, we show that the entropy does not decrease significantly in expectation:
\[
H_0(v_i) - \E[H_{i-1}(v_i)] \lesssim \alpha|L(v_i)|\left(\alpha|N_b(v_i)| + |E(G[N_b(v_i)])|\hp^2\right).
\]
For triangle-free graphs, the right-hand side reduces to $\alpha^2 d|L(v_i)|$ because $E(G[N_b(v_i)]) = \emptyset$ for all $v_i$.
While this no longer holds when triangles are permitted, we leverage $\frac{d^2}{f}$-local-sparsity. By setting $\hp = \frac{f^{1/2 - o(1)}}{d}$, the first term dominates, effectively mitigating the contribution of triangles.

Once we establish that $\E[|S\cap L(v_i)|]$ is sufficiently large, we bound $|S\cap L(v_i)| \gtrsim \eta|L(v_i)|$ with high probability using standard concentration tools.
In particular, we rely on the following classical consequence of Azuma's inequality for Doob martingales:

\begin{theorem}[{\cite[p.~79]{MolloyReed}}]\label{theo: tal}
    Let $X$ be a random variable determined by $s$ independent trials such that changing the outcome of any single trial affects $X$ by at most $\zeta$. Then,
    \[
    \pr[|X - \E[X]| \ge t] \le \exp\left(-\frac{t^2}{2\zeta^2s}\right).
    \]
\end{theorem}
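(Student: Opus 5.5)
This is the bounded-differences consequence of Azuma's inequality applied to the Doob martingale. I will prove it by building the martingale, bounding its increments, and then applying Azuma's inequality.

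Let the independent trials be $\omega_1,\dots,\omega_s$, and write $X = g(\omega_1,\dots,\omega_s)$. Define the Doob martingale $Y_k \coloneqq \E[X \mid \omega_1,\dots,\omega_k]$ for $0 \le k \le s$. Then $Y_0 = \E[X]$ and $Y_s = X$.

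The key step is to show $|Y_k - Y_{k-1}| \le \zeta$ almost surely. By independence of the trials,
\[
Y_k = \E_{\omega'}\bigl[g(\omega_1,\dots,\omega_k,\omega'_{k+1},\dots,\omega'_s)\bigr],
\]
where $\omega'$ is an independent copy of the trials. Similarly,
\[
Y_{k-1} = \E_{\omega'}\bigl[g(\omega_1,\dots,\omega_{k-1},\omega'_k,\omega'_{k+1},\dots,\omega'_s)\bigr].
\]
This uses the same copy $\omega'$, which is legitimate because $\omega'_k$ is independent of everything else. Subtracting, the two integrands differ only in the $k$-th coordinate. By hypothesis they therefore differ by at most $\zeta$ pointwise, and averaging preserves the bound. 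This coupling step is the only delicate point, and it is exactly where independence of the trials is used; for dependent trials the increment bound can fail.

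Next, apply Azuma's inequality to the martingale difference sequence $D_k = Y_k - Y_{k-1}$. Hoeffding's lemma gives $\E[e^{\lambda D_k} \mid \omega_1,\dots,\omega_{k-1}] \le e^{\lambda^2\zeta^2/2}$, since $D_k$ has conditional mean zero and lies in an interval of length at most $2\zeta$. Iterating the conditional expectations yields
\[
\E\bigl[e^{\lambda(X-\E[X])}\bigr] \le e^{\lambda^2\zeta^2 s/2}.
\]
By Markov's inequality with $\lambda = t/(\zeta^2 s)$,
\[
\pr\bigl[X - \E[X] \ge t\bigr] \le \exp\!\left(-\frac{t^2}{2\zeta^2 s}\right).
\]
The same argument applied to $-X$ bounds the lower tail.

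Combining the two tails gives a bound of $2\exp(-t^2/(2\zeta^2 s))$ on $\pr[|X-\E[X]|\ge t]$. The statement as cited omits the factor $2$; this is the standard form in \cite{MolloyReed}. I would either carry the factor $2$ explicitly, which is harmless in all applications, or note that only one-sided tails are needed later. I expect no genuine obstacle beyond the increment coupling described above.
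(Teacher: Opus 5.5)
Your argument is correct and follows the route the paper intends: the paper gives no proof, citing Molloy--Reed and calling the result a classical consequence of Azuma's inequality for Doob martingales, which is what you carry out (the coupling giving $|Y_k-Y_{k-1}|\le\zeta$, then Hoeffding's lemma, then the exponential Markov bound with $\lambda=t/(\zeta^2 s)$). Your remark about the missing factor $2$ is well founded---as literally stated the two-sided bound is false (take $s=1$, $X$ uniform on $\{0,\zeta\}$ and $t=\zeta/2$, where the left side is $1>e^{-1/8}$)---and your one-sided version is all that Lemma~\ref{lemma: main2} needs, since it concerns only a lower-tail event.
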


Concentration arguments are straightforward in ordinary coloring, but DP-coloring introduces dependencies that require additional care. 
Specifically, the events $\{c \in S\}$ and $\{c' \in S\}$ for distinct $c, c' \in L(v)$ are not necessarily independent for arbitrary cover graphs.
Nevertheless, we show that the random variable $|S \cap L(v)|$ satisfies the conditions of Theorem~\ref{theo: tal} with parameters $s = nq$ and $\zeta = n^n$, yielding the desired concentration whenever $q \ge q_0$.

\section{Lower Bound: Proof of Theorem~\ref{theo: lower bound}}\label{section: lower bounds}

In this section, we prove our main lower bound result, Theorem~\ref{theo: lower bound}. For convenience, we restate the result below.

\begin{theorem*}[Restatement of Theorem~\ref{theo: lower bound}]
    Let $\eps \in (0, 1)$ be arbitrary. There exist $C > 0$ and $d_0 \in \N$ such that for every $d \ge d_0$ and $f \ge C$, there exists a $d$-degenerate graph $G$ that admits a left $\frac{d^2}{f}$-locally-sparse degeneracy ordering such that
    \[
    \chi_f(G) \ge (1 - \eps)\frac{d}{\log f}.
    \]
\end{theorem*}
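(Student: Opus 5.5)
We follow the strategy sketched in the proof overview: a blow-up of a high-fractional-chromatic triangle-free degenerate graph by cliques. The key tool is Theorem~\ref{theo: lower bound degen girth} of Allen, Noel, and the first author. For every $\delta>0$ and all sufficiently large $k$, it gives a $k$-degenerate triangle-free graph $H_k$ with $\chi_f(H_k)\ge(1-\delta)\frac{k}{\log k}$. We choose $\delta$ small in terms of $\eps$, and take $C$ and $d_0$ large enough that this theorem applies to every $k$ we use. We split into two regimes depending on how large $f$ is compared with $d$.

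\emph{Large $f$.} Suppose $f\ge d^{1-\delta}$. Note that $f\le d^2+1$ may be assumed; otherwise a left $\frac{d^2}{f}$-locally-sparse ordering means a triangle-free-type condition, which is still covered below. Take $G=H_d$. Any degeneracy ordering of a triangle-free graph has back-neighbourhoods spanning no edges, so it is left $0$-locally-sparse, and in particular left $\frac{d^2}{f}$-locally-sparse. Moreover
\[
\log f\ge(1-\delta)\log d \quad\text{when } f\le d^2+1,
\]
so $\frac{d}{\log d}\ge(1-\delta)\frac{d}{\log f}$ in the range $d^{1-\delta}\le f\le d^{2}+1$. The case $f>d^2+1$ needs more care, since then $\frac{d}{\log f}$ is smaller than $\frac{d}{\log d}$ but the statement only gets weaker. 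Hence $\chi_f(G)\ge(1-\delta)^2\frac{d}{\log f}$, which is at least $(1-\eps)\frac{d}{\log f}$ for small $\delta$.

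\emph{Small $f$.} Suppose $C\le f<d^{1-\delta}$. Set $k=\lfloor f\rfloor$ and $t=\lfloor d/k\rfloor\ge 1$, and let $G=H_k\cdot K_t$ be the lexicographic product, where each vertex becomes a clique $K_t$ and each edge becomes a complete bipartite $K_{t,t}$. Fix a $k$-degeneracy ordering of $H_k$, and order $G$ blockwise along it, in arbitrary order within each clique.

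The back-neighbours of a vertex $(x,i)$ are the earlier vertices of its own clique, at most $t-1$ of them, together with the full cliques of the at most $k$ back-neighbours of $x$ in $H_k$. So $G$ is $(kt+t-1)$-degenerate, and we choose $t=\lfloor d/(k+1)\rfloor$ to make this at most $d$.

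For local sparsity, the back-neighbours of $x$ form an independent set in $H_k$ because $H_k$ is triangle-free. Therefore the back-neighbourhood of $(x,i)$ in $G$ is a disjoint union of at most $k+1$ cliques of size at most $t$, with possibly edges between the own-clique part and nothing else. Each of the at most $k$ outside cliques contributes at most $\binom t2$ edges, and the own-clique part has at most $t-1$ vertices, each adjacent to at most $kt$ outside vertices. This gives
\[
e\le (k+1)\binom t2+(t-1)kt\le \tfrac32 (k+1)t^2\le \tfrac32\,\frac{d^2}{k+1}.
\]
This bound is off from the required $\frac{d^2}{f}$ by a constant factor. We fix it by running the construction with $k=\lfloor f/2\rfloor$ instead, which changes $\log k$ by only $1+o(1)$ since $f\ge C$ is large.

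Finally, the standard identity $\chi_f(H\cdot K_t)=t\,\chi_f(H)$ (fractional chromatic number is multiplicative under the lexicographic product with a complete graph) gives
\[
\chi_f(G)\ge t(1-\delta)\frac{k}{\log k}\approx(1-\delta)\frac{d}{k}\cdot\frac{k}{\log f}=(1-\delta)\frac{d}{\log f}.
\]
The approximations use $t k\ge d-O(k)=(1-o(1))d$, which holds because $k\le d^{1-\delta}=o(d)$, together with $\log k=(1+o(1))\log f$.

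The main obstacle is bookkeeping: ensuring that all rounding losses are $(1+o(1))$ factors uniformly over the whole range of $f$. That is why the threshold between the two regimes is placed at $d^{1-\delta}$, so that $t$ is large whenever the clique blow-up is used. Using triangle-freeness of $H_k$ to kill the cross-clique edges is the essential structural point that makes the sparsity count work.
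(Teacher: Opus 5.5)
Your construction is the same as the paper's: apply Theorem~\ref{theo: lower bound degen girth} directly for large $f$, and otherwise take $H\cdot K_t$ for a triangle-free degenerate $H$, using triangle-freeness to rule out edges between distinct outside cliques and $\chi_f(H\cdot K_t)=t\,\chi_f(H)$. However, your final adjustment of $k$ in the small-$f$ regime goes the wrong way, so the graph you build need not be left $\frac{d^2}{f}$-locally-sparse.

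Your count $e\le\frac32\cdot\frac{d^2}{k+1}$ is \emph{decreasing} in $k$. To get $e\le\frac{d^2}{f}$ you therefore need $k+1\ge\frac32 f$. With $k=\lfloor f/2\rfloor$ and $t\approx 2d/f$, the bound only gives about $3\frac{d^2}{f}$.

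This is not slack in your estimate. Take the last vertex $(x,t)$ of a clique, where $x$ has $b$ back-neighbours in $H_k$. Its back-neighbourhood spans exactly $\binom{t-1}{2}+b\binom t2+b\,t(t-1)\approx\frac{3b}{2}t^2$ edges. This exceeds $\frac{d^2}{f}$ as soon as $b$ is larger than about $f/6\approx k/3$. Nothing in Theorem~\ref{theo: lower bound degen girth} prevents back-degrees close to $k$.

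The fix is to increase the degeneracy of the base graph rather than decrease it, as the paper does: take $k=\lceil 2f\rceil$ and $t=\lfloor d/(k+1)\rfloor$. Your own bound then gives at most $\frac32\cdot\frac{d^2}{k+1}<\frac34\cdot\frac{d^2}{f}$ edges in every back-neighbourhood. Meanwhile $\log k=(1+o(1))\log f$ still holds since $f\ge C$. Also $k\le 2d^{1-\delta}+1=o(d)$ still makes $t$ large and gives $tk\ge(1-o(1))d$, so the rest of your computation goes through unchanged.

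Your concern about $f>d^2+1$ in the large-$f$ regime is unnecessary but harmless. Indeed, $\log f\ge(1-\delta)\log d$ holds for every $f\ge d^{1-\delta}$, and $H_d$ is left $0$-locally-sparse, so that case needs no extra care.
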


We rely on the following recent result by Allen, Noel, and the first author:

\begin{theorem}[{Allen--Dhawan--Noel~\cite{high_girth}}]\label{theo: lower bound degen girth}
    Let $\gamma \in (0, 1)$. There exists $d_0 \in \mathbb{N}$ such that for all $d \ge d_0$, there exists a $d$-degenerate, $K_3$-free graph $G$ satisfying
    \[
    \chi_f(G) \ge (1-\gamma)\frac{d}{\log d}.
    \]
\end{theorem}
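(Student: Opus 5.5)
The plan is to build $G$ randomly in layers, so that every independent set is forced into a recursive inequality, and then to turn that inequality into a bound on $\chi_f$ through the probabilistic description of fractional colorings from \S\ref{subsection: background}. Fix a small $\gamma'$ in terms of $\gamma$, a large number of layers $T$ depending only on $\gamma$, and sizes $N_0 \ll N_1 \ll \dots \ll N_T$.

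\textbf{Construction.} Let $L_0$ be an independent set of $N_0 \ge d^4$ vertices. For $t \ge 0$, each vertex of $L_{t+1}$ makes $d$ independent picks of a back-neighbor. Each pick first chooses a layer $s \in \{0,\dots,t\}$ uniformly at random and then a uniformly random vertex of $L_s$. Ordering the vertices layer by layer gives at most $d$ back-neighbors per vertex, so $G$ is $d$-degenerate. To make $G$ triangle-free, a vertex discards any pick that would close a triangle with another of its back-edges, and discards repeated picks. A pick lands on a fixed vertex of $L_s$ with probability about $1/(s N_s)$, and $N_0 \ge d^4$. I therefore expect each pick to be discarded with probability $O(1/d)$, so by Markov's inequality all but a $\gamma'$-fraction of the vertices in each layer keep at least $(1-\gamma')d$ of their picks.

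\textbf{Key deterministic property.} For an independent set $I$ of $G$, let $x_s = |I\cap L_s|/N_s$ and $a_t = \frac{1}{t+1}\sum_{s\le t}x_s$. A single pick from $L_{t+1}$ avoids $I$ with probability exactly $1-a_t$. I will show that, with positive probability, the following holds for every $t<T$ and every independent set $I$ of $G[L_0\cup\dots\cup L_t]$: the fraction of vertices of $L_{t+1}$ with no back-neighbor in $I$ is at most $(1-a_t)^{(1-\gamma')d} + \gamma' \le e^{-(1-\gamma')d\,a_t}+\gamma'$. For fixed $I$ this follows from a Chernoff bound over the independent vertices of $L_{t+1}$. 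A union bound over the at most $2^{N_0+\dots+N_t}$ choices of $I$ works once $N_{t+1} \ge C\,2^{N_0+\dots+N_t}/\gamma'^2$. A vertex of $L_{t+1}$ can join $I$ only if it has no back-neighbor in $I$, so we get the recursion
\[
x_{t+1} \le e^{-(1-\gamma')d\,a_t} + \gamma' .
\]

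\textbf{Analysis of the recursion.} Set $\beta = (1+\gamma')\log d/\big((1-\gamma')d\big)$. Whenever $a_t > \beta$, we get $x_{t+1} \le d^{-1-\gamma'}+\gamma'$, which is below $\beta$. Hence the running average can exceed $\beta$ only through the early terms, each of which is at most $1$. Taking $T$ large (about $d/\log d$ times a large constant) gives $\frac{1}{T+1}\sum_{t\le T}x_t \le (1+\gamma)\log d/d$ for every independent set $I$ of $G$. The additive $\gamma'$ must be taken as $o(\log d/d)$; this is harmless, since it only enlarges the $N_t$.

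\textbf{Conclusion.} Suppose $\chi_f(G)=q$ and take the induced distribution $\cD$ on $\cI(G)$ with $\pr[v\in I]\ge 1/q$ for every $v$. Averaging gives $\E[x_t]\ge 1/q$ for each $t$, so $1/q \le \E\big[\tfrac{1}{T+1}\sum_t x_t\big] \le (1+\gamma)\log d/d$. Hence $\chi_f(G)\ge(1-\gamma)d/\log d$ after adjusting constants.

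\textbf{Main obstacle.} I expect the hardest part to be handling the triangle-removal step uniformly over all $I$ while also controlling the small early layers. Early layers receive many back-edges and are where triangles concentrate. The union bound has to be applied to the surviving picks rather than to the original ones. My first attempt would be to expose the picks of $L_{t+1}$ only after $G[L_0\cup\dots\cup L_t]$ is fixed. Then the discarding rule depends on the earlier graph and on the vertex's own picks only, so the vertices of $L_{t+1}$ remain independent given the past, and Chernoff still applies.
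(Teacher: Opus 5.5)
The paper gives no proof of this statement. It imports it from Allen, Dhawan, and Noel~\cite{high_girth}, so your layered random construction has to be judged on its own. Most of the architecture is sound: exposing $L_{t+1}$ only after $G[L_0\cup\dots\cup L_t]$ is fixed, applying Hoeffding to the conditionally independent vertices of $L_{t+1}$, taking a union bound over the independent sets of the earlier layers, analysing the recursion for the layer averages $a_t$, and averaging against the distribution induced by an optimal fractional coloring. Two steps, however, do not work as written.

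\emph{The discard estimate.} You estimate each pick is discarded with probability $O(1/d)$. Then a vertex loses $O(1)$ picks in expectation, and Markov bounds the fraction of vertices losing more than $\gamma' d$ picks only by $O(1/(\gamma' d))$. Under your own requirement $\gamma'=o(\log d/d)$, this is not small. The correct estimate is much stronger and holds deterministically given the past. For $s<r$, each vertex of $L_r$ has at most $d$ back-neighbours, so $e(L_s,L_r)\le dN_r$. Hence two independent picks are adjacent with probability at most $d/N_0$ and coincide with probability at most $1/N_0$. A vertex of $L_{t+1}$ therefore discards anything at all with probability at most $d^3/N_0\le 1/d=o(\log d/d)$, and no Markov step is needed.

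\emph{The survival bound.} Writing $(1-a_t)^{(1-\gamma')d}$ treats the surviving picks as i.i.d.\ samples from the pick distribution. They are not: whether a pick survives depends on where it lands, and $I$ is adversarial. Use instead that the event ``$v$ has no kept back-neighbour in $I$'' is contained in the union of two events:
\begin{itemize}
  \item ``no pick of $v$ lands in $I$'', which has probability exactly $(1-a_t)^d$;
  \item ``some pick of $v$ is discarded'', which has probability at most $1/d$.
\end{itemize}
Hoeffding with deviation $1/d$, plus a union bound over all $I$ and all $t\le T$, then needs only $N_{t+1}\ge Cd^2(N_0+\dots+N_t+\log T)$. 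It gives $x_{t+1}\le e^{-da_t}+2/d$ for every independent set $I$ of $G$.

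From here, set $\beta=(1+\gamma_1)\log d/d$. Whenever $a_t>\beta$ you get $x_{t+1}<\beta$. When $a_t\le\beta$, use $x_{t+1}\le 1$. Induction then gives $a_t\le\beta+1/(t+1)$ for all $t$.

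Consequently $T$ must be of order $Kd/\log d$, not a quantity depending only on $\gamma$ as you first wrote. Indeed, for $I\supseteq L_0$ one has $a_t\ge 1/(t+1)$. With $T+1\ge Kd/\log d$, you get $a_T\le(1+\gamma_1+1/K)\log d/d$, and your averaging step finishes the proof.

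Finally, your construction controls only triangles, which is all this statement asks for. Because the early layers acquire enormous degree, $4$-cycles are abundant. So this route could not certify tightness of the girth-five upper bound mentioned in the concluding remarks.
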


Recall that the lexicographic product (or graph composition) $G \cdot H$ of two graphs $G$ and $H$ has vertex set $V(G) \times V(H)$, where two vertices $(u, v)$ and $(x, y)$ are adjacent if and only if $u x \in E(G)$, or $u = x$ and $v y \in E(H)$. The following standard property is well known (see, e.g.,~\cite{klavẑar1998fractional}):

\begin{fact}\label{fact: blow up}
    For any graphs $G$ and $H$, we have $\chi_f(G \cdot H) = \chi_f(G) \cdot \chi_f(H)$.
\end{fact}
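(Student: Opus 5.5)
The plan is to prove the two inequalities separately, using the linear program \eqref{eq:chif} for the upper bound and its LP dual for the lower bound. The dual is the \emph{fractional clique number}: maximize $\sum_{x} w(x)$ over $w \ge 0$ subject to $\sum_{x \in I} w(x) \le 1$ for every $I \in \cI$. Both programs are feasible and bounded, so strong duality says this optimum equals $\chi_f$. Throughout I write $G \cdot H$ as defined just before the statement of Fact~\ref{fact: blow up}.

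\emph{Upper bound $\chi_f(G\cdot H) \le \chi_f(G)\chi_f(H)$.}
\begin{enumerate}
    \item Take optimal fractional colorings $\lambda$ of $G$ and $\mu$ of $H$.
    \item For $I \in \cI(G)$ and $J \in \cI(H)$, the product $I \times J$ is independent in $G \cdot H$. Indeed, take $(u,v),(x,y) \in I\times J$. If $u \ne x$, then $ux \notin E(G)$ because $I$ is independent. If $u = x$ and $v \ne y$, then $vy \notin E(H)$ because $J$ is independent.
    \item Assign weight $\lambda_I\mu_J$ to each such $I\times J$. A vertex $(u,v)$ is covered with total weight $\bigl(\sum_{I\ni u}\lambda_I\bigr)\bigl(\sum_{J \ni v}\mu_J\bigr) \ge 1$.
    \item The total weight is $\bigl(\sum_I \lambda_I\bigr)\bigl(\sum_J \mu_J\bigr)=\chi_f(G)\chi_f(H)$. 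This gives a feasible solution of the required value.
\end{enumerate}

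\emph{Lower bound $\chi_f(G\cdot H) \ge \chi_f(G)\chi_f(H)$.}
\begin{enumerate}
    \item Take optimal fractional cliques $w_G$ of $G$ and $w_H$ of $H$. Define $w(u,v) = w_G(u)w_H(v)$, whose total weight is $\chi_f(G)\chi_f(H)$.
    \item Fix an independent set $S$ of $G\cdot H$. Let $P = \{u : (u,v)\in S \text{ for some } v\}$ be its projection, and for $u \in P$ let $S_u = \{v : (u,v)\in S\}$ be its fibre.
    \item $P$ is independent in $G$. If $u,x\in P$ with $ux\in E(G)$, then any $(u,v),(x,y)\in S$ are adjacent.
    \item Each $S_u$ is independent in $H$, by the second clause of the adjacency rule.
    \item Therefore
    \[
    \sum_{(u,v)\in S} w(u,v) = \sum_{u\in P} w_G(u)\sum_{v\in S_u} w_H(v) \le \sum_{u \in P} w_G(u) \le 1.
    \]
    So $w$ is a feasible fractional clique of $G\cdot H$.
    \item By LP duality, $\chi_f(G\cdot H) \ge \chi_f(G)\chi_f(H)$.
\end{enumerate}

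The only step needing any care is this lower bound. The decomposition of an arbitrary independent set into an independent projection together with independent fibres is exactly what makes the product weighting dual-feasible. The rest is routine bookkeeping.
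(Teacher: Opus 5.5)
Your proof is correct. The paper gives no proof of Fact~\ref{fact: blow up} and only cites it as standard (via Klav\v{z}ar). Your argument is exactly the standard LP-duality proof: multiply optimal fractional colorings for the upper bound, and for the lower bound multiply optimal fractional cliques, checking dual feasibility by splitting an independent set of $G \cdot H$ into an independent projection and independent fibres. The paper's application to $H \cdot K_t$ needs only your lower-bound half.
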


With these preliminary results in place, we proceed to the proof of Theorem~\ref{theo: lower bound}.

\begin{proof}[Proof of Theorem~\ref{theo: lower bound}]
    If $f \ge d^{1 - \varepsilon/10}$, the claim follows immediately by applying Theorem~\ref{theo: lower bound degen girth} with parameter $\gamma = \varepsilon/2$, since $K_3$-free graphs serve as valid constructions in this regime. Thus, we may assume $f < d^{1 - \varepsilon/10}$.
    
    Let $H$ be a $\ceil{2f}$-degenerate, $K_3$-free graph with fractional chromatic number satisfying
    \[
    \chi_f(H) \ge \left(1-\frac{\varepsilon}{2}\right)\frac{\ceil{2f}}{\log \ceil{2f}},
    \]
    as guaranteed by Theorem~\ref{theo: lower bound degen girth} for $f \ge C$.

    Setting $t \coloneqq \floor{\frac{d}{\ceil{2f} + 1}} \ge d^{\varepsilon/20}$, define $G \coloneqq H \cdot K_t$. We show that $G$ fulfills all required conditions. By Fact~\ref{fact: blow up},
    \[
    \chi_f(G) = \chi_f(H) \cdot \chi_f(K_t) \ge \left(1-\frac{\varepsilon}{2}\right)\frac{\ceil{2f}}{\log \ceil{2f}} \cdot t \ge (1 - \varepsilon)\frac{d}{\log f}
    \]
    for all sufficiently large $d,\,f$.

    It remains to show that $G$ admits a $d$-degenerate left $\frac{d^2}{f}$-locally-sparse ordering. Let $n = |V(H)|$ and let $(v_1, \ldots, v_n)$ be an $f$-degenerate ordering of $V(H)$. Identifying $V(K_t)$ with $[t]$, we order $V(G)$ lexicographically:
    \[
    ((v_1, 1), \ldots, (v_1, t), (v_2, 1), \ldots, (v_n, t)).
    \]
    
    For a vertex $u = (v_i, j)$, its preceding neighbors in this ordering are either of the form $(v_i, j')$ for $j' < j$, or $(v_l, j')$ for $l < i$ where $v_l v_i \in E(H)$. Consequently, the maximum back-degree of any vertex is bounded by
    \[
    (t - 1) + \ceil{2f}t \le t(\ceil{2f} + 1) \le d,
    \]
    confirming that the ordering is $d$-degenerate.

    Furthermore, the back-neighborhood of $u$ consists of at most $t-1$ vertices in $\{v_i\} \times [t]$ and $t$ vertices in $\{v_l\} \times [t]$ for each back-neighbor $v_l$ of $v_i$. Because $H$ is $K_3$-free, no edges exist between distinct back-neighbors of $v_i$ in $H$. Therefore, the total number of edges induced by the back-neighborhood of $u$ is at most
    \[
    \binom{t-1}{2} + \ceil{2f}\binom{t}{2} + \ceil{2f}(t-1)t \le \frac{t^2(\ceil{2f} + 1)}{2} + \ceil{2f}t^2 \le \frac{d^2}{f},
    \]
    which completes the proof.
\end{proof}

\section{Upper bound: proof of Theorem~\ref{theorem: main theorem}}\label{section: main proof}

In this section, we prove our main upper bound result, Theorem~\ref{theorem: main theorem}.
For the reader's convenience, we restate the result below.

\begin{theorem*}[Restatement of Theorem~\ref{theorem: main theorem}]
    Let $\eps \in (0, 1)$ be arbitrary.
    There exist $C > 0$ and $d_0 \in \N$ such that for every $d \ge d_0$ and $C \le f \le d^2 + 1$, if $G$ is a $d$-degenerate graph that admits a left $\frac{d^2}{f}$-locally-sparse degeneracy ordering, then
    \[
        \chiDP(G) \le (8 + \eps)\frac{d}{\log f}.
    \]
\end{theorem*}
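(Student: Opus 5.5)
By Observation~\ref{obs: fractional coloring}, it suffices to fix $q \ge q_0 = n^{3n}$ and an arbitrary $q$-fold DP-cover $\cH = (L,H)$. For this cover we will exhibit an $(\eta,\cH)$-coloring with $\eta = (1-\eps')\frac{\log f}{8d}$, where $\eps'$ depends on $\eps$. We fix the left $\frac{d^2}{f}$-locally-sparse degeneracy ordering $(v_1,\dots,v_n)$ and run the randomized procedure sketched in \S\ref{subsection: overview}.

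\textbf{The procedure.} Initially every color gets weight $p_0(c)=\alpha$ with $\alpha\approx 2\eta \approx \frac{\log f}{4d}$. At step $i$, each $c\in L(v_i)$ with $p_{i-1}(c)\le \hp$ is put into $S$ independently with probability $p_{i-1}(c)$. The colors with $p_{i-1}(c) > \hp$ are declared bad and discarded. The threshold is $\hp = f^{1/2-\delta}/d$.

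For each front-neighbor $c'\in N_f(c)$ with $c' \in L(v_k)$, we update the weight as follows:
\begin{itemize}
\item if $c\in S$, set $p_i(c')=0$;
\item if $c$ was a candidate not chosen, set $p_i(c')=p_{i-1}(c')/(1-p_{i-1}(c))$.
\end{itemize}
Each $c$ has at most one correspondent in $L(v_k)$, so these updates make $p_i(c')$ a martingale: $\E[p_i(c')\mid p_{i-1}]=p_{i-1}(c')$. Independence of $S$ in $H$ is automatic, because a chosen color zeroes out all its later neighbors. Hence
\[
\E|S\cap L(v_i)| \ge \alpha q - \hp\,\E|B_{i-1}(v_i)|,
\]
and it remains to show $\E|B_{i-1}(v_i)|\lesssim \alpha q/(2\hp)$.

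\textbf{Entropy step (main obstacle).} Set $H_i(v)=-\sum_{c\in L(v)}p_i(c)\log p_i(c)$. We expand $\E[H_{i}(v)]-H_{i-1}(v)$ over the processing of one back-neighbor $u=v_j$ of $v$. Two kinds of contributions appear.
\begin{itemize}
\item The first-order loss per back-neighbor is about $\sum_c p(c)p(c^{(j)})\log(1/\cdot)$. Summed over at most $d$ back-neighbors, this gives $O(\alpha^2 d q)$ after using $\sum p=\alpha q$.
\item Error terms arise when two back-neighbors $u,w$ are adjacent. Then the weight of $N^{(w)}(c)$ has itself been inflated by the earlier event at $u$. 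This correlation costs at most $\hp^2$ per edge of $G[N_b(v)]$ per unit weight. By the left local sparsity it contributes $\alpha q\cdot \frac{d^2}{f}\hp^2 = \alpha q\, f^{-2\delta}$.
\end{itemize}
This is negligible compared with $\alpha q\cdot \alpha d \approx \alpha q \log f/4$. I expect controlling these triangle terms rigorously to be the delicate part. The plan there is to condition on the history and bound the covariance of the two inflation factors by the product of their caps.

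So $H_0(v)-\E H_{i-1}(v)\le (1+o(1))\alpha^2 d q$, which is roughly $\alpha q\log f/4$. A convexity (Markov-type) argument now applies: bad colors carry weight at least $\hp$, and their entropy contribution is at most $p\log(1/\hp)$ rather than $p\log(1/\alpha)$. This yields
\[
\hp\,\E|B|\,\log(\hp/\alpha)\le H_0-\E H.
\]
Since $\log(\hp/\alpha)\approx \frac12\log f$, we get $\hp\E|B|\lesssim \alpha q/2$. Hence $\E|S\cap L(v_i)|\ge(1-o(1))\alpha q/2=\eta q$. The constant $8$ arises from the choices $\alpha d=\frac14\log f$ and $\log(\hp/\alpha)=\frac12\log f$.

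\textbf{Concentration.} $|S\cap L(v_i)|$ is a function of the $nq$ independent uniform variables used for the coin flips. Changing one coin changes the weights along front paths, and by induction on the ordering it alters at most $n^n$ selections in $L(v_i)$. Theorem~\ref{theo: tal} with $s=nq$ and $\zeta=n^n$ then gives a failure probability of $\exp(-\Omega(\eta^2 q/n^{2n+1}))<1/n$ for $q\ge n^{3n}$. A union bound over $v\in V(G)$ yields an $S$ with $|S\cap L(v)|\ge(1-\eps')\eta q$ for all $v$, which completes the proof.
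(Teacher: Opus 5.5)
Your proposal follows the paper's proof essentially step for step. It uses the same weighted procedure with threshold $\hp$ and the same entropy potential. Your correction for pairs of back-neighbors sharing a common back-neighbor is made rigorous in the paper as the energy supermartingale of Lemma~\ref{lemma: Q}, where the indicator $\bone{c'\notin B_i(v_j)}$ supplies the cap $p(c')\le\hp$ you invoke. The Markov-type comparison with $\log(\hp/\alpha)\approx\frac{1}{2}\log f$ and the Azuma step with $s=nq$, $\zeta=n^n$ also match.

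The one thing to fix is bookkeeping. The loss in $\E|S\cap L(v_i)|$ is $\E\sum_{c\in B_{i-1}(v_i)}p_{i-1}(c)$, so your inequality $\E|S\cap L(v_i)|\ge\alpha q-\hp\E|B_{i-1}(v_i)|$ points the wrong way: bad colors have weight above $\hp$, and since you keep inflating them they can grow far beyond it. However, the entropy comparison you describe, combined with the fact that every weight is either $0$ or at least $\alpha$, directly gives $\log(\hp/\alpha)\,\E\sum_{c\in B_{i-1}(v_i)}p_{i-1}(c)\le H_0(v_i)-\E H_{i-1}(v_i)$, which is exactly the paper's Lemma~\ref{lemma : B}.
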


Throughout the proof, we tacitly assume that $f$ is sufficiently large with respect to $\eps$. We introduce the following parameters:
\begin{equation}\label{eq: eta hp}
    \eta \coloneqq \frac{1}{(8 + \eps)}\frac{\log f}{d}\qquad\text{and}\qquad\hp \coloneqq \frac{f^{\frac{1}{2} - \frac{\eps}{40}}}{d}.
\end{equation}
Recall from \S\ref{subsection: overview} that our goal is to construct an $(\eta,\,\cH)$-coloring for every $q$-fold DP-cover $\cH$ of $G$ for all sufficiently large $q$.
We begin with a description of our algorithm.

\medskip

\begin{breakablealgorithm}
\caption{Random Independent Sets in DP-Covers}
\label{alg:dp-entropy}

\begin{flushleft}
\textbf{Input}: 
An \(n\)-vertex graph \(G=(V,E)\), a \(q\)-fold DP-cover \(\cH=(L,H)\) for some \(q\in\N\), a parameter \(\eps>0\), and a $d$-degenerate left $\frac{d^2}{f}$-locally-sparse vertex ordering $(v_1, \ldots, v_n)$.\\
\textbf{Output}: 
An independent set $S \subseteq V(H)$.
\end{flushleft}
\begin{itemize}
    \item \textbf{Initialize:}
    For each \(v\in V(G)\), set \(B_0(v)=\emptyset\) and \(S=\emptyset\). For each \(c\in V(H)\), set \(p_0(c)=\alpha \defeq \frac{2\eta}{1 - \eps/20}\), where \(\eta\) is defined in~\eqref{eq: eta hp}.
    \item \textbf{Iterate:}
    For \(i=1,\dots,n\), do the following:
    \begin{enumerate}
        \item\label{step: not neighbor} For each \(c'\in V(H)\) such that \(N^{(i)}(c') \setminus B_{i-1}(v_i) = \emptyset\), set \(p_i(c')=p_{i-1}(c')\).
        \item For each \(c\in L(v_i)\setminus B_{i-1}(v_i),\) do the following: 
        \begin{enumerate}[ref=\theenumi\theenumii]
            \item\label{step: sample} Let \(a_c\sim \Ber(p_{i-1}(c))\). If $a_c = 1$, then add $c$ to $S$.
            \item\label{step: update} For each $c'\in N_b(c)$, let $p_i(c') = p_i(c)$, and for each \(c'\in N_f(c)\), make the following update:
            \[p_i(c') = \left\{\begin{array}{cc}
               p_{i-1}(c')  & \text{if } p_{i-1}(c') > \hp; \\[1em]
               0  & \text{if } a_c = 1; \\[1em]
               \dfrac{p_{i-1}(c')}{1-p_{i-1}(c)}  & \text{otherwise.}
            \end{array}\right.\]
        \end{enumerate}
        \item For each \(j>i\), set \(B_i(v_j)=\{c'\in L(v_j)\,:\, p_i(c')> \hp \}\). 
    \end{enumerate}
    
\end{itemize}
\end{breakablealgorithm}

\medskip

A few remarks are in order.
First, as a result of our update rule (step~\ref{step: update}), the output $S$ is an independent set.
Furthermore, once the weight of a color exceeds $\hp$, it remains unchanged thereafter.
It suffices to show that there is an outcome of Algorithm~\ref{alg:dp-entropy} satisfying $|S\cap L(v)| \geq \eta|L(v)|$ for each $v \in V(G)$.
To this end, let $S(v) \coloneqq S \cap L(v)$ be the random set of colors assigned to $v$.
We will establish that these sets are large via two key lemmas.

\begin{lemma}\label{lemma: main1}  
    For each $k \in [n]$, \(\E[|S(v_k)|]\geq \alpha q /2\).
\end{lemma}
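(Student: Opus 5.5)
Following the overview, I would first compute $\E[|S(v_k)|]$ exactly. At step $k$, each good color $c \in L(v_k)\setminus B_{k-1}(v_k)$ enters $S$ with conditional probability $p_{k-1}(c)$. Hence
\[
\E[|S(v_k)|] = \E\Bigl[\sum_{c \in L(v_k)} p_{k-1}(c)\Bigr] - \E\Bigl[\sum_{c \in B_{k-1}(v_k)} p_{k-1}(c)\Bigr].
\]

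\textbf{Step 1: the martingale property.} For every $c'$ and every step, $\E[p_i(c') \mid p_{i-1}(\cdot)] = p_{i-1}(c')$. If $p_{i-1}(c')>\hp$, or $c'$ has no good neighbor in $L(v_i)$, the weight is unchanged. Otherwise $c'$ has a unique neighbor $c$ in $L(v_i)$, and
\[
p_{i-1}(c)\cdot 0 + (1-p_{i-1}(c))\,\frac{p_{i-1}(c')}{1-p_{i-1}(c)} = p_{i-1}(c').
\]
Here $p_{i-1}(c)\le \hp<1$, so the division is legitimate. Summing gives $\E[\sum_{c\in L(v_k)}p_{k-1}(c)] = \alpha q$.

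\textbf{Step 2: reduce to entropy.} It remains to show that the bad contribution is at most $\alpha q/2$. Bad colors satisfy $p>\hp$, so $-\log p < \log(1/\hp)$. Good colors have $p\le \hp$, and by the initial value $\alpha$ one expects them to contribute at most about $\log(1/\alpha)$ per unit mass. Writing $W_B=\sum_{c\in B}p(c)$, this gives roughly
\[
H_{k-1}(v_k) \le (\alpha q - W_B)\log(1/\hp')+W_B\log(1/\hp)+O(\alpha q),
\]
where $\hp'$ is a suitable good-color scale. More cleanly, I would use convexity or Jensen on $x\mapsto -x\log x$ with total mass $\alpha q$ over $q$ colors. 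The conclusion is
\[
H_0(v_k)-H_{k-1}(v_k) \ge W_B\log(\hp/\alpha) - O(\alpha q).
\]
With $\hp = f^{1/2-\eps/40}/d$ and $\alpha\approx \log f/(4d)$, we have $\log(\hp/\alpha)\approx (\tfrac12-\tfrac{\eps}{40})\log f$. So it suffices to show
\[
\E[H_0(v_k)-H_{k-1}(v_k)] \le (1-\Theta(\eps))\,\tfrac14\,\alpha q \log f.
\]

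\textbf{Step 3: the entropy-loss bound (main obstacle).} This is where I would spend most of the work. I would bound the expected entropy decrease step by step. At step $i$, only colors $c'\in L(v_k)$ with a good neighbor $c\in L(v_i)$, $v_i\in N_b(v_k)$, change. Let $\phi(x)=-x\log x$.

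First, suppose $p(c)$ is a fixed value $p$. Then
\[
\E[\phi(p_i(c'))]-\phi(p_{i-1}(c')) = -p_{i-1}(c')\log\frac{1}{1-p} \ge -p_{i-1}(c')\bigl(p+O(p^2)\bigr),
\]
since $\log(1/(1-p))\le p+O(p^2)$ for $p\le\hp$. Summed over $c'$ and over the at most $d$ back-neighbors, this gives the leading term $\approx \alpha q\cdot \alpha d$ in expectation. One would need $\sum_{c'} p(c')p(c)$ to have expectation close to $\alpha^2 q$ per neighbor. The cross term requires that $p(c)$ and $p(c')$ be uncorrelated, which holds if no earlier vertex is adjacent to both $v_i$ and $v_k$ in a way that couples them.

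Correlations arise exactly through edges inside $G[N_b(v_k)]$. For each such edge I would bound the error crudely by $\hp^2$ per color pair, using that the weights of good colors are at most $\hp$. This yields the bound from the overview:
\[
\E[H_0-H_{k-1}] \lesssim \alpha q\bigl(\alpha d + |E(G[N_b(v_k)])|\,\hp^2\bigr) \le \alpha q\Bigl(\alpha d + \frac{d^2}{f}\cdot \frac{f^{1-\eps/20}}{d^2}\Bigr) = \alpha q\bigl(\alpha d + f^{-\eps/20}\bigr).
\]

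Since $\alpha d\approx \log f/4$, the second term is negligible. Plugging into Step 2 gives
\[
\E[W_B] \le \frac{\alpha q\,(\log f/4)(1+o(1))}{(\tfrac12-\tfrac{\eps}{40})\log f} \approx \frac{\alpha q}{2}(1+o(1)).
\]
This is borderline, and the slack comes from the $1-\eps/20$ factor in $\alpha$: taking $\alpha d = \frac{2\log f}{(8+\eps)(1-\eps/20)}$ gives a strict improvement. The delicate points are the precise Jensen step and controlling the correlation term via the martingale structure.
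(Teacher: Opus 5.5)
Your outline has the same architecture as the paper: a martingale potential, the bad mass bounded by an entropy deficit, the entropy loss bounded by pairwise energy terms, and triangles charged to local sparsity. However, the step that carries the argument, the correlation bound in Step~3, is asserted rather than proved. The justification you sketch (a crude $\hp^2$ per color pair) does not produce the factor $\alpha$ that appears in your displayed bound.

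Per color of $L(v_k)$, the triangle error must come out as $O(\hp^2\alpha\cdot d^2/f)=O(\alpha f^{-\eps/20})$, which is negligible against the budget $\approx\alpha\log f/4$. A flat error of $\hp^2$ per pair gives $f^{-\eps/20}$ instead. Even the flat bound $2\hp^3$ (valid since all colors in the triangle are good) gives $2f^{1/2-3\eps/40}/d$. Both exceed $\alpha\log f\approx\log^2 f/(4d)$ once $d$ is large relative to $f$, and the second does so for all large $f$.

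The missing idea is that the error is proportional to the \emph{current} weight of $c$, whose mean is pinned at $\alpha$ by Step~1. Fix $c\in L(v_k)$, $c'\in N^{(j)}(c)$ and $i<j$. If a good $c''\in L(v_i)$ is adjacent to both $c$ and $c'$, and $c,c'$ are good, then
\[
\E[p_i(c)p_i(c')\mid\cF_{i-1}]=\frac{p_{i-1}(c)p_{i-1}(c')}{1-p_{i-1}(c'')}\le p_{i-1}(c)p_{i-1}(c')+2\hp^2p_{i-1}(c),
\]
using $p_{i-1}(c'),p_{i-1}(c'')\le\hp$. At every other step the product is an exact martingale: distinct colors of $L(v_i)$ are sampled independently, or one of the two weights does not move. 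The indicator $\bone{c'\notin B_i(v_j)}$ is nonincreasing in $i$, so taking expectations with $\E[p_{i-1}(c)]=\alpha$ gives $\E[\bone{c'\notin B_{j-1}(v_j)}p_{j-1}(c)p_{j-1}(c')]\le\alpha^2+2\hp^2\alpha|N_b(c)\cap N_b(c')|$. The paper packages this as the supermartingale $S_i$. Finally, $\sum_{c'\in N_b(c)}|N_b(c)\cap N_b(c')|=|E(H[N_b(c)])|\le d^2/f$.

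Separately, your Step~2 inequality $H_0-H_{k-1}\ge W_B\log(\hp/\alpha)-O(\alpha q)$ is false pointwise, because the total mass $P_{k-1}(v_k)$ is random, not $\alpha q$. In the identity $H_0-H_{k-1}=\sum_c p_{k-1}(c)\log\bigl(p_{k-1}(c)/\alpha\bigr)+\bigl(P_{k-1}(v_k)-\alpha q\bigr)\log\alpha$, the last term has mean zero. On an event where every weight in $L(v_k)$ has doubled while staying at most $\hp$, however, that term is about $-\alpha q\log(1/\alpha)$ while $W_B=0$. You have to take expectations before using the mass.

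The paper does this without loss. Weights only grow or drop to $0$, so every nonzero $p_{k-1}(c)$ is at least $\alpha$. Hence $W_B\log(\hp/\alpha)\le\sum_c p_{k-1}(c)\log\bigl(p_{k-1}(c)/\alpha\bigr)$ pointwise, and by Step~1 the right-hand side has expectation exactly $H_0(v_k)-\E[H_{k-1}(v_k)]$. With these two repairs your outline becomes the paper's proof.
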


\begin{lemma}\label{lemma: main2}
    For each $k \in [n]$, \(\pr[|S(v_k)|\leq (1- \eps/20)\alpha q/2]=\exp\left( -\Omega\left( \dfrac{\varepsilon^2q\alpha^2}{n^{2n + 1}}\right)  \right)\).
\end{lemma}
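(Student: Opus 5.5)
We prove Lemma~\ref{lemma: main2} by applying Theorem~\ref{theo: tal} to $X \coloneqq |S(v_k)|$, together with the lower bound $\E[X] \ge \alpha q/2$ from Lemma~\ref{lemma: main1}. The remaining work is to choose a product probability space on which Algorithm~\ref{alg:dp-entropy} is a deterministic function of independent trials, and then bound how much $X$ can change when one trial is altered.

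\textbf{The trials.} For each $c \in V(H)$ we sample an independent uniform variable $U_c \in [0,1]$ at the start, and we set $a_c = \bone{U_c \le p_{i-1}(c)}$ when $c \in L(v_i)$ is processed. This reproduces the $\mathrm{Ber}(p_{i-1}(c))$ law, conditionally on the history. The whole run, and hence $S$, is then determined by $s = nq$ independent trials $\{U_c\}_{c \in V(H)}$.

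\textbf{The Lipschitz constant.} Fix a color $c_0 \in L(v_j)$ and change only $U_{c_0}$. We claim that afterwards, for each $i \ge j$, at most $\zeta_i$ colors have a different trajectory of $(p_i, a_\cdot, B_i)$, where $\zeta_j \le q$ and $\zeta_{i} \le \zeta_{i-1}(1+q) \le (q+1)^{i-j+1}$. The reason is as follows.
\begin{itemize}
\item Changing $a_{c_0}$ alters $p_j$ only on $N_f(c_0)$. This set has at most one color in each later list, since $|N^{(\ell)}(c_0)| \le 1$.
\item A color whose weight differs can change its own sample $a_c$. It can also change its membership in $B$, which in turn affects which colors in its list are processed.
\item Each affected color at $v_i$ propagates to at most one neighbor per later list.
\end{itemize}

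Counting affected colors per list rather than globally is cruder but simpler. The number of affected colors in $L(v_k)$ is at most the number of ``paths'' $v_j = u_0 < u_1 < \dots < u_r = v_k$ in the ordering, weighted by one color per step. In the worst case each list can have all $q$ colors affected only through branching at the $B$-sets. To avoid any dependence on $q$ in $\zeta$, I would argue instead that a change at a single color influences, in each subsequent list, at most as many colors as there are directed paths from $v_j$. Since colors on a path map to colors injectively via the matchings, this count is at most the number of increasing sequences of vertices, i.e.\ at most $2^{n} \le n^n$. The bookkeeping here is the main obstacle. The sets $B_{i-1}(v_i)$ decide which colors of $L(v_i)$ are sampled at all, but the sampling of a color $c$ depends only on $p_{i-1}(c)$ and $U_c$, not on other colors in the list. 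So an unaffected color stays unaffected unless one of its $H$-neighbors is affected. Thus affected colors form a set reachable from $c_0$ via forward edges of $H$. Each such forward path visits each list at most once, and $H[L(u)\cup L(w)]$ is a matching, so a path is determined by its sequence of underlying vertices. This yields $\zeta \le \sum_{r} \binom{n}{r} \le 2^n \le n^n$.

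\textbf{Concentration.} We take $t = \frac{\eps}{20}\cdot\frac{\alpha q}{2} \le \frac{\eps}{20}\E[X]$. Then $\{X \le (1-\eps/20)\alpha q/2\} \subseteq \{|X - \E X| \ge t\}$, and Theorem~\ref{theo: tal} gives
\[
\pr\bigl[X \le (1-\eps/20)\alpha q/2\bigr] \le \exp\left(-\frac{\eps^2\alpha^2q^2}{1600 \cdot 2\, n^{2n}\, nq}\right) = \exp\left(-\Omega\left(\frac{\eps^2 q\alpha^2}{n^{2n+1}}\right)\right),
\]
as required.
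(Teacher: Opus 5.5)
Your proposal is correct and follows essentially the same route as the paper: you realize each $a_c$ through an independent uniform variable so that $|S(v_k)|$ is a function of $nq$ trials, bound the effect of changing one trial by the number of strictly forward paths in $H$ from that color, and apply Theorem~\ref{theo: tal} together with Lemma~\ref{lemma: main1} and $t = \frac{\eps}{20}\cdot\frac{\alpha q}{2}$. Your path count ($2^n$, since a forward path is determined by its sequence of underlying vertices) is slightly sharper than the paper's $n^n$; you should delete the preliminary $(1+q)$-branching sketch, because its premise is false: membership in $B_{i-1}(v_i)$ is decided color by color, as you yourself note afterwards.
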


We defer the proofs of these lemmas to \S\ref{subsection: exp S} and \S\ref{subsection: conc S}, respectively.
With these results in hand, we observe that
\[\pr\left[\exists k \text{ s.t. }|S(v_k)| \leq \eta|L(v_k)|\right] = \pr\left[\exists k \text{ s.t. }|S(v_k)| \leq (1- \eps/20)\alpha q/2\right] \leq n\exp\left(-\Omega\left(\frac{\varepsilon^2q\alpha^2}{n^{2n + 1}}\right)\right).\]
This probability vanishes as $q \to \infty$, which completes the proof of Theorem~\ref{theorem: main theorem} by Observation~\ref{obs: fractional coloring}.

\subsection{Proof of Lemma~\ref{lemma: main1}}\label{subsection: exp S}
Note the following:

\begin{equation}
    \E[|S(v_k)]|=\E\left[\sum_{c\in L(v_k)\setminus B_{k-1}(v_k)}p_{k-1}(c)\right]=\E\left[\sum_{c\in L(v_k)}p_{k-1}(c)\right]-\E\left[\sum_{c \in B_{k-1}(v_k)}p_{k-1}(c)\right]\label{eq: main}.
\end{equation}
In the remainder of this section, we bound each term on the right-hand side. To this end,
we define the following quantities:
\begin{align*}
    P_i(v_k)&=\sum_{c\in L(v_k)}p_i(c), &\text{for }&0\leq i<k\leq n;\\
    Q_i(v_j,v_k)&=\sum_{c\in L(v_k)}\sum_{c'\in N^{(j)}(c)}\bone{c'\notin B_i(v_j)} p_i(c)p_i(c'),&\text{for }&0\leq i<j<k\leq n;\\
    H_i(v_k)&=-\sum_{c\in L(v_k)}p_i(c)\log p_i(c),&\text{for }&0\leq i<k\leq n.
\end{align*}
We refer to $P_i(v_k)$ as the \textit{potential} at $v_k$, $Q_i(v_j, v_k)$ as the \textit{energy} of the pair $(v_j, v_k)$, and $H_i(v_k)$ as the \textit{entropy} of $v_k$ after the $i$-th iteration.
We begin by computing the expected values of these random variables, starting with the potential.

\begin{lemma}\label{lemma: P}
    \(\E[P_{k-1}(v_k)]=\alpha q\).
\end{lemma}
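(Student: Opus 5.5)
We show that $P_i(v_k)$ is a martingale in $i$ for $0 \le i \le k-1$ with respect to the natural filtration of Algorithm~\ref{alg:dp-entropy}. Since $P_0(v_k) = \alpha|L(v_k)| = \alpha q$, it then suffices to prove
\[
\E[P_i(v_k) \mid \mathcal{F}_{i-1}] = P_{i-1}(v_k) \quad \text{for each } 1 \le i \le k-1,
\]
where $\mathcal{F}_{i-1}$ is the history through iteration $i-1$. The tower property then gives $\E[P_{k-1}(v_k)] = \alpha q$. By linearity it is enough to show, for every fixed $c' \in L(v_k)$, that $\E[p_i(c') \mid \mathcal{F}_{i-1}] = p_{i-1}(c')$.

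Fix $i<k$ and $c' \in L(v_k)$, and condition on $\mathcal{F}_{i-1}$. Because $v_i$ precedes $v_k$, property \ref{dp:matching} gives $|N^{(i)}(c')| \le 1$. We split into three cases.

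\begin{itemize}
\item If $N^{(i)}(c') \setminus B_{i-1}(v_i) = \emptyset$, step~\ref{step: not neighbor} sets $p_i(c') = p_{i-1}(c')$ deterministically.
\item Otherwise there is a unique $c \in L(v_i) \setminus B_{i-1}(v_i)$ with $c \sim c'$, and $c' \in N_f(c)$. If $p_{i-1}(c') > \hp$, the first branch of step~\ref{step: update} again leaves $p_i(c') = p_{i-1}(c')$.
\item In the remaining case the only randomness is $a_c \sim \Ber(p_{i-1}(c))$, and $p_{i-1}(c) \le \hp < 1$ because $c \notin B_{i-1}(v_i)$. This gives
\[
\E[p_i(c') \mid \mathcal{F}_{i-1}] = p_{i-1}(c)\cdot 0 + \bigl(1-p_{i-1}(c)\bigr)\frac{p_{i-1}(c')}{1-p_{i-1}(c)} = p_{i-1}(c'),
\]
as required.
\end{itemize}

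The main point needing care is that $c'$ is updated by at most one color $c$ during iteration $i$. The matching condition \ref{dp:matching} guarantees this, so no two updates compound. We also need $\hp < 1$ so the denominator is positive, which holds since $\hp = f^{1/2-\eps/40}/d \le (d^2+1)^{1/2-\eps/40}/d < 1$ for large $d$.

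Two further points should be noted. First, the instruction ``for $c' \in N_b(c)$, let $p_i(c') = p_i(c)$'' concerns colors of earlier vertices, and $c' \in L(v_k)$ with $k>i$ is never among these. Second, in step~\ref{step: not neighbor}, a color $c'$ whose neighbor in $L(v_i)$ is bad is treated as unchanged, and this matches step~\ref{step: update}, which only ranges over non-bad $c$. With these checks the martingale identity holds, and the lemma follows.
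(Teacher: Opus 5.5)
Your proof is correct and follows the paper's argument: the paper also proves the per-color martingale identity $\E[p_i(c)\mid\mathcal{F}_{i-1}] = p_{i-1}(c)$ using the same three cases (no non-bad neighbor in $L(v_i)$, the color already exceeding $\hp$, and the Bernoulli computation). It then iterates this identity and sums over the $q$ colors of $L(v_k)$. Your additional checks are details the paper leaves implicit: that $\hp<1$, that the matching condition allows at most one update per iteration, and that the $N_b$ rule never touches colors of $v_k$.
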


\begin{proof}\stepcounter{ForClaims}\renewcommand{\theForClaims}{\ref*{lemma: P}}
    Let $\cF_i$ denote the $\sigma$-algebra generated by the history of the procedure up to the end of the $i$-th iteration, with $\cF_0$ representing the trivial $\sigma$-algebra. We use this notation throughout the proofs in this section. We claim that $(p_i(c))_{i=0}^{k-1}$ is a martingale with respect to $(\cF_i)_{i=0}^{k-1}$ for each $c \in L(v_k)$.
    \begin{claim}\label{claim: weight expectation}
        For every \(1\leq i<k\), \(\E[p_i(c)\mid \cF_{i-1}]=p_{i-1}(c).\)
    \end{claim}
    \begin{claimproof}
    As a result of step~\ref{step: not neighbor}, 
    if $N^{(i)}(c) \setminus B_{i-1}(v_i) = \emptyset$, the claim is trivial.
    Suppose $c' \in N^{(i)}(c) \setminus B_{i-1}(v_i)$.
    If $c \in B_{i-1}(v_j)$, the claim follows from step~\ref{step: update}.
    Otherwise, we have
    \[
    \E[p_i(c)\mid \cF_{i-1}]=(1-p_{i-1}(c'))\frac{p_{i-1}(c)}{1-p_{i-1}(c')}=p_{i-1}(c),
    \]
    as desired.
    \end{claimproof}
    
    Repeatedly applying Claim~\ref{claim: weight expectation} yields \(\E[p_{k-1}(c)]=\E[p_0(c)]=\alpha\), which gives \(\E[P_{k-1}(v_k)]=\alpha q\). 
\end{proof}

Next, we bound the energy of a pair of vertices.

\begin{lemma}\label{lemma: Q}
    For $1 \leq j < k \leq n$, we have
    \[\E[Q_{j-1}(v_j,v_k)]\leq  \sum_{c\in L(v_k)}\sum_{c' \in N^{(j)}(c)}\left(\alpha^2 + 2\hp^2\alpha|N_b(c)\cap N_b(c')|\right).\]
\end{lemma}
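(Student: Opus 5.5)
We follow the product $p_i(c)p_i(c')$ for a fixed pair $c\in L(v_k)$, $c'\in N^{(j)}(c)$, over the iterations $i=0,1,\dots,j-1$. Set
\[
X_i \coloneqq \bone{c'\notin B_i(v_j)}\,p_i(c)p_i(c').
\]
At $i=0$ we have $X_0=\alpha^2$. The claim follows by linearity once we show that $\E[X_{i}\mid\cF_{i-1}]\le X_{i-1}+(\text{correction})$ and that the corrections, summed over $i<j$, contribute at most $2\hp^2\alpha|N_b(c)\cap N_b(c')|$.

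\textbf{Step 1 (one iteration).} Fix $i<j$ and condition on $\cF_{i-1}$. There are three cases.
\begin{itemize}
    \item If $v_i$ is adjacent to at most one of $v_k,v_j$ through these colors, at most one of $p(c),p(c')$ is updated at step $i$, and the other factor is $\cF_{i-1}$-measurable. Claim~\ref{claim: weight expectation} then gives that the expectation of the product is preserved.
    \item The indicator $\bone{c'\notin B_i(v_j)}$ is nonincreasing in $i$, because a color that becomes bad stays bad with frozen weight. Dropping it can only raise the value, so $\E[X_i\mid\cF_{i-1}]\le X_{i-1}$ in this case. A little care is needed here: the indicator at time $i$ depends on $p_i(c')$. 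Since $X_i\le \bone{c'\notin B_{i-1}(v_j)}p_i(c)p_i(c')$, it suffices to use the previous indicator.
    \item The interesting case is when $v_i\in N_b(v_k)\cap N_b(v_j)$ and both $c$ and $c'$ have neighbours $a\in N^{(i)}(c)$ and $b\in N^{(i)}(c')$ in $L(v_i)\setminus B_{i-1}(v_i)$.
\end{itemize}

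\textbf{Step 2 (the common-neighbour case).} The analysis splits on whether $a=b$.

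If $a=b$, the single variable $a_a$ drives both updates. When $a_a=1$ both weights become $0$. Otherwise, with probability $1-p$ where $p=p_{i-1}(a)$, the product is multiplied by $(1-p)^{-2}$. The conditional expectation is therefore $X_{i-1}/(1-p)$, giving an increase of
\[
X_{i-1}\,\frac{p}{1-p}\le 2\hp\, X_{i-1},
\]
using $p\le\hp\le 1/2$. The thresholding keeps $p_{i-1}(c),p_{i-1}(c')\le\hp$ while they are live (otherwise the update freezes them). Bounding $X_{i-1}\le \hp\,p_{i-1}(c)$ then gives an increase of at most $2\hp^2p_{i-1}(c)$.

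If $a\ne b$, the two Bernoulli trials are independent, since the trials at step $i$ are independent across colors. The expectation of the product then factorizes and is preserved.

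So in every case
\[
\E[X_i\mid\cF_{i-1}]\le X_{i-1}+2\hp^2 p_{i-1}(c)\,\bone{v_i\in N_b(v_k)\cap N_b(v_j),\ N^{(i)}(c)=N^{(i)}(c')\neq\emptyset}.
\]
The lemma's $|N_b(c)\cap N_b(c')|$ counts exactly the indices $i$ for which such a shared neighbour exists.

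\textbf{Step 3 (summing).} Taking expectations and telescoping from $0$ to $j-1$ uses $\E[p_{i-1}(c)]=\alpha$ from Claim~\ref{claim: weight expectation}. This gives
\[
\E[X_{j-1}]\le \alpha^2+2\hp^2\alpha\,|N_b(c)\cap N_b(c')|.
\]
The count is taken over common back-neighbour colors with index less than $j$, which is at most the full intersection. Summing over $c$ and $c'$ gives the lemma.

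The main obstacle is Step 2, making rigorous the bound $p_{i-1}(c)p_{i-1}(c')\le\hp\,p_{i-1}(c)$. The difficulty is interaction with bad colors: $c$ itself could be bad and frozen with a large weight. The case analysis must use the convention in step~\ref{step: update} that frozen weights are not updated. In that situation only one factor moves, so the expectation is preserved. Consequently, the growth term only ever arises when both $c$ and $c'$ are live with weights at most $\hp$.
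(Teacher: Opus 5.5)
Your proposal is correct and follows essentially the same argument as the paper. It uses the same case analysis: $c'$ bad, $c$ frozen, only one weight updated, distinct live neighbours $c_1\neq c_2$ giving independence, and a shared live neighbour $c''$ giving the factor $1/(1-p_{i-1}(c''))\le 1+2\hp$ combined with $p_{i-1}(c')\le\hp$. It also relies, as the paper does, on the monotonicity of $\bone{c'\notin B_i(v_j)}$.

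The only difference is bookkeeping. You telescope unconditional expectations using $\E[p_{i-1}(c)]=\alpha$ and a deterministic indicator for a shared neighbour. The paper instead folds the correction term $2\hp^2 p_i(c)\,|\cdot|$ into a supermartingale $S_i$.
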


\begin{proof}\stepcounter{ForClaims}\renewcommand{\theForClaims}{\ref*{lemma: Q}}
By the definition of energy, for \(1 \leq j < k \leq n\) we have:
\begin{equation}
    \E[Q_{j-1}(v_j,v_k)]=\sum_{c\in L(v_k)}\sum_{c'\in N^{(j)}(c)} \E[\bone{c' \notin B_{j-1}(v_j)}p_{j-1}(c)p_{j-1}(c')].\label{eq: Q}
\end{equation}
It suffices to bound \(\E[\bone{c' \notin B_{j-1}(v_j)}p_{j-1}(c)p_{j-1}(c')]\) from above for each \(c\in L(v_k)\) and \(c'\in L(v_j)\) such that \(cc'\in E(H)\).
To this end, we define the following random variable:
\[S_i \coloneqq \bone{c' \notin B_{i}(v_j)}\left(p_{i}(c)p_{i}(c') + 2\hp^2p_{i}(c)|\cup_{l=i + 1}^{j-1}(N^{(l)}(c) \cap N^{(l)}(c'))|\right).\]
Observe that $S_{j-1}=\bone{c' \notin B_{j-1}(v_j)}p_{j-1}(c)p_{j-1}(c')$, since the union above is empty. The critical claim is that this sequence of random variables forms a supermartingale.

\begin{claim}\label{claim: energy}
    For $i = 1, \ldots, j-1$, we have $\E[S_i \mid \cF_{i-1}] \leq S_{i-1}$.
\end{claim}

\begin{claimproof}
    Given $i\in[j-1]$, we have
    \begin{equation}\E[S_i \mid \cF_{i-1}] =\E\left[\bone{c' \notin B_{i}(v_j)}\left(p_{i}(c)p_{i}(c') + 2\hp^2p_{i}(c)|\cup_{l=i + 1}^{j-1}(N^{(l)}(c) \cap N^{(l)}(c'))|\right) \,\,\bigg|\,\, \cF_{i-1}\right]. \label{eq: exp bound}
    \end{equation}
    By Claim~\ref{claim: weight expectation} and since $\bone{c' \notin B_{i}(v_j)} \leq \bone{c' \notin B_{i-1}(v_j)}$ as a result of step~\ref{step: update}, we obtain
    \begin{align*}
        &~\E[\bone{c' \notin B_{i}(v_j)}2\hp^2p_{i}(c)|\cup_{l=i + 1}^{j-1}(N^{(l)}(c) \cap N^{(l)}(c'))| \mid \cF_{i-1}] \\
        &\qquad \qquad \leq \bone{c' \notin B_{i-1}(v_j)}2\hp^2|\cup_{l=i + 1}^{j-1}(N^{(l)}(c) \cap N^{(l)}(c'))| \E[p_{i}(c)\mid \cF_{i-1}] \\
        &\qquad \qquad = \bone{c' \notin B_{i-1}(v_j)}2\hp^2p_{i-1}(c)|\cup_{l=i + 1}^{j-1}(N^{(l)}(c) \cap N^{(l)}(c'))|.
    \end{align*}
    It now suffices to show that
    \[\E[\bone{c' \notin B_{i}(v_j)}p_{i}(c)p_{i}(c')\mid \cF_{i-1}] \leq \bone{c' \notin B_{i-1}(v_j)}\left(p_{i-1}(c)p_{i-1}(c') + 2\hp^2p_{i-1}(c)|N^{(i)}(c) \cap N^{(i)}(c')|\right).\]
    
First, note that if $c' \in B_{i-1}(v_j)$, both sides equal $0$ and the claim holds trivially.
Additionally, if $c \in B_{i-1}(v_k)$, applying Claim~\ref{claim: weight expectation} gives
\begin{align*}
    \E[p_i(c)p_i(c')\mid \cF_{i-1}] &= p_{i-1}(c)\E[p_i(c')\mid \cF_{i-1}] = p_{i-1}(c)p_{i-1}(c') \\
    &\leq p_{i-1}(c)p_{i-1}(c') + 2\hp^2p_{i-1}(c)|N^{(i)}(c) \cap N^{(i)}(c')|,
\end{align*}
as desired.
A similar argument applies if $N^{(i)}(c) \setminus B_{i-1}(v_i) = \emptyset$ or $N^{(i)}(c') \setminus B_{i-1}(v_i) = \emptyset$.
Therefore, we may assume that $c \notin B_{i-1}(v_k)$, $c' \notin B_{i-1}(v_j)$, $N^{(i)}(c)  \setminus B_{i-1}(v_i) \neq \emptyset$, and $N^{(i)}(c')  \setminus B_{i-1}(v_i) \neq \emptyset$.
Let $c_1$ be the unique color in $N^{(i)}(c) \setminus B_{i-1}(v_i)$ and let $c_2$ be the unique color in $N^{(i)}(c') \setminus B_{i-1}(v_i)$.
We consider two cases:
\begin{itemize}
    \item \textbf{Case 1:} \(c_1 \neq c_2.\) 
    An argument identical to the proof of Claim~\ref{claim: weight expectation} yields
    \begin{align*}
        \E[p_i(c)p_i(c')\mid \cF_{i-1}] &= (1 - p_{i-1}(c_1))(1 - p_{i-1}(c_2))\frac{p_{i-1}(c)}{(1 - p_{i-1}(c_1))}\frac{p_{i-1}(c')}{(1 - p_{i-1}(c_2))} \\
        &= p_{i-1}(c) p_{i-1}(c'),
    \end{align*}
    as desired.
    \item \textbf{Case 2:} \(c_1 = c_2 = c''\).
    We have
    \begin{align*}
        \E[p_i(c)p_i(c') \mid \cF_{i-1}]
        &= \frac{p_{i-1}(c)}{(1- p_i(c''))} \frac{p_{i-1}(c')}{(1- p_i(c''))} (1- p_i(c'')) \\
        &=\frac{p_{i-1}(c)p_{i-1}(c')}{1-p_i(c'')}\\
        &\le \frac{p_{i-1}(c) p_{i-1}(c')}{1- \hp },
    \end{align*}
    where we used $p_{i-1}(c'') \leq \hp$ since $c'' \notin B_{i-1}(v_i)$.
    Because $c' \notin B_{i-1}(v_j)$ and $\hp < 1/2$, the above quantity is at most
    \[p_{i-1}(c) p_{i-1}(c')(1 + 2\hp) \leq p_{i-1}(c) p_{i-1}(c') + 2\hp^2p_{i-1}(c),\]
    as desired.
\end{itemize}
This covers all cases and completes the proof of the claim.
\end{claimproof}

By repeatedly applying Claim~\ref{claim: energy}, we obtain
\[
\E[\bone{c' \notin B_{j-1}(v_j)}p_{j-1}(c)p_{j-1}(c')]\leq S_0 = p_0(c)p_0(c') + 2\hp^2p_{0}(c)|N_b(c)\cap N_b(c')|.
\]
Substituting this bound into~\eqref{eq: Q}, we conclude that
\begin{align}
    \E[Q_{j-1}(v_j,v_k)]&=\sum_{c\in L(v_k)}\sum_{c'\in N^{(j)}(c)} \E[\bone{c' \notin B_{j-1}(v_j)}p_{j-1}(c)p_{j-1}(c')]\\
    &\leq\sum_{c\in L(v_k)}\sum_{c'\in N^{(j)}(c)} \left(p_0(c)p_0(c')+ 2\hp^2p_{0}(c)|N_b(c)\cap N_b(c')|\right)\\
    &=\sum_{c\in L(v_k)}\sum_{c'\in N^{(j)}(c)} \left(\alpha^2 + 2\hp^2\alpha|N_b(c)\cap N_b(c')|\right),
\end{align}
as desired.
\end{proof}

Finally, we bound the expected entropy.

\begin{lemma}\label{lemma: H}
    \(\E[H_{k-1}(v_k)]\geq H_0(v_k)-\frac{1}{1-\hp}\alpha q\left(\alpha d + \frac{2d^2\hp^2}{f}\right)\)
\end{lemma}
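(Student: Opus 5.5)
We track the one-step change of the entropy of $v_k$. Write $H_i(v_k)-H_{i-1}(v_k)$ as a sum over $c\in L(v_k)$ and telescope:
\[
H_0(v_k)-\E[H_{k-1}(v_k)]=\sum_{i=1}^{k-1}\E\bigl[H_{i-1}(v_k)-\E[H_i(v_k)\mid\cF_{i-1}]\bigr].
\]
Only steps $i$ with $v_i\in N_b(v_k)$ contribute. At any other step, step~\ref{step: not neighbor} leaves every $p(c)$, $c\in L(v_k)$, unchanged. This yields at most $d$ nonzero terms.

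Fix such an $i=j$ and a color $c\in L(v_k)\setminus B_{j-1}(v_k)$ whose unique neighbor $c'\in N^{(j)}(c)$ lies outside $B_{j-1}(v_j)$. Write $x=p_{j-1}(c)$ and $y=p_{j-1}(c')\le\hp$. Then $p_j(c)=0$ with probability $y$, and $p_j(c)=x/(1-y)$ otherwise. Using $0\log0=0$,
\[
-x\log x-\E[-p_j(c)\log p_j(c)\mid\cF_{j-1}]=-x\log x+x\log\frac{x}{1-y}=-x\log(1-y)\le\frac{xy}{1-y}\le\frac{xy}{1-\hp}.
\]
All other colors of $L(v_k)$ keep their weight at this step, so they contribute nothing. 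Summing over $c$ gives
\[
H_{j-1}(v_k)-\E[H_j(v_k)\mid\cF_{j-1}]\le\frac{1}{1-\hp}Q_{j-1}(v_j,v_k).
\]
Here the indicator in $Q_{j-1}$ exactly encodes $c'\notin B_{j-1}(v_j)$. Dropping the restriction $c\notin B_{j-1}(v_k)$ only enlarges the sum, since all terms are nonnegative.

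Taking expectations and applying Lemma~\ref{lemma: Q} gives
\[
H_0(v_k)-\E[H_{k-1}(v_k)]\le\frac{1}{1-\hp}\sum_{v_j\in N_b(v_k)}\sum_{c\in L(v_k)}\sum_{c'\in N^{(j)}(c)}\bigl(\alpha^2+2\hp^2\alpha|N_b(c)\cap N_b(c')|\bigr).
\]
We bound the two terms separately.

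\textbf{First term.} Each $c$ has at most one neighbor in each $L(v_j)$, and there are at most $d$ back-neighbors. So the first term is at most $\alpha^2 qd$.

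\textbf{Second term.} Interchange the order of summation so that the outer sum is over $c\in L(v_k)$. For fixed $c$, the quantity $\sum_j\sum_{c'\in N^{(j)}(c)}|N_b(c)\cap N_b(c')|$ counts pairs $(c',c'')$ of neighbors of $c$ with $c''\in N_b(c')$. Both colors lie in back-neighbor lists of $v_k$, and they are adjacent in $H$. By~\ref{dp:matching}, each such pair forces $L^{-1}(c')L^{-1}(c'')$ to be an edge of $G[N_b(v_k)]$. Conversely, each edge of $G[N_b(v_k)]$ yields at most one such pair for fixed $c$, because $c$ has at most one neighbor in each list. By left $\frac{d^2}{f}$-local sparsity, the inner count is therefore at most $d^2/f$. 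Summing over the $q$ colors $c$ bounds the second term by $2\hp^2\alpha q d^2/f$.

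Combining the two bounds gives the lemma. The main care needed is this last counting step: one must check that the orientation $c''\in N_b(c')$ counts each edge of $G[N_b(v_k)]$ once rather than twice. The entropy-increment computation itself is routine once we note that bad colors are frozen, so they contribute $0$.
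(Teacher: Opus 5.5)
Your proposal is correct and follows essentially the same route as the paper: the one-step bound $-x\log(1-y)\le xy/(1-y)\le xy/(1-\hp)$ is summed and telescoped into $\frac{1}{1-\hp}\sum_j \E[Q_{j-1}(v_j,v_k)]$, and then Lemma~\ref{lemma: Q} is combined with $|N_b(c)|\le d$ and $|E(H[N_b(c)])|\le d^2/f$. The only difference is that you prove the final counting step directly, whereas the paper cites it as standard. Your argument counts each edge of $H[N_b(c)]$ once via the orientation $c''\in N_b(c')$, and injects these edges into $E(G[N_b(v_k)])$ using~\ref{dp:matching} and the fact that $c$ has at most one neighbor per list.
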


\begin{proof}\stepcounter{ForClaims}\renewcommand{\theForClaims}{\ref*{lemma: H}}
Recall that 
\begin{equation}
\E[H_{k-1}(v_k)]=\E\left[-\sum_{c\in L(v_k)}p_{k-1}(c)\log p_{k-1}(c)\right] =-\sum_{c\in L(v_k)}\E[p_{k-1}(c)\log p_{k-1}(c)]. \label{eq: H}
\end{equation}
The following claim is central to the argument:
\begin{claim}\label{claim: entropy}
    Fix \(c\in L(v_k)\). For \(1 \leq i < k\), we have 
    \[\E[p_i(c)\log p_i(c)\mid \cF_{i-1}]\leq p_{i-1}(c)\log p_{i-1}(c)+\sum_{c' \in N^{(i)}(c)}\bone{c' \notin B_{i-1}(v_i)}\frac{p_{i-1}(c)p_{i-1}(c')}{1-\hp }.\]
\end{claim}
\begin{claimproof}
The claim holds trivially if \(N^{(i)}(c) \setminus B_{i-1}(v_i)=\emptyset\), so we may assume there exists a unique \(c'\in N^{(i)}(c)\setminus B_{i-1}(v_i)\). Defining \(\varphi(x)=x\log x\) with the convention \(\varphi(0)=0\log 0 =0\), we have
\begin{align*}
    \E[\varphi(p_i(c))\mid \cF_{i-1}]&= p_{i-1}(c')\varphi(0) + (1-p_{i-1}(c'))\varphi\left(\frac{p_{i-1}(c)}{1-p_{i-1}(c')}\right)\\
    &= p_{i-1}(c)\log\left(\frac{p_{i-1}(c)}{1-p_{i-1}(c')}\right)\\
    &=\varphi(p_{i-1}(c))+ p_{i-1}(c)\log\left(\frac{1}{1-p_{i-1}(c')}\right).
\end{align*}
Since \(c' \notin B_{i-1}\) (meaning \(p_{i-1}(c')\leq\hp \)), applying the inequality \(-\log(1-x)\leq \frac{x}{1-x}\) for \(0\leq x<1\) yields 
\[
p_{i-1}(c)\log\left(\frac{1}{1-p_{i-1}(c')}\right)\leq \frac{p_{i-1}(c)p_{i-1}(c')}{1-p_{i-1}(c')}\leq 
\frac{p_{i-1}(c)p_{i-1}(c')}{1-\hp },\]
which completes the proof of the claim.
\end{claimproof}

Repeatedly applying Claim~\ref{claim: entropy}, we obtain
\begin{align*}
  \E[p_{k-1}(c)\log p_{k-1}(c)]&\leq p_0(c)\log p_0(c) + \frac{1}{1 - \hp}\sum_{j=1}^{k-1} \sum_{c' \in N^{(j)}(c)}\E[\bone{c' \notin B_{j-1}(v_j)}p_{j-1}(c)p_{j-1}(c')]. 
\end{align*}
Substituting this into~\eqref{eq: H} gives
\begin{align*}
    \E[H_{k-1}(v_k)]&=-\sum_{c\in L(v_k)}\E[p_{k-1}(c)\log p_{k-1}(c)] \\
    &\geq -\sum_{c\in L(v_k)}\left(p_0(c)\log p_0(c) + \frac{1}{1 - \hp}\sum_{j=1}^{k-1} \sum_{c' \in N^{(j)}(c)}\E[\bone{c' \notin B_{j-1}(v_j)}p_{j-1}(c)p_{j-1}(c')]\right)\\
    &=H_0(v_k)-\frac{1}{1-\hp }\sum_{c\in L(v_k)}\sum_{j=1}^{k-1} \sum_{c' \in N^{(j)}(c)}\E[\bone{c' \notin B_{j-1}(v_j)}p_{j-1}(c)p_{j-1}(c')]\\
    &= H_0(v_k)-\frac{1}{1-\hp }\sum_{j=1}^{k-1}\E[Q_{j-1}(v_j, v_k)].
\end{align*}
Applying Lemma~\ref{lemma: Q}, we have 
\begin{align*}
\E[H_{k-1}(v_k)]&\geq H_0(v_k)-\frac{1}{1-\hp }\sum_{j=1}^{k-1}\sum_{c\in L(v_k)}\sum_{c' \in N^{(j)}(c)}\left(\alpha^2 + 2\hp^2\alpha|N_b(c)\cap N_b(c')|\right) \\
&= H_0(v_k)-\frac{1}{1-\hp }\left(\sum_{c\in L(v_k)}\left(\alpha^2|N_b(c)| + 2\hp^2\alpha|E(H[N_b(c)])|\right)\right).
\end{align*}
Notice that $|N_b(c)| \leq |N_b(v_k)| \leq d$.
Furthermore, because $G$ is left \(\frac{d^2}{f}\)-locally-sparse, standard arguments show that $|E(H[N_b(c)])| \leq \frac{d^2}{f}$ (see, e.g., \cite[Proposition~11]{anderson2024coloring}).
It follows that
\[\E[H_{k-1}(v_k)] \geq H_0(v_k)-\frac{1}{1-\hp}\alpha q\left(\alpha d + \frac{2d^2\hp^2}{f}\right),\]
as claimed.
\end{proof}

With the above lemmas in hand, let us now show that the contribution of bad vertices to~\eqref{eq: main} is small.

\begin{lemma}\label{lemma : B}
    \( \E\left[\sum_{c\in B_{k-1}(v_k)}p_{k-1}(c)\right] \le \alpha q/2\).
\end{lemma}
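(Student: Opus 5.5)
The plan is to turn the entropy lower bound of Lemma~\ref{lemma: H} into an upper bound on the bad mass. Write $p = p_{k-1}$, $P = P_{k-1}(v_k)$, and $X = \sum_{c \in B_{k-1}(v_k)} p(c)$ for the quantity we want to control. We compare each term of the entropy with the initial weight $\alpha$:
\[
-p(c)\log p(c) = p(c)\log\frac{1}{\alpha} - p(c)\log\frac{p(c)}{\alpha}.
\]
\begin{itemize}
    \item For a good color ($p(c) \le \hp$), the inequality $\log x \le x - 1$ gives $-p(c)\log\frac{p(c)}{\alpha} \le \alpha - p(c)$.
    \item For a bad color ($p(c) > \hp$), we have $-p(c)\log\frac{p(c)}{\alpha} \le -p(c)\log\frac{\hp}{\alpha}$.
\end{itemize}
Summing over $L(v_k)$, and using that $\sum_{c \notin B_{k-1}(v_k)} (\alpha - p(c)) \le \alpha q - (P - X)$, we get the deterministic inequality
\[
H_{k-1}(v_k) \le P\log\frac{1}{\alpha} + \alpha q - P + X - X\log\frac{\hp}{\alpha}.
\]

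Next we take expectations. Lemma~\ref{lemma: P} gives $\E[P] = \alpha q$, and by definition $H_0(v_k) = \alpha q\log(1/\alpha)$. This yields
\[
\E[X]\left(\log\frac{\hp}{\alpha} - 1\right) \le H_0(v_k) - \E[H_{k-1}(v_k)].
\]
Lemma~\ref{lemma: H} bounds the right-hand side by $\frac{1}{1-\hp}\alpha q\left(\alpha d + \frac{2d^2\hp^2}{f}\right)$. It therefore suffices to check the parameter inequality
\[
\frac{1}{1-\hp}\left(\alpha d + 2 f^{-\eps/20}\right) \le \frac{1}{2}\left(\log\frac{\hp}{\alpha} - 1\right),
\]
where we have used $d^2\hp^2/f = f^{-\eps/20}$.

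The main obstacle is this final numerical comparison, where the constant $8$ and the exponent in $\hp$ must fit together. On the left, $\alpha d = \frac{2\log f}{(8+\eps)(1-\eps/20)}$ and $2f^{-\eps/20} = o(1)$. Also $\hp \le f^{-\eps/40}$ (because $f \le d^2 + 1$), so $\frac{1}{1-\hp} = 1 + o(1)$. On the right, $\hp/\alpha = \Theta\bigl(f^{1/2 - \eps/40}/\log f\bigr)$, so $\log\frac{\hp}{\alpha} - 1 = (1/2 - \eps/40)\log f - O(\log\log f)$.

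Comparing the leading coefficients of $\log f$, the inequality holds provided
\[
\frac{2}{(8+\eps)(1-\eps/20)} < \frac{1}{2}\left(\frac{1}{2} - \frac{\eps}{40}\right),
\]
which is equivalent to $8 < (8+\eps)(1-\eps/20)^2$. The right-hand side is approximately $8 + 0.2\eps$, and indeed $(8+\eps)(1-\eps/20)^2 - 8 = \eps\bigl(\tfrac15 - \tfrac{3\eps}{50} + \tfrac{\eps^2}{400}\bigr) > 0$ for all $\eps \in (0,1)$. This leaves a gap of order $\eps\log f$. That gap absorbs the $O(\log\log f)$ term, the $o(1)$ terms, and the factor $\frac{1}{1-\hp}$ once $f \ge C$ is large enough in terms of $\eps$. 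This gives $\E[X] \le \alpha q/2$.
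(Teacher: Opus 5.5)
Your argument is correct and follows the paper's route: you convert the entropy deficit from Lemma~\ref{lemma: H}, together with $\E[P_{k-1}(v_k)]=\alpha q$, into a bound on the bad mass by thresholding $\log(p/\alpha)$ at $\log(\hp/\alpha)$, and then check the numerics. The only difference is how good colors are handled. The paper uses the structural fact that every weight is either $0$ or at least $\alpha$, so $p\log(p/\alpha)\ge 0$; you use $\log x\le x-1$ instead, which needs no monotonicity of the weights and costs only a harmless $-1$ in the denominator. There is one trivial arithmetic slip: the $\eps^2$-coefficient in your expansion of $(8+\eps)(1-\eps/20)^2-8$ is $-2/25$, not $-3/50$, which does not affect positivity.
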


\begin{proof}\stepcounter{ForClaims}\renewcommand{\theForClaims}{\ref*{lemma : B}}
First, we have the following simplification: 
\begin{align*}
    \E\left[\sum_{c\in B_{k-1}(v_k)}p_{k-1}(c)\right] &= \sum_{c\in L(v_k)}\E[p_{k-1}(c)\bone{c\in B_{k-1}(v_k)}]\\
    &=\sum_{c\in L(v_k)}\frac{\E[p_{k-1}(c)\log(\hp /\alpha)\bone{p_{k-1}(c)>\hp }]}{\log(\hp /\alpha)}\\
    &\leq\sum_{c\in L(v_k)}\frac{\E[p_{k-1}(c)\log (p_{k-1}(c)/\alpha)\bone{p_{k-1}(c)>\hp }]}{\log(\hp /\alpha)}\\
    &\leq \sum_{c\in L(v_k)}\frac{\E[p_{k-1}(c)\log p_{k-1}(c)]}{\log(\hp /\alpha)}-\sum_{c\in L(v_k)}\frac{\E[p_{k-1}(c)\log\alpha]}{\log(\hp /\alpha)}.
\end{align*}
In the first inequality, we used that fact that $\log (p_{k-1}(c) /\alpha) \geq \log (\hp /\alpha)$ whenever $p_{k-1}(c) > \hp$.
In the second inequality we used the fact that $p_{k-1}(c) \log (p_{k-1}(c) /\alpha)$ is a nonnegative random variable.
Indeed, at each iteration, a weight either increases, remains the same, or is set to $0$ and so, if $p_{k-1}(c) \neq 0$, then $p_{k - 1}(c) \geq \alpha$.
Applying Lemmas~\ref{lemma: P} and~\ref{lemma: H} to the above expression, we obtain 
\begin{align*}
\E\left[\sum_{c\in B_{k-1}(v_k)}p_{k-1}(c)\right] &\leq -\dfrac{1}{\log (\hp /\alpha)}\E[H_{k-1}(v_k)]-\frac{1}{\log(\hp /\alpha)}q \alpha \log\alpha\\
&\leq  \frac{1}{\log (\hp /\alpha)}\left(-H_0(v_k) + \frac{1}{1-\hp}\alpha q\left(\alpha d + \frac{2d^2\hp^2}{f}\right)\right)+\frac{1}{\log(\hp /\alpha)}H_0(v_k)\\
&=\alpha q\frac{1}{(1-\hp )\log (\hp /\alpha)}\left(\alpha d + \frac{2d^2\hp^2}{f}\right).
\end{align*}
It now suffices to argue that
\[\frac{1}{(1-\hp )\log (\hp /\alpha)}\left(\alpha d + \frac{2d^2\hp^2}{f}\right) \leq \frac{1}{2}.\]
To this end, we recall that $\alpha = 2\eta/(1-\eps/20)$, and $\eta$ and $\hp$ are defined in \eqref{eq: eta hp}.
It follows that
\[\alpha d + \frac{2d^2\hp^2}{f} = \frac{\log f}{(1-\eps/20)(4+ \eps/2)} + 2f^{-\frac{\eps}{20}} \leq \frac{\log f}{(1-\eps/18)(4+ \eps/2)},\]
and
\[(1-\hp )\log (\hp /\alpha) \geq (1-\hp )\log (f^{\frac{1}{2} - \frac{\eps}{45}}) \geq \frac{(1 - \eps/18)\log f}{2}.\]
Combining the above yields
\[\frac{1}{(1-\hp )\log (\hp /\alpha)}\left(\alpha d + \frac{d^2\hp^2}{f}\right) \leq \frac{2}{(1 - \eps/18)\log f}\frac{\log f}{(1-\eps/18)(4+ \eps/2)} \leq \frac{1}{2},\]
as desired.
\end{proof}

Plugging the bounds from Lemmas~\ref{lemma: P} and~\ref{lemma : B} into \eqref{eq: main}, we obtain:
\[\E[|S(v_k)]|=\E\left[P_{k-1}(v_k)\right]-\E\left[\sum_{c \in B_{k-1}(v_j)}p_{k-1}(c)\right] \geq \frac{\alpha q}{2},\]
completing the proof of Lemma~\ref{lemma: main1}.

\subsection{Proof of Lemma~\ref{lemma: main2}}\label{subsection: conc S}
In this section, we prove Lemma~\ref{lemma: main2}, showing that the size of each set \(S(v_k)\) is highly concentrated around its expected value. For the reader’s convenience, we restate the lemma below.

\begin{lemma*}[Restatement of Lemma~\ref{lemma: main2}]
\(\pr[|S(v_k)|\leq (1- \eps/20)\alpha q/2]=\exp\left( -\Omega\left( \dfrac{\eps^2q\alpha^2}{n^{2n}}\right)  \right)\).  
\end{lemma*}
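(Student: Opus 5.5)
We apply Theorem~\ref{theo: tal} to $X \coloneqq |S(v_k)|$, together with the bound $\E[X] \ge \alpha q/2$ from Lemma~\ref{lemma: main1}.

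\textbf{Step 1: independent trials.} We realize the randomness of Algorithm~\ref{alg:dp-entropy} through independent uniform variables $U_c \in [0,1]$, one for each color $c \in V(H)$. At the moment $c$ is processed, we set $a_c = \bone{U_c \le p_{i-1}(c)}$. This gives exactly the law of the algorithm, since $p_{i-1}(c)$ is $\cF_{i-1}$-measurable. The output is then a deterministic function of the $s = nq$ independent trials $(U_c)_{c \in V(H)}$.

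\textbf{Step 2: bounded differences.} We show that changing a single $U_c$ changes $X$ by at most $\zeta = n^{n}$ (up to lower-order factors). Suppose $c \in L(v_i)$ and we change $U_c$. This can flip $a_c$. Flipping $a_c$ affects the weights of the front-neighbors of $c$ (at most one color in each later list) and the bad sets; in any case, the only colors directly affected lie in $N_f(c)$. We then argue inductively along the ordering.

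Let $D_j$ be the number of colors in $L(v_1) \cup \dots \cup L(v_j)$ whose decision $a_{c'}$ (or badness status) differs between the two runs. Any color in $L(v_j)$ whose weight or decision changes must have a neighbor in some earlier list whose status changed. Because each color has at most one neighbor in each list, a changed color in $L(v_l)$ can affect at most one color in $L(v_j)$. Hence the number of changed colors in $L(v_j)$ is at most $\sum_{l<j}(\text{changed in } L(v_l))$, which gives $D_j \le 2D_{j-1}$, and more crudely $D_j \le n D_{j-1}$. In either case the number of changed colors is at most $2^n \le n^n$, so in particular at most $n^n$ of them lie in $L(v_k)$, and $|X - X'| \le n^n$.

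\textbf{Step 3: conclusion and main obstacle.} Theorem~\ref{theo: tal} with $t = (\eps/20)\alpha q/2$, $\zeta = n^n$, and $s = nq$ gives
\[
\pr\bigl[X \le (1-\eps/20)\alpha q/2\bigr] \le \exp\left(-\frac{\eps^2\alpha^2 q^2}{1600\cdot 2\, n^{2n}\, nq}\right) = \exp\left(-\Omega\left(\frac{\eps^2 q\alpha^2}{n^{2n+1}}\right)\right).
\]
This matches Lemma~\ref{lemma: main2} as originally stated, with $n^{2n+1}$ in the denominator. The restated version with $n^{2n}$ is absorbed if $\zeta$ is taken to be $2^n$ instead.

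The delicate point is Step 2. One must check that a changed weight propagates to at most one color per later list, and that the change is confined to colors reachable through $H$-edges of the correspondence matchings. A changed weight of $c'$ can alter whether $c'$ is bad, and it can alter the transition $U_{c'} \le p(c')$; both effects are captured by counting $c'$ as ``changed.'' Since $H$ restricted to any two lists is a matching, the propagation tree branches at most once into each later list per changed color. The doubling recursion, or the crude $n^n$ bound, then follows.
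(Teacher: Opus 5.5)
Your proposal is correct and follows the paper's argument: realize the coin flips via independent uniforms, bound the effect of one trial by propagation along forward $H$-edges (using that a color has at most one neighbor in each list), and apply Theorem~\ref{theo: tal} with $s = nq$. Your doubling count $D_j \le 2D_{j-1}$ in fact gives $\zeta \le 2^n$, which is what justifies the $n^{2n}$ in this restatement, whereas the paper's cruder $\zeta = n^n$ only yields the $n^{2n+1}$ of the original Lemma~\ref{lemma: main2}. One thing to tidy: your initial definition of $D_j$ counts only colors whose decision or badness changed. ``Changed'' must also include a change in a color's weight at the time it is processed, since that weight enters the divisor $1 - p_{l-1}(\cdot)$ in the update of its front-neighbors. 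You say this at the end, but the definition of $D_j$ should reflect it.
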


As outlined in the proof overview, we aim to establish the concentration of \(|S(v_k)|\) using Theorem~\ref{theo: tal}. To this end, we define a collection of auxiliary random variables: for each \(c\in V(H)\), let \(a'_c\) be a independent random variable drawn uniformly from the interval \([0,1]\). We modify Algorithm~\ref{alg:dp-entropy} so that whenever \(c\in L(v_i)\), we set \(a_c= \bone{a'_c\leq p_{i-1}(c)}\). The distribution of this modified procedure is identical to that of the original algorithm. Since \(\cH\) is a \(q\)-fold cover of an \(n\)-vertex graph, \(|V(H)|=nq\); thus, \(|S(v_k)|\) is a function of the \(nq\) independent random variables \(\{a'_c\,:\,c\in V(H)\}\). 

We now bound the effect of altering a single trial $a'_c$. We claim that changing $a_c'$ affects the event $\bone{c' \in S}$ for at most $n^n$ colors $c'$.
Indeed, $a_c'$ can alter the outcome $\bone{c' \in S}$ if and only if there exists a path $(c = c_0,\, c_1, \ldots,\, c_t = c')$ in $H$ such that $c_{i+1} \in N_f(c_i)$ for each $0 \leq i < t$.
Because $|N_f(c_i)| \leq |N_f(L^{-1}(c_i))| \leq n$ and such ``strictly forward'' paths have length at most $n$, the claim follows.

By Lemma~\ref{lemma: main1}, \(\mathbb E[|S(v_k)|]\ge q\alpha/2.\)
Setting $t=\varepsilon q\alpha/16$, we obtain
\[
\left(1-\frac{\varepsilon}{8}\right)\frac{q\alpha}{2}
\leq \mathbb E[|S(v_k)|]-t.
\]
Therefore, applying Theorem~\ref{theo: tal} with parameters $s = nq$, $\xi = n^n$, and $t=\varepsilon q\alpha/20$ yields
\[
\pr\left[
|S(v_k)|\le
\left(1-\frac{\varepsilon}{8}\right)\frac{q\alpha}{2}
\right]
\le
\exp\left(
-\frac{\varepsilon^2q\alpha^2}{512\,n^{2n+1}}
\right)
=
\exp\left(
-\Omega\left(\frac{\varepsilon^2q\alpha^2}{n^{2n + 1}}\right)
\right),
\]
which completes the proof of Lemma~\ref{lemma: main2}.

\section{Concluding Remarks}\label{sec:concluding}

In this paper, we generalize Theorem~\ref{theo: martinsson steiner} in two key directions.
First, we extend the bound to the local sparsity setting while avoiding the large constant-factor loss inherent to the Alon--Krivelevich--Sudakov approach~\cite{AKSConjecture}.
Second, we establish this bound within the broader framework of DP-coloring, resolving a problem previously open even for triangle-free graphs.
Additionally, we provide a matching lower bound construction showing that both upper bounds are sharp up to their leading constants.
We conclude by highlighting several natural directions for future research.

In~\cite{martinsson2025random}, Martinsson and Steiner conjectured that the leading constant $4$ in Theorem~\ref{theo: martinsson steiner} can be reduced to $1$.
Allen, Noel, and the first author~\cite{high_girth} confirmed this conjecture under the stronger assumption that $G$ has girth at least $5$, providing a construction to show that this bound is tight (Theorem~\ref{theo: lower bound degen girth}).
For large $f$, this bound suggests that $2$ may be the correct constant in our setting.
Indeed, our lower bound construction for small $f$ has a very atypical neighborhood structure (disjoint union of cliques) which suggests it may not be optimal.
We conjecture that our lower bound can be improved to $2$ and that the analogous upper bound holds as well.

\begin{conjecture}\label{conj: local sparsity ub = lb}
    There exists a constant $C > 0$ such that the following holds for all sufficiently large $d$ and all $C \le f \le d^2 + 1$:
    \begin{enumerate}
        \item Every $d$-degenerate graph $G = (V, E)$ that admits a left $\frac{d^2}{f}$-locally-sparse degeneracy ordering satisfies $\chi_f(G) \le (2 + o(1))\frac{d}{\log f}$.
        \item There exists a $d$-degenerate graph $G = (V, E)$ that admits a left $\frac{d^2}{f}$-locally-sparse degeneracy ordering satisfying $\chi_f(G) \ge (2 - o(1))\frac{d}{\log f}$.
    \end{enumerate}
\end{conjecture}

For $K_{1, t, t}$-free graphs $G$, Anderson, Bernshteyn, and the first author showed that $\chi^{\mathrm{DP}}(G) \le (4 + o(1))\frac{\Delta}{\log \Delta}$, where $\Delta = \Delta(G)$ denotes the maximum degree.
Notably, this leading constant is independent of $t$.
It is natural to ask whether an analogous bound holds for fractional DP-coloring when maximum degree is replaced by degeneracy.

\begin{question}
    Does there exist an absolute constant $C > 0$ such that $\chiDP(G) \le C\frac{d}{\log d}$ for every $d$-degenerate $K_{1, t, t}$-free graph $G$?
\end{question}

A similar question arises for $K_{t, t, t}$-free graphs, where no non-trivial bound beyond the trivial estimate $\chi^{\mathrm{DP}}(G) \le \Delta(G) + 1$ is currently known.
Determining whether $\chi^{\mathrm{DP}}(G) = O_t\left(\frac{\Delta}{\log \Delta}\right)$ for all $K_{t, t, t}$-free graphs $G$ remains an open problem.

\subsection*{Acknowledgements}

We are grateful to Haoran Luo for helpful feedback on an earlier draft of this manuscript.

\printbibliography

\end{document}